\def\LL{\cc{L}}
\def\pen{\operatorname{pen}}
\def\pnn{\mathfrak{b}}
\def\penInt{\mathfrak{P}}
\def\penH{\mathfrak{H}}
\def\fu{f}
\def\tv{\bb{\upsilon}}
\def\Tv{\varUpsilon}
\def\tvs{\tv_{0}}
\def\td{\tv^{\circ}}
\def\tc{\tv^{\prime}}
\def\tdc{\tv^{\sharp}}
\def\normc{\mathfrak{d}}
\def\UU{\cc{Y}}
\def\ExpL{\mathfrak{D}}
\def\ExpM{\cc{M}}
\def\VV{H}
\def\ranking{\cc{R}}
\def\penb{\varkappa}
\def\Yn{\bb{Y}}
\def\eqdef{\stackrel{\operatorname{def}}{=}}
\def\mes{\pi}
\def\srho{s}
\def\lambdab{{\lambda}^{*}}
\def\fis{\mathfrak{a}}
\def\fiss{\fis_{1}}
\def\uv{\bb{u}}
\def\mY{b}
\def\Lexpm{\mathfrak{S}}
\def\expzeta{\mathfrak{n}}
\def\reps{\epsilon}
\def\repsc{\reps_{0}}
\def\lmgf{\mathfrak{m}}
\def\B{\cc{B}}
\def\BB{\B'}
\def\Ns{\mathbb{N}}
\def\rr{\mathfrak{r}}
\def\mrho{\varrho}
\def\Lmgf{\mathfrak{M}}
\def\ex{\mathrm{e}}
\def\E{I\!\!E}
\def\P{I\!\!P}
\def\entrl{\mathbb{Q}}
\def\entr{\entrl}
\def\thetav{\bb{\theta}}
\def\thetavs{\bb{\theta}_{0}}
\def\thetavc{\thetav'}
\def\thetavd{\thetav^{\circ}}
\def\gammav{\bb{\gamma}}
\def\thetas{\theta_{0}}
\def\kappa{\varkappa}
\def\zz{\mathfrak{z}}
\def\Crlp{\mathfrak{R}}
\def\Crlq{\mathfrak{Q}}
\newcommand{\ifims}[2]{#1}   % latex version
\def\thetitle{A penalized exponential risk bound in parametric estimation}
\def\theruntitle{A penalized exponential risk bound in parametric estimation}
\def\theabstract{
The paper offers a novel unified approach to studying the accuracy of parameter
estimation by the quasi likelihood method.
Important features of the approach are:
(1) The underlying model {is not assumed to be parametric}.
(2) No conditions on parameter identifiability are required.
The parameter set can be unbounded.
(3) The model assumptions are quite general and there is no specific structural
assumptions like independence or weak dependence of observations.
The imposed conditions on the model are very mild and can be easily checked in specific
applications.
(4) The established risk bounds are {nonasymptotic} and valid for large,
moderate and small samples.
(5) The main result is the  concentration property of the quasi MLE
giving an nonasymptotic exponential bound for the probability that the
considered estimate deviates out of a small neighborhood of the ``true'' point.

In standard situations under mild regularity conditions,
the usual consistency and rate results can be easily obtained as
corollaries from the established risk bounds.
The approach and the results are illustrated on the example of generalized linear
and single-index models.
}
\def\kwdp{62F10}
\def\kwds{62J12,62F25,62H12}
\def\thekeywords{exponential risk bounds, rate function,
quasi maximum likelihood}
\renewcommand{\Gamma}{\varGamma}
\renewcommand{\Pi}{\varPi}
\renewcommand{\Sigma}{\varSigma}
\renewcommand{\Delta}{\varDelta}
\renewcommand{\Lambda}{\varLambda}
\renewcommand{\Psi}{\varPsi}
\renewcommand{\Phi}{\varPhi}
\renewcommand{\Theta}{\varTheta}
\renewcommand{\Omega}{\varOmega}
\renewcommand{\Xi}{\varXi}
\renewcommand{\Upsilon}{\varUpsilon}
\renewcommand{\(}{$\,}
\renewcommand{\)}{\,$}
\newcommand{\cc}[1]{\mathscr{#1}}
\newcommand{\bb}[1]{\boldsymbol{#1}}
\renewcommand{\tilde}[1]{\widetilde{#1}}
\newcommand{\nn}{\nonumber \\}
\newcommand\Var{\operatorname{Var}}
\newcommand{\argmax}{\operatornamewithlimits{argmax}}
\def\argmin{\operatornamewithlimits{argmin}}
\def\E{\boldsymbol{E}}
\def\P{\boldsymbol{P}}
\def\R{I\!\!R}
\def\T{\top}
\newcommand{\keywords}[1]{\par\noindent\emph{Keywords:} #1 \\}
\renewenvironment{abstract}
    {\centerline{\textbf{Abstract}}\bigskip
      \begin{center}
       \begin{minipage}{11cm}
        \begin{small}
    }
    {   \end{small}
       \end{minipage}
      \end{center}
     \bigskip
    }
\numberwithin{equation}{section}
\numberwithin{figure}{section}
\newcounter{example}[section]
\numberwithin{example}{section}
\newcounter{remark}[section]
\numberwithin{remark}{section}
\newtheorem{theorem}{Theorem}[section]
\newtheorem{lemma}[theorem]{Lemma}
\newtheorem{corollary}[theorem]{Corollary}
\newtheorem{exmp}[example]{Example}
\newtheorem{rmrk}[remark]{Remark}
\newenvironment{example}{\begin{exmp}\rm}{\end{exmp}}
\newenvironment{remark}{\begin{rmrk}\rm}{\end{rmrk}}
\newcommand{\citeasnoun}[1]{\cite{#1}}
\begin{document}
\thispagestyle{empty}
\ifims{
\title{\thetitle}
\author{
 Spokoiny, Vladimir
%\thanks{This is the author to whom the correspondence should be sent}
\\[5.pt]
{
Weierstrass-Institute,} \\
{ Mohrenstr. 39,} 
10117 Berlin, Germany \\
\texttt{ spokoiny@wias-berlin.de} }
\maketitle
\pagestyle{myheadings} 
\markboth
 {\hfill {\sc \small \theruntitle} \hfill}
 {\hfill {\sc \small spokoiny, v.} \hfill}
\begin{abstract}
\theabstract
\end{abstract}

%\subjclass{Primary \kwdp. Secondary \kwds}
\noindent \emph{JEL codes:} C13,C22.
\keywords{\thekeywords}
} % end front latex 
{ % front ims
\begin{frontmatter}
\title{\thetitle}

% "Title of the paper"

\runtitle{\theruntitle}

\begin{aug}
\author{\fnms{ Vladimir} \snm{Spokoiny}\ead[label=e1]{spokoiny@wias-berlin.de}}
\address{Weierstrass-Institute and\\
  Humboldt University Berlin, \\
 Mohrenstr. 39, 10117 Berlin, Germany \\
 \printead{e1}}
 \end{aug}

 \runauthor{spokoiny, v. }
\affiliation{Weierstrass-Institute and  Humboldt University Berlin}

\begin{abstract}
\theabstract
\end{abstract}

\begin{keyword}[class=AMS]
\kwd[Primary ]{\kwdp}
\kwd[; secondary ]{\kwds}
\end{keyword}

\begin{keyword}
\kwd{\thekeywords}
\end{keyword}

\end{frontmatter}
} % end front ims

%% main exp-bound paper
\def\zv{\bb{z}}

\section{Introduction}
One of the most popular approaches in statistics is based on the parametric assumption
that the distribution \( \P \) of the observed data
\( \Yn \) belongs to a given parametric family \( (\P_{\thetav},\, \thetav \in \Theta) \), where \( \Theta \)
is a subset  in a finite dimensional space \(  \R^{p} \). In this
situation, the statistical inference about \( \P \) is reduced to recovering  \( \thetav \).
The standard likelihood  principle suggests to estimate  \( \thetav \)
by maximizing the corresponding log-likelihood function \( L(\Yn,\thetav) \).
The classical parametric statistical theory focuses mostly  on asymptotic properties of the
difference between  \( \tilde{\thetav}\) and the true value \( \thetavs \) as
the sample size \( n \) tends to infinity. 
There is a vast literature on this issue. 
We only mention the book by \citeasnoun{IH1981}, which provides a
comprehensive study of asymptotic properties of maximum likelihood and Bayesian
estimators.
The related analysis is effectively based on the Taylor expansion of the likelihood 
function near the true point under the assumption that the considered estimate 
well concentrates in a small (root-n) neighborhood of this point. 
In the contrary, there is only few results which establish this desired 
concentration property. \citeasnoun{IH1981}, Section 1.5, presents some 
exponential concentration bounds in the i.i.d. parametric case. 
%Typical results claim that the maximum
%likelihood and Bayes estimators are asymptotically optimal under certain regularity
%conditions.
Large deviation results about minimum  contrast estimators can be found in
\citeasnoun{J1998} and
\citeasnoun{sieddzh1987}, 
while subtle small sample size  properties of these estimators are presented in
\citeasnoun{F1982} and
\citeasnoun{F1990}.
This paper aims at studying the concentration properties of a general 
parametric estimate. The main result describes some concentration sets for the 
considered estimate and establishes an exponential bound for deviating of 
the estimate out of such sets. 

In the modern statistical literature there is a number of papers considering 
maximum likelihood or more generally
minimum contrast estimators in a general i.i.d.
situation, when the parameter set \(\Theta\) is a subset of some functional space. We
mention the papers
\citeasnoun{vdG1993},
\citeasnoun{BiMa1993},
\citeasnoun{BiMa1998},
\citeasnoun{Bi2006}
and references therein.
The studies mostly focused on the concentration properties of the maximum over
\( \thetav \in \Theta \) of the log-likelihood \( L(\Yn,\thetav) \)
rather on the properties of the estimator \( \tilde{\thetav} \)
which is the point of maximum of \( L(\Yn,\thetav) \).
The established results are based on deep probabilistic facts from
the empirical process theory (see e.g.
\citeasnoun{Ta1996},
\citeasnoun{VW1996},
\citeasnoun{BuLuMa2003}).
Our approach is similar in the sense that 
the analysis also focuses on the
properties of the maximum of \( L(\Yn,\thetav) \) over \( \thetav \in \Theta \).
However, we do not assume any specific structure of the model.
In particular, we do not assume independent
observations and thus, cannot apply the methods from the empirical process theory.

The aim of this paper is to offer a general and unified approach to statistical
estimation problem which delivers meaningful and informative results in a
general framework under mild regularity assumptions.
An important issue of the proposed approach is that it allows to go beyond the
parametric case, that is, the most of results and conclusions continue to apply
even if the parametric assumption is not precisely fulfilled.
Then the target of estimation can be viewed as the best parametric fit.
Some other important features of the proposed approach are that
 the established risk bounds are nonasymptotic and equally apply to large, moderate and
small samples and that the results describe nonasymptotic confidence and concentration sets in terms of
quasi log-likelihood rather than the accuracy of point estimation.
In the most of examples, the usual consistency and rate results can be easily obtained as
corollaries from the established risk bounds.
The results are obtained under very mild conditions which are easy to verify
in particular applications. There is no specific assumptions on the structure
of the data like independence or weak dependence of observations, the parameter
set can be unbounded. Another interesting feature of the proposed approach that
it does not require any identifiability conditions. 
Informally, one can say that 
whatever the quasi likelihood or contrast is, the corresponding estimate 
belongs with a dominating probability to the corresponding concentration set.
Examples show that the resulting concentration sets are of right magnitude, 
in typical situations this is a root-n vicinity of the true point.

Now we specify the considered set-up.
Let \( \Yn \) stand for the observed data.
For notational simplicity we assume that \( \Yn \) is a vector in \( \R^{n} \).
By \( \P \) we denote the measure describing the distribution of the whole
sample \( \Yn \).
The parametric approach discussed below allows to reduce the whole description
of the model to a few parameters which have to be estimated from the data.
Let \( \bigl( \P_{\thetav}, \thetav \in \Theta \bigr) \) be a given parametric
family of measures on \( \R^{n} \). The parametric assumption means simply that
\( \P = \P_{\thetavs} \) for some \( \thetavs \in \Theta \).
The parameter vector
\( \thetavs \) can be estimated using the maximum likelihood (MLE) approach.
Let \( L(\Yn,\thetav) \) be
the log-likelihood for the considered parametric model:
\( L(\Yn,\thetav) = \log \frac{d\P_{\thetav}}{d\P_{0}}(\Yn) \), where \( \P_{0} \)
is any dominating measure for the family \( (\P_{\thetav}) \).
The MLE estimate \( \tilde{\thetav} \) of the parameter
\( \thetavs \) is given by maximizing the log-likelihood \( L(\thetav) \):
\begin{eqnarray}
\label{hatthetaI}
    \tilde{\thetav} = \argmax_{\thetav \in \Theta} L(\Yn,\thetav).
\end{eqnarray}
Note that the value of the estimate will not be changed if the process 
\( L(\Yn,\thetav) \) is multiplied by any positive constant \( \mu \).

The quasi maximum likelihood approach admits that 
the underlying distribution \( \P \) does not belong to the family \( (\P_{\thetav}) \).
The estimate \( \tilde{\thetav} \) from (\ref{hatthetaI}) is still meaningful
and it becomes the \emph{quasi} MLE.
Later we show that \( \tilde{\thetav} \) estimate the value \( \thetavs \)
defined by maximizing the expected value of \( L(\Yn,\thetav) \):
\begin{eqnarray*}
    \thetavs
    \eqdef
    \argmax_{\thetav \in \Theta} \E L(\Yn,\thetav)
\end{eqnarray*}
which is the true value in the parametric situation and
can be viewed as the parameter of the best parametric fit in the general case.

Note that the presented set-up is quite general and the most of statistical
estimation procedures can be represented as quasi maximum likelihood for a
properly selected parametric family.
In particular, popular least squares, least absolute deviations,
M-estimates can be represented as quasi MLE.

The set-up of this paper is even more general. Namely, we consider a general 
estimate \( \tilde{\thetav} \) defined by maximizing a random field 
\( \LL(\thetav) \). 
The basic example we have in mind is the scaled quasi log-likelihood 
\( \LL(\thetav) = \mu L(\Yn,\thetav) \) for some \( \mu > 0 \). 
In some cases, especially if the parameter set is unbounded, the scaling factor 
\( \mu \) can also be taken depending on \( \thetav \), that is,
\( \LL(\thetav) = \mu(\thetav) L(\Yn,\thetav) \).
We focus on the properties of the process
\( \LL(\thetav) \) as a function of the parameter \( \thetav \). Therefore,
we suppress the argument \( \Yn \) there. 
One has to keep in mind that \( \LL(\thetav) \) is random
and depends on the observed data \( \Yn \).
The study focuses on the concentration properties of the estimate 
\( \tilde{\thetav} \) which is defined by maximization of the random process
\( \LL(\thetav) \). 
Let 
\begin{eqnarray*}
    \thetavs = \argmax_{\thetav} \E \LL(\thetav) .
\end{eqnarray*}
We also define \( \LL(\thetav,\thetavs) = \LL(\thetav) - \LL(\thetavs) \).
The aim of our study is to bound the value of the quasi maximum likelihood
\(
    \LL(\tilde{\thetav},\thetavs)
    =
    \max_{\thetav} \LL(\thetav,\thetavs)
\).
The basic assumption imposed on the process \( \LL(\thetav) \) is that the
difference \( \LL(\thetav,\thetavs) = \LL(\thetav) - \LL(\thetavs) \) has bounded
exponential moments for every \( \thetav \).
Our primary goal is to bound the supremum of such differences, or more
precisely, to establish an exponential bound for the value
\( \LL(\tilde{\thetav},\thetavs) \).
The standard approach of empirical process theory is to consider separately the
mean and the centered stochastic deviations of the process \( \LL(\thetav) \).
Here a slightly different standardization of the process \( \LL(\thetav) \) is used.
Assume that %for some \( \mu > 0 \) 
the exponential moment for
\( \LL(\thetav,\thetavs) \) is finite for all \( \thetav \).
This enables us to define for each \( \thetav \)
the \emph{rate function} \( \Lmgf(\thetav,\thetavs) \) which ensures the identity
\begin{eqnarray*}
    \E \exp\bigl\{ \LL(\thetav,\thetavs) + \Lmgf(\thetav,\thetavs) \bigr\}
    =
    1.
\end{eqnarray*}
This means that the process \( \LL(\thetav,\thetavs) + \Lmgf(\thetav,\thetavs) \)
is pointwise stochastically bounded in a rather strict sense.
We aim at establishing a similar bound for the maximum
of \( L(\thetav,\thetavs) + \Lmgf(\thetav,\thetavs) \). It turns out
that some payment for taking the maximum is necessary. Namely, we present a
penalty function \( \pen(\thetav) \) which ensures that the maximum of
\( \LL(\thetav,\thetavs) + \Lmgf(\thetav,\thetavs) - \pen(\thetav) \)
is bounded with exponential moments. Then we show that this fundamental fact
yields a number of straightforward corollaries about the quality of estimation.

The paper is organized as follows. The next section presents the main result
which describes an exponential upper bound for the (quasi) maximum likelihood.
Section~\ref{Scorolexp} discusses some implications of this exponential bound
for statistical inference.
In particular, we present a general likelihood-based construction of confidence sets
and establish an exponential bound for the coverage probability.
We also show that the considered estimate well concentrates on the level set of
the rate function \( \Lmgf(\thetav,\thetavs) \).
Under some standard conditions we show that such concentration sets become
usual root-n neighborhoods of the target \( \thetavs \).
Sections~\ref{SexpGLM} and \ref{SexpSI} illustrate the obtained general results
for two quite popular statistical models: generalized linear and single index.
These models are very well studied, the existing results claim asymptotic
normality and efficiency of the maximum likelihood estimate as the sample size
grows to infinity. On the contrary, our study focuses on nonasymptotic
deviation bounds and concentration properties of this estimate.
The main result giving an exponential bound for the maximum likelihood is based
on general results for the maximum of a random field described in
Section~\ref{SriskboundG}.

\section{Exponential bound for the maximum likelihood}
\label{Sriskboundex}
This section presents a general exponential bound on the
(quasi) maximum likelihood value in a quite general set-up.
The main result concerns the value of maximum
\( \LL(\tilde{\thetav}) = \max_{\thetav \in \Theta} \LL(\thetav) \)
rather than the point of maximum \( \tilde{\thetav} \).
Namely, we  aim at establishing some exponential bounds on the
supremum in \( \thetav \) of the random field
\begin{eqnarray*}
    \LL(\thetav,\thetavs)
    \eqdef
    \LL(\thetav) - \LL(\thetavs).
\end{eqnarray*}

%\subsection{Preliminaries}
%\label{Smoddev2}
%
In this paper we do not specify the structure of the process \( \LL(\thetav) \).
The basic assumption we impose on the considered model is that
\( \LL(\thetav) \) is absolutely continuous in \( \thetav \) and that
\( \LL(\thetav) \)  and its gradient w.r.t. \( \thetav \) have bounded exponential
moments.

\begin{description}
\item[\( \bb{(E)} \)]
\emph{
%There exists a positive value \( \mu \) such that 
The rate function \( \Lmgf(\thetav,\thetavs) \) is finite for all 
\( \thetav \in \Theta \):
}
\begin{eqnarray*}
    \Lmgf(\thetav,\thetavs)
    \eqdef
    - \log \E \exp \bigl\{ \LL(\thetav,\thetavs) \bigr\}.
\end{eqnarray*}
\end{description}
Note that this condition is automatically fulfilled if \( \P = \P_{\thetavs} \)
and \( \LL(\thetav) = \mu \log \bigl( d\P_{\thetav}/d\P_{\thetavs} \bigr) \)
with \( \mu \le 1 \) provided that
all \( \P_{\thetav} \) are absolutely continuous w.r.t. \( \P_{\thetavs} \).
With \( \mu = 1 \) and 
\( \LL(\thetav) = \log \bigl( d\P_{\thetav}/d\P_{\thetavs} \bigr) \), it holds
\( \Lmgf(\thetav,\thetavs) = - \log \E_{\thetavs} (d\P_{\thetav}/d\P_{\thetavs})
= 0 \). For \( \mu < 1 \),
\( \Lmgf(\thetav,\thetavs) = - \log \E_{\thetavs} (d\P_{\thetav}/d\P_{\thetavs})^{\mu} 
\ge 0 \) for \( \mu < 1 \) by the Jensen inequality.

The main observation behind the condition \( (E) \) is that
\begin{eqnarray*}
    \E \exp\bigl\{ \LL(\thetav,\thetavs) + \Lmgf(\thetav,\thetavs) \bigr\}
    =
    1.
\end{eqnarray*}
Our main goal is to get a similar bound for the
maximum of the random field
\( \LL(\thetav,\thetavs) + \Lmgf(\thetav,\thetavs) \)
over \( \thetav \in \Theta \).
Below in Section~\ref{Scorolexp} we show that such a bound implies
an exponential bound for the coverage probability for a confidence set
\( \cc{E}(\zz) = \{ \thetav: \LL(\tilde{\thetav},\thetav) \le \zz \} \) and
that the estimate \( \tilde{\thetav} \) well concentrates on
a set
\( \cc{A}(\rr,\thetavs) = \{ \thetav: \Lmgf(\thetav,\thetavs) \le \rr \} \)
in the sense that the probability of the event
\( \{ \tilde{\thetav} \not\in \cc{A}(\rr,\thetavs) \} \) is exponentially small in
\( \rr \).

Unfortunately, in some situations, the exponential moment of the maximum
of \( \LL(\thetav,\thetavs) + \Lmgf(\thetav,\thetavs) \) can be
unbounded.
We present a simple example of this sort.
\begin{example}
\label{ExGausshift}
Consider a Gaussian shift with only one observation \( Y \sim \cc{N}(\theta,1) \)
and suppose that the true parameter is \( \thetas = 0 \). 
Then the log-likelihood ratio \( \LL(\theta,\thetas) \) reads as 
\( \LL(\theta,\thetas) = Y \theta - \theta^{2}/2 \), and
%for \( \mu=1 \), 
it holds \( \Lmgf(\theta,\thetas) = 0 \),
\( \sup_{\theta} \LL(\theta) = Y^{2}/2 \) and
\( \E_{\thetas} \exp\bigl\{ \sup_{\theta} \LL(\theta,\thetas) \bigr\}
= \E_{\thetas} \exp\bigl\{ Y^{2}/2 \bigr\} = \infty \).
\end{example}

We therefore consider the penalized expression
\( \LL(\thetav,\thetavs) + \Lmgf(\thetav,\thetavs) - \pen(\thetav) \),
where the penalty function \( \pen(\thetav) \) should provide some
bounded exponential moments for
\begin{eqnarray*}
    \sup_{\thetav \in \Theta}
    \bigl[ \LL(\thetav,\thetavs) + \Lmgf(\thetav,\thetavs) - \pen(\thetav) \bigr] .
\end{eqnarray*}
To bound local fluctuations of the process \( \LL(\thetav) \), we introduce an
exponential moment condition on the stochastic component \( \zeta(\thetav) \):
\begin{eqnarray*}
    \zeta(\thetav)
    \eqdef
    \LL(\thetav) - \E \LL(\thetav).
\end{eqnarray*}
%where \( \Ldrift(\thetav) \eqdef \E \LL(\thetav) \),
%\( \zeta(\thetav) \eqdef \LL(\thetav) - \E \LL(\thetav) \).
Suppose also that the random function \( \zeta(\thetav) \) is differentiable
in \( \thetav \) and its gradient
\( \nabla \zeta(\thetav) = \partial \zeta(\thetav)/ \partial \thetav \in \R^{p} \)
fulfills the following condition:
\begin{description}
\item[\( \bb{(E\!D)} \)]
\emph{
There exist some continuous
symmetric matrix function \( V(\thetav) \) for \( \thetav \in\Theta \) and
constant \( \lambdab > 0 \) such that for all \( |\lambda| \le \lambdab \)
}
\begin{eqnarray}
\label{expzetac}
    \sup_{\gammav \in \cc{S}^{p}}
    \sup_{\thetav \in \Theta}
    \log \E \exp\biggl\{
        2 \lambda \frac{\gammav^{\T} \nabla \zeta(\thetav)}{\sqrt{\gammav^{\T} V(\thetav) \gammav}}
    \biggr\}
    \le
    %\kapla
    2 \lambda^{2} .
\end{eqnarray}
\end{description}

Define for every \( \thetav,\thetavc \in \Theta \),
\( \normc = \| \thetav - \thetavc \| \) and
\( \gammav = (\thetavc - \thetav)/\normc \)
\begin{eqnarray*}
    \Lexpm^{2}(\thetav,\thetavc)
    \eqdef
    \normc^{2}
    \int_{0}^{1} \gammav^{\T} V(\thetav + t \normc \gammav) \gammav dt .
\end{eqnarray*}
Next, introduce
for every \( \thetavd \in \Theta \) the local vicinity
\( \B(\reps,\thetavd) \) such that \( \Lexpm(\thetav,\thetavd) \le \reps \)
for all \( \thetav \in \B(\reps,\thetavd) \).

Let also the function \( V(\cdot) \) from \( (E\!D) \) satisfy the following
regularity condition:
\begin{description}
\item[\( \bb{(V)} \)]
\textit{
There exist constants
\( \reps > 0 \) and \( \nu_{1} \ge 1 \) such that
\begin{eqnarray*}
    \sup_{\thetav,\thetavd \in \Theta: \,\, \Lexpm(\thetav,\thetavd) \le \reps}
    \,\,
    \sup_{\gammav \in S^{p}}
    \frac{\gammav^{\T} V(\thetav) \gammav}{\gammav^{\T} V(\thetavd) \gammav}
    \le \nu_{1} \, .
\end{eqnarray*}
}
\end{description}

Now we are prepared to state the main result which gives some sufficient 
condition on the penalty function \( \pen(\thetav) \) ensuring the desired 
penalized exponential bound. It is a specification of a more general result 
from Theorem~\ref{Tsmoothpen} in Section~\ref{SriskboundG}. 

Here and in what follows \( \omega_{p} \) (resp. \( \entr_{p} \))
denotes the volume (resp. the entropy number) of the unit ball in \( \R^{p} \).

\begin{theorem}
\label{Tmainbound}
Suppose that the conditions \( (E) \) is fulfilled and
\( (E\!D) \) holds with some \( \lambdab \) and a matrix function
\( V(\thetav) \) which fulfills \( (V) \) for \( \reps > 0 \) and
\( \nu_{1} \ge 1 \).
If for some \( \mrho \in (0,1) \) with
\( { \mrho \reps}/(1-\mrho) \le \lambdab \),  the penalty function
\( \pen(\thetav) \) fulfills
\begin{eqnarray}
\label{penaltycondition}
    \penH_{\reps}(\mrho)
    \eqdef
    \log \biggl\{ \omega_{p}^{-1} \reps^{-p}
        \int_{\Theta} \sqrt{\det(V(\thetav))}
        \exp\bigl\{ - \mrho \pen_{\reps}(\thetav) \bigr\}  d \thetav
    \biggr\}
    < \infty
\end{eqnarray}
with \( \pen_{\reps}(\thetavd)
    =
    \inf_{\thetav \in \B(\reps,\thetavd)} \pen(\thetav) \),
then
\begin{eqnarray}
\label{penalizedbound}
    \E \exp\Bigl\{
        \sup_{\thetav \in \Theta} \mrho \bigl[
                \LL(\thetav,\thetavs)
                + \Lmgf(\thetav,\thetavs)
                - \pen(\thetav)
        \bigr]
    \Bigr\}
    \le
    \Crlq(\mrho)
%    \le
%    \Crlq ,
\end{eqnarray}
where
\begin{eqnarray}
\label{Crlqrho}
    \log \Crlq(\mrho)
    =
    \frac{2 \reps^{2} \mrho^{2}}{1 - \mrho}
    + (1 - \mrho) \entr_{p}
%    - \log(\omega_{p} \reps^{p})
    + \penH_{\reps}(\mrho)
    + p \log (\nu_{1}).
\end{eqnarray}
\end{theorem}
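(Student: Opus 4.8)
The plan is to reduce the statement to a maximal inequality for the random field \( g(\thetav) \eqdef \LL(\thetav,\thetavs) + \Lmgf(\thetav,\thetavs) \), which by \( (E) \) satisfies the pointwise normalization \( \E \exp\{g(\thetav)\} = 1 \) for every \( \thetav \), and whose stochastic gradient \( \nabla\zeta(\thetav) \) obeys the sub-Gaussian bound \( (E\!D) \). The whole argument splits into a \emph{local} estimate over one vicinity \( \B(\reps,\thetavd) \) and a \emph{global} patching of these estimates over a covering of \( \Theta \). The penalty enters only through the elementary inequality \( \pen(\thetav) \ge \pen_{\reps}(\thetavd) \), valid for \( \thetav \in \B(\reps,\thetavd) \), which lets me pull \( \exp\{-\mrho \pen_{\reps}(\thetavd)\} \) out of each local block.

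First I would prove the local bound \( \E \exp\{\mrho \sup_{\thetav \in \B(\reps,\thetavd)} g(\thetav)\} \le \exp\{2\reps^{2}\mrho^{2}/(1-\mrho) + (1-\mrho)\entr_{p}\} \), uniformly in the centre \( \thetavd \). The device is H\"older's inequality with exponents \( 1/\mrho \) and \( 1/(1-\mrho) \), applied to \( \exp\{\mrho g(\thetavd)\}\cdot \exp\{\mrho \sup_{\B}[g(\thetav) - g(\thetavd)]\} \): the first factor has \( \E \exp\{g(\thetavd)\} = 1 \) by \( (E) \) and so disappears, leaving the \( (1-\mrho) \)-th power of the exponential moment of the \emph{oscillation} \( \sup_{\B}[g(\thetav) - g(\thetavd)] \). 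This oscillation I would control by a chaining (Dudley) argument: integrating \( (E\!D) \) along segments shows each increment \( g(\thetav)-g(\thetavc) \) is sub-Gaussian with variance proxy \( \asymp \Lexpm^{2}(\thetav,\thetavc) \), and condition \( (V) \) lets me freeze \( \Lexpm \) to the Euclidean metric \( \gammav^{\T} V(\thetavd)\gammav \) on the ball, so the entropy count is that of a Euclidean \( p \)-ball and produces \( \entr_{p} \); the diameter bound \( \Lexpm \le \reps \) and the factor \( 2 \) in \( (E\!D) \) give variance proxy \( 4\reps^{2} \), hence the term \( 2\reps^{2}\mrho^{2}/(1-\mrho) \) after the \( (1-\mrho) \)-th power with tilting exponent \( \mrho/(1-\mrho) \le \lambdab/\reps \).

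Next I would patch globally. Taking a maximal \( \reps \)-separated set \( \{\thetavd_{j}\} \) in the \( V \)-metric, whose \( \reps \)-balls cover \( \Theta \), a union bound gives \( \E \exp\{\mrho \sup_{\Theta}[g - \pen]\} \le \sum_{j} \exp\{-\mrho \pen_{\reps}(\thetavd_{j})\}\, \E \exp\{\mrho \sup_{\B(\reps,\thetavd_{j})} g\} \). Inserting the local bound and comparing the sum with a Riemann integral turns \( \sum_{j} \exp\{-\mrho \pen_{\reps}(\thetavd_{j})\} \) into \( \omega_{p}^{-1}\reps^{-p}\int_{\Theta}\sqrt{\det V(\thetav)}\exp\{-\mrho \pen_{\reps}(\thetav)\}\,d\thetav = \exp\{\penH_{\reps}(\mrho)\} \), since the Euclidean volume of a \( V \)-ball of radius \( \reps \) is \( \omega_{p}\reps^{p}/\sqrt{\det V} \); the variation of \( \det V \) and of \( \pen_{\reps} \) across one ball is absorbed by \( (V) \) into the factor \( \exp\{p\log\nu_{1}\} \). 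Multiplying the three contributions reproduces \( \log\Crlq(\mrho) \) and hence \eqref{penalizedbound}.

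The main obstacle is the local oscillation bound, and within it the deterministic drift \( \phi(\thetav)-\phi(\thetavd) \) with \( \phi(\thetav) = \E\LL(\thetav) + \Lmgf(\thetav,\thetavs) \), which is \emph{not} mean-zero and a priori need not be small over a ball of \( V \)-radius \( \reps \). I would handle it by re-anchoring on the pointwise identity \( \E\exp\{g\}=1 \) at every scale of the chain, rather than telescoping additively from the single centre, so that the drift never accumulates; equivalently, by checking that \( \phi(\thetav)-\phi(\thetavd) \) is the log-ratio of the exponential moments of \( \zeta \)-increments, which \( (E\!D) \) controls on the ball. Verifying that this control is genuinely uniform in \( \thetavd \) and that the resulting constant collapses into \( 2\reps^{2}\mrho^{2}/(1-\mrho)+(1-\mrho)\entr_{p} \) is the delicate point; once it holds, the passage to the claimed bound is the bookkeeping of the previous paragraph.
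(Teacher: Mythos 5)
Your overall architecture --- a local bound obtained by H\"older with exponents \( 1/\mrho \) and \( 1/(1-\mrho) \), chaining for the increments, and a global patching step producing \( \penH_{\reps}(\mrho) \) and \( p\log\nu_{1} \) --- is the same as the paper's (Theorem~\ref{Tmainbound} is deduced from Theorem~\ref{Tsmoothpen}, i.e.\ from the local bound of Theorem~\ref{Txibound} plus the patching Lemma~\ref{Lmaxloc}). But the step you yourself flag as delicate is a genuine gap, and neither of your proposed repairs closes it. If you anchor the H\"older split at the ball centre \( \thetavd \), you must bound \( \E \exp\bigl\{ \tfrac{\mrho}{1-\mrho} \sup_{\thetav \in \B(\reps,\thetavd)} [g(\thetav) - g(\thetavd)] \bigr\} \) with \( g(\thetav) = \LL(\thetav,\thetavs) + \Lmgf(\thetav,\thetavs) \), and this quantity is \emph{not} bounded uniformly in \( \thetavd \). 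The paper's own Example~\ref{ExGausshift} shows this: there \( g(\theta) = Y\theta - \theta^{2}/2 \), and for a ball centred at \( \theta^{\circ} = R \) large, taking \( \theta = \theta^{\circ} - \reps \) gives \( g(\theta) - g(\theta^{\circ}) = -Y\reps + R\reps - \reps^{2}/2 \), so the second H\"older factor is at least of order \( \exp\{ \mrho R \reps /(1-\mrho)\} \), while the first factor \( (\E \ex^{g(\thetavd)})^{\mrho} \) equals \( 1 \) exactly: the normalization \( \E \ex^{g} = 1 \) throws away the fact that \( g(\thetavd) \approx -R^{2}/2 \) is very negative. Splitting at the centre destroys the cancellation between the very negative mean and the large positive drift oscillation, so no additive chaining of the full process \( g \) can give a bound uniform in the centre, even though the bound itself is true. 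Your ``equivalently'' claim is also false as stated: writing \( \phi(\thetav) = \E\LL(\thetav,\thetavs) + \Lmgf(\thetav,\thetavs) \), one has \( \phi(\thetav) - \phi(\thetavd) = - \log \E_{Q} \exp\{ \zeta(\thetav) - \zeta(\thetavd) \} \), where \( \E_{Q} \) is expectation under the measure obtained from \( \P \) by tilting with \( \exp\{ \zeta(\thetavd) - \zeta(\thetavs) \} \); condition \( (E\!D) \) controls exponential moments under \( \P \), not under \( Q \), so it gives no control of this log-ratio.

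The paper's resolution is a different anchoring, and it is the one essential idea your proposal is missing: inside each ball the anchor is not the centre but \( \thetav^{\sharp} = \argmax_{\thetav \in \B(\reps,\thetavd)} \{ \phi(\thetav) - \pen(\thetav) \} \), the maximizer of the \emph{penalized mean}. Then, deterministically, for every \( \thetav \) in the ball, \( g(\thetav) - \pen(\thetav) \le g(\thetav^{\sharp}) - \pen(\thetav^{\sharp}) + \zeta(\thetav) - \zeta(\thetav^{\sharp}) \), since the drift difference is \( \le 0 \) by the choice of anchor. After H\"older, the first factor equals \( \ex^{-\mrho \pen(\thetav^{\sharp})} \le \ex^{-\mrho \pen_{\reps}(\thetavd)} \) by condition \( (E) \), and only the \emph{centred} process \( \zeta \) is ever chained (Lemma~\ref{LstochUU}, with \( \lambda = \reps\mrho/(1-\mrho) \le \lambdab \)), producing \( 2\reps^{2}\mrho^{2}/(1-\mrho) + (1-\mrho)\entr_{p} \). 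With this single replacement your remaining steps are sound: your maximal-net-plus-Riemann-sum patching is a discrete variant of the paper's integral Lemma~\ref{Lmaxloc}, and either version yields \( \penH_{\reps}(\mrho) + p \log \nu_{1} \) via condition \( (V) \).
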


\subsection{Penalty via the norm \( \| \sqrt{V^{*}}(\thetav - \thetavs) \| \)}
The choice of the penalty function \( \pen(\thetav) \) can be made more precise 
if \( V(\thetav) \le V^{*} \) for a fixed matrix \( V^{*} \) and all 
\( \thetav \). 
This section describes how the penalty function can be defined 
in terms of the norm \( \| \sqrt{V^{*}}(\thetav - \thetavs) \| \). 

\begin{theorem}
\label{Tmainboundranking}
Let the conditions \( (E) \) and \( (E\!D) \) be fulfilled
and in addition \( V(\thetav) \le V^{*} \) for some matrix \( V^{*} \) for all
\( \thetav \in \Theta \). Let
\( \mrho \in (0,1) \) and \( \reps > 0 \) be fixed to ensure 
\( { \mrho \reps}/(1-\mrho) \le \lambdab \).
%the ranking function \( \ranking(\thetav) \) and the function \( \penb(\rr) \) be
%such that
Suppose that \( \penb(\rr) \) is a monotonously decreasing positive 
function on \( [0,+\infty ) \) satisfying 
\begin{eqnarray}
\label{penIntdef}
    \penInt^{*}
    \eqdef
    \omega_{p}^{-1} \int_{\R^{p}} \penb(\| \zv \|) d\zv 
    = 
    p \int_{0}^{\infty}  \penb(t) t^{p-1} dt
    < \infty .
\end{eqnarray}
Define 
\begin{eqnarray}
\label{penpenb}
    \pen(\thetav) 
    = 
    - \mrho^{-1} \log \penb\bigl( 
        \reps^{-1} \bigl\| \sqrt{V^{*}} (\thetav - \thetavs) \bigr\| + 1 
    \bigr).
\end{eqnarray}    
Then the assertion (\ref{penalizedbound}) holds with
\begin{eqnarray*}
    \log \Crlq(\mrho)
    =
    \frac{2 \reps^{2} \mrho^{2}}{1 - \mrho}
    + (1 - \mrho) \entr_{p}
    + \log (\penInt^{*}) .
\end{eqnarray*}
\end{theorem}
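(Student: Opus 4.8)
The plan is to reduce Theorem~\ref{Tmainboundranking} to the already established Theorem~\ref{Tmainbound} by invoking the latter with the \emph{constant} matrix function \( \tilde{V}(\thetav) \equiv V^{*} \) in place of \( V(\thetav) \). The first step is to check that every hypothesis of Theorem~\ref{Tmainbound} survives this substitution. Condition \( (E) \) does not involve \( V \) and is assumed outright. For \( (E\!D) \) I would exploit the domination \( V(\thetav) \le V^{*} \): setting \( \lambda' = \lambda \, \{\gammav^{\T} V(\thetav)\gammav\}^{1/2} / \{\gammav^{\T} V^{*}\gammav\}^{1/2} \), one has \( |\lambda'| \le |\lambda| \le \lambdab \) together with the identity
\[
    2\lambda \, \frac{\gammav^{\T} \nabla \zeta(\thetav)}{\sqrt{\gammav^{\T} V^{*} \gammav}}
    =
    2\lambda' \, \frac{\gammav^{\T} \nabla \zeta(\thetav)}{\sqrt{\gammav^{\T} V(\thetav) \gammav}},
\]
so that \( (E\!D) \) for \( V(\thetav) \) evaluated at \( \lambda' \) yields the same bound \( 2\lambda'^{2} \le 2\lambda^{2} \) for the \( V^{*} \)-version. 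Finally, condition \( (V) \) holds with \( \nu_{1} = 1 \) for the constant matrix function, since the ratio \( \gammav^{\T} V^{*}\gammav / \gammav^{\T} V^{*}\gammav \equiv 1 \); hence the term \( p \log(\nu_{1}) \) in \eqref{Crlqrho} drops out.

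With \( \tilde{V} \equiv V^{*} \) the quadratic form becomes explicit, \( \Lexpm^{2}(\thetav,\thetavc) = \| \sqrt{V^{*}}(\thetav - \thetavc) \|^{2} \), so the vicinity \( \B(\reps,\thetavd) \) may be taken as the ellipsoid \( \{ \thetav : \| \sqrt{V^{*}}(\thetav - \thetavd) \| \le \reps \} \). The key observation is that the choice \eqref{penpenb} makes \( \pen \) an increasing function of \( \| \sqrt{V^{*}}(\thetav - \thetavs) \| \), so the infimum defining \( \pen_{\reps}(\thetavd) \) is controlled by the triangle inequality: for \( \thetav \in \B(\reps,\thetavd) \),
\[
    \| \sqrt{V^{*}}(\thetav - \thetavs) \|
    \ge
    \| \sqrt{V^{*}}(\thetavd - \thetavs) \| - \reps,
\]
whence \( \reps^{-1} \| \sqrt{V^{*}}(\thetav - \thetavs) \| + 1 \ge \reps^{-1} \| \sqrt{V^{*}}(\thetavd - \thetavs) \| \). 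Monotonicity of \( \penb \) then gives, after passing to the infimum in \( \pen \),
\[
    \exp\bigl\{ - \mrho \, \pen_{\reps}(\thetavd) \bigr\}
    \le
    \penb\bigl( \reps^{-1} \| \sqrt{V^{*}}(\thetavd - \thetavs) \| \bigr).
\]
Here the deliberate \( +1 \) shift in \eqref{penpenb} is precisely what absorbs the radius \( \reps \) of the local vicinity.

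Third, I would insert this into \( \penH_{\reps}(\mrho) \) from \eqref{penaltycondition}, using \( \det(\tilde{V}(\thetav)) \equiv \det V^{*} \), extend the integral from \( \Theta \) to \( \R^{p} \) (legitimate since \( \penb \ge 0 \)), and change variables \( \zv = \reps^{-1} \sqrt{V^{*}}(\thetav - \thetavs) \), for which \( d\thetav = \reps^{p} (\det V^{*})^{-1/2} \, d\zv \). The factors \( \reps^{-p} \) and \( \sqrt{\det V^{*}} \) cancel exactly against the Jacobian, leaving
\[
    \exp\bigl\{ \penH_{\reps}(\mrho) \bigr\}
    \le
    \omega_{p}^{-1} \int_{\R^{p}} \penb(\| \zv \|) \, d\zv
    =
    \penInt^{*},
\]
that is \( \penH_{\reps}(\mrho) \le \log(\penInt^{*}) \), which is finite by \eqref{penIntdef}. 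Substituting \( \nu_{1} = 1 \) and this bound into \eqref{Crlqrho} reproduces the asserted expression for \( \log \Crlq(\mrho) \), and \eqref{penalizedbound} follows from Theorem~\ref{Tmainbound}.

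I expect the main obstacle to be the very first step: one must confirm that replacing the genuine local matrix \( V(\thetav) \) by the global dominating matrix \( V^{*} \) does not destroy the exponential-moment bound \( (E\!D) \). The rescaling identity above resolves this cleanly, because domination forces the effective multiplier \( \lambda' \) to be only smaller in modulus, keeping it inside the admissible range \( [-\lambdab, \lambdab] \). The remaining ingredients — the triangle-inequality bound with the \( +1 \) shift and the change of variables — are routine once the reduction to constant \( V^{*} \) is in place.
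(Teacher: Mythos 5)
Your proposal is correct and follows essentially the same route as the paper, which proves this result as a direct corollary of Theorem~\ref{Tmainbound} applied with \( V(\thetav) \equiv V^{*} \) and \( \nu_{1} = 1 \). The details you supply — the rescaling argument showing \( (E\!D) \) survives the replacement of \( V(\thetav) \) by the dominating matrix \( V^{*} \), the triangle-inequality bound exploiting the \( +1 \) shift, and the change of variables yielding \( \penH_{\reps}(\mrho) \le \log(\penInt^{*}) \) — are exactly the steps the paper leaves implicit here (they appear in its proof of the random-field analogue, Theorem~\ref{Tqrating}), so your write-up is a faithful, fleshed-out version of the intended argument.
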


\begin{proof}
This result is a straightforward corollary of Theorem~\ref{Tmainbound} applied with
\( V(\thetav) \equiv V^{*} \) and thus, condition \( (V) \) is fulfilled with 
\( \nu_{1} = 1 \).
\end{proof}

Here two natural ways of defining the penalty function 
\( \pen(\thetav) \): quadratic or logarithmic in 
\( \bigl\| \sqrt{V^{*}} (\thetav - \thetavs) \bigr\| \).
The functions \( \penb(\cdot) \)
and the corresponding \( \penInt^{*} \)-values are:
\begin{eqnarray}
\label{penint12}
\begin{array}{rclrcl}
    \penb_{1}(t)
    &=&
    e^{ - \delta_{1} (t-1)_{+}^{2}},
    &
    \penInt^{*}_{1}
    &=&
    1 + \omega_{p}^{-1} (\pi /\delta_{1})^{p/2},    
    \\
    \penb_{2}(t) 
    &=& 
    (t + 1)^{- p - \delta_{2}},
    &
    \penInt^{*}_{2}
    &=&
    p /\delta_{2} \, ,
\end{array}  
\end{eqnarray}
where \( \delta_{1},\delta_{2} > 0 \) are some constant and \( [a]_{+} \)
means \( \max\{ a,0 \} \). 
The corresponding penalties read as:
\begin{eqnarray*}
    \pen_{1}(\thetav) 
    &=& 
    \mrho^{-1} \delta_{1} \,  
    \reps^{-2} \bigl\| \sqrt{V^{*}} (\thetav - \thetavs) \bigr\|^{2} .
    \nn
    \pen_{2}(\thetav) 
    &=& 
    - \mrho^{-1} (p + \delta_{2}) 
    \log \bigl( \reps^{-1} \bigl\| \sqrt{V^{*}} (\thetav - \thetavs) \bigr\| + 2 \bigr).
\end{eqnarray*}

\subsection{Some corollaries}
\label{Scorolexp}
Theorem~\ref{Tmainbound} claims that the value
\(
    \LL(\thetav,\thetavs) + \Lmgf(\thetav,\thetavs) - \pen(\thetav)
\)
is uniformly in \( \thetav \in \Theta \) stochastically bounded.
In particular, one can plug the estimate \( \tilde{\thetav} \) in place of
\( \thetav \):
\begin{eqnarray}
\label{expbound}
    \E \exp\Bigl\{
        \mrho \bigl[
            \LL(\tilde{\thetav},\thetavs)
            + \Lmgf(\tilde{\thetav},\thetavs)
            - \pen(\tilde{\thetav})
        \bigr]
    \Bigr\}
    \le
    \Crlq(\mrho) .
%    , \quad 0 \le \mrho < 1 .
\end{eqnarray}
Below we present some corollaries of this result.

\subsubsection{Concentration properties of the estimator \( \tilde{\thetav} \)}
Define for every subset \( A \) of the parameter set \( \Theta \) the value
\begin{eqnarray}
\label{Azex}
    \zz(A)
    \eqdef
    \inf_{\thetav \not\in A}
    \{ \Lmgf(\thetav,\thetavs) - \pen(\thetav) \}.
\end{eqnarray}    
The next result shows that the estimator \( \tilde{\thetav} \) deviates
out of the set \( A \) with an  exponentially small probability of order 
\( \exp\{  - \mrho \zz(A) \} \).

\begin{corollary}
\label{CLDboundR0}
Suppose (\ref{expbound}). 
Then for any set \( A \subset \Theta \)
\begin{eqnarray*}
    \P\bigl( \tilde{\thetav} \not \in A \bigr)
    \le 
    \Crlq(\mrho) \ex^{- \mrho \zz(A)}.
\end{eqnarray*}
\end{corollary}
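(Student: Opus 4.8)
The plan is to deduce this concentration bound directly from the penalized exponential bound stated in equation~(\ref{expbound}), exploiting the defining property of \( \zz(A) \) in (\ref{Azex}). The key observation is that the event \( \{ \tilde{\thetav} \not\in A \} \) forces the value \( \Lmgf(\tilde{\thetav},\thetavs) - \pen(\tilde{\thetav}) \) to be at least \( \zz(A) \), since \( \zz(A) \) is by definition the infimum of exactly this quantity over all \( \thetav \notin A \). This converts a statement about the \emph{location} of \( \tilde{\thetav} \) into a \emph{lower bound} on a scalar functional, which is precisely the kind of statement an exponential moment bound controls.

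First I would record the elementary inequality
\begin{eqnarray*}
    \P\bigl( \tilde{\thetav} \not\in A \bigr)
    \le
    \P\bigl( \Lmgf(\tilde{\thetav},\thetavs) - \pen(\tilde{\thetav}) \ge \zz(A) \bigr),
\end{eqnarray*}
which holds because on \( \{ \tilde{\thetav} \not\in A \} \) the point \( \tilde{\thetav} \) is admissible in the infimum defining \( \zz(A) \). Next I would apply the exponential Chebyshev (Markov) inequality to the right-hand side. The natural functional to exponentiate is \( \mrho \bigl[ \LL(\tilde{\thetav},\thetavs) + \Lmgf(\tilde{\thetav},\thetavs) - \pen(\tilde{\thetav}) \bigr] \), whose exponential moment is bounded by \( \Crlq(\mrho) \) according to (\ref{expbound}). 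The crucial simplification is that by definition of the estimator, \( \LL(\tilde{\thetav},\thetavs) = \max_{\thetav} \LL(\thetav,\thetavs) \ge \LL(\thetavs,\thetavs) = 0 \), so the stochastic term \( \LL(\tilde{\thetav},\thetavs) \) is nonnegative and can only help the bound.

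Concretely, on the event \( \{ \Lmgf(\tilde{\thetav},\thetavs) - \pen(\tilde{\thetav}) \ge \zz(A) \} \) we have
\begin{eqnarray*}
    \exp\Bigl\{ \mrho \bigl[ \LL(\tilde{\thetav},\thetavs) + \Lmgf(\tilde{\thetav},\thetavs) - \pen(\tilde{\thetav}) \bigr] \Bigr\}
    \ge
    \exp\bigl\{ \mrho \zz(A) \bigr\},
\end{eqnarray*}
using \( \LL(\tilde{\thetav},\thetavs) \ge 0 \). Taking expectations, bounding the indicator by the exponential and invoking (\ref{expbound}) yields
\begin{eqnarray*}
    \ex^{\mrho \zz(A)} \, \P\bigl( \tilde{\thetav} \not\in A \bigr)
    \le
    \E \exp\Bigl\{ \mrho \bigl[ \LL(\tilde{\thetav},\thetavs) + \Lmgf(\tilde{\thetav},\thetavs) - \pen(\tilde{\thetav}) \bigr] \Bigr\}
    \le
    \Crlq(\mrho),
\end{eqnarray*}
and rearranging gives the claimed bound \( \P( \tilde{\thetav} \not\in A ) \le \Crlq(\mrho) \ex^{-\mrho \zz(A)} \).

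I do not anticipate a genuine obstacle here: the result is a one-line corollary once the two ingredients are aligned. The only point requiring a moment of care is the sign of the stochastic term — one must verify that \( \LL(\tilde{\thetav},\thetavs) \ge 0 \) so that dropping it (rather than carrying it) still produces a valid lower bound inside the exponent; this follows immediately from \( \tilde{\thetav} \) being the maximizer of \( \LL(\thetav,\thetavs) \) and the identity \( \LL(\thetavs,\thetavs) = 0 \). A secondary subtlety is measurability of \( \{ \tilde{\thetav} \not\in A \} \) and existence of the maximizer, but these are tacitly assumed throughout the paper and need no separate treatment.
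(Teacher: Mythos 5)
Your proof is correct and follows essentially the same argument as the paper: on the event \( \{ \tilde{\thetav} \not\in A \} \) the definition of \( \zz(A) \) forces \( \Lmgf(\tilde{\thetav},\thetavs) - \pen(\tilde{\thetav}) \ge \zz(A) \), and combining this with \( \LL(\tilde{\thetav},\thetavs) \ge 0 \) and the exponential Markov inequality applied to (\ref{expbound}) gives the bound. The only difference is presentational — you state the Markov step explicitly before chaining the inequalities, whereas the paper writes the chain in one display — but the ingredients and their order of use are identical.
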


\begin{proof}
If \( \tilde{\thetav} \not\in A \), then 
\( \Lmgf(\tilde{\thetav},\thetavs) - \pen(\tilde{\thetav}) \ge \zz(A) \).
As \( \LL(\tilde{\thetav},\thetavs) \ge 0 \), it follows
\begin{eqnarray*}
    \Crlq(\mrho)
    & \ge &
    \E \exp\Bigl\{
        \mrho \bigl[
            \LL(\tilde{\thetav},\thetavs)
            + \Lmgf(\tilde{\thetav},\thetavs)
            - \pen(\tilde{\thetav})
        \bigr]
    \Bigr\}
    \nn
    & \ge &
    \E \exp\Bigl\{
        \mrho \bigl[
            \Lmgf(\tilde{\thetav},\thetavs)
            - \pen(\tilde{\thetav})
        \bigr]
    \Bigr\}
%    \nn & \ge &
    \ge 
    \ex^{\zz(A)} \P\bigl( \tilde{\thetav} \not\in A \bigr)
\end{eqnarray*}    
as required.
\end{proof}

Two particular choices of the set \( A \) can be mentioned:
\begin{eqnarray*}
    A 
    &=& 
    \cc{A}(\rr,\thetavs) = \{ \thetav: \Lmgf(\thetav,\thetavs) \le \rr \} ,
    \nn
    A 
    &=& 
    \cc{A}'(\rr,\thetavs) 
    = 
    \{ \thetav: \Lmgf(\thetav,\thetavs) - \pen(\thetav) \le \rr \} ,
\end{eqnarray*}
For the set \( \cc{A}'(\rr,\thetavs) \), Corollary~\ref{CLDboundR0} yields
\begin{eqnarray*}
    \P\bigl( \tilde{\thetav} \not \in \cc{A}'(\rr,\thetavs) \bigr)
    =
    \P\bigl( \Lmgf(\tilde{\thetav},\thetavs) - \pen(\tilde{\thetav}) \ge \rr \bigr)
    \le 
    \Crlq(\mrho) \ex^{- \mrho \rr}.
\end{eqnarray*}
For the set \( \cc{A}(\rr,\thetavs) \), define additionally \( \pnn(\rr) \) as the 
minimal value for which 
\begin{eqnarray*}
    \Lmgf(\thetav,\thetavs) - \pen(\thetav)
    \ge 
    \rr - \pnn(\rr),
    \qquad 
    \thetav \not\in \cc{A}(\rr,\thetavs),
\end{eqnarray*}    
or, equivalently,
\begin{eqnarray}
\label{pnnrr}
    \pnn(\rr)
    =
    \sup_{\thetav \not\in \cc{A}(\rr,\thetavs)} 
    \bigl\{ \rr + \pen(\thetav) - \Lmgf(\thetav,\thetavs) \bigr\}.
\end{eqnarray}    

\begin{corollary}
\label{CLDboundR}
Suppose (\ref{expbound}). 
Then for any \( \rr > 0 \)
\begin{eqnarray*}
    \P\bigl( \tilde{\thetav} \not \in \cc{A}(\rr,\thetavs) \bigr)
    =
    \P\bigl( \Lmgf(\tilde{\thetav},\thetavs) \ge \rr \bigr)
    \le 
    \Crlq(\mrho) \ex^{- \mrho [\rr - \pnn(\rr)]}.
\end{eqnarray*}
\end{corollary}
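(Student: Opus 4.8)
The plan is to reduce everything to the already-proved Corollary~\ref{CLDboundR0} by applying it with the specific choice \( A = \cc{A}(\rr,\thetavs) \), and then to identify the general quantity \( \zz(A) \) from (\ref{Azex}) with \( \rr - \pnn(\rr) \). So the whole argument is a bookkeeping reduction; no new probabilistic input is needed beyond (\ref{expbound}).

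First I would dispose of the displayed equality of events. Since \( \cc{A}(\rr,\thetavs) = \{ \thetav : \Lmgf(\thetav,\thetavs) \le \rr \} \), the event \( \{ \tilde{\thetav} \not\in \cc{A}(\rr,\thetavs) \} \) is \( \{ \Lmgf(\tilde{\thetav},\thetavs) > \rr \} \), which is contained in \( \{ \Lmgf(\tilde{\thetav},\thetavs) \ge \rr \} \); the two differ only on the boundary \( \{ \Lmgf(\tilde{\thetav},\thetavs) = \rr \} \), and this discrepancy only improves the final inequality, so it is harmless for an upper bound.

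The key algebraic step is to rewrite the correction term \( \pnn(\rr) \). Starting from its definition (\ref{pnnrr}), I would pull the constant \( \rr \) outside the supremum and convert the supremum into an infimum by a sign flip:
\begin{eqnarray*}
    \pnn(\rr)
    =
    \rr + \sup_{\thetav \not\in \cc{A}(\rr,\thetavs)} \bigl\{ \pen(\thetav) - \Lmgf(\thetav,\thetavs) \bigr\}
    =
    \rr - \inf_{\thetav \not\in \cc{A}(\rr,\thetavs)} \bigl\{ \Lmgf(\thetav,\thetavs) - \pen(\thetav) \bigr\}.
\end{eqnarray*}
The infimum on the right is precisely \( \zz(\cc{A}(\rr,\thetavs)) \) in the notation (\ref{Azex}), whence \( \zz(\cc{A}(\rr,\thetavs)) = \rr - \pnn(\rr) \).

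Finally I would substitute this identity into the conclusion of Corollary~\ref{CLDboundR0}, which gives \( \P(\tilde{\thetav}\not\in \cc{A}(\rr,\thetavs)) \le \Crlq(\mrho)\,\ex^{-\mrho\,\zz(\cc{A}(\rr,\thetavs))} = \Crlq(\mrho)\,\ex^{-\mrho[\rr - \pnn(\rr)]} \), exactly as claimed. I do not expect any genuine obstacle: the only points demanding a little care are matching the \( \sup \)/\( \inf \) conventions between (\ref{pnnrr}) and (\ref{Azex}), and the boundary (strict-versus-nonstrict) issue noted above, both of which are routine.
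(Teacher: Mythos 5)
Your proposal is correct and coincides with the paper's own (implicit) argument: Corollary~\ref{CLDboundR} is exactly Corollary~\ref{CLDboundR0} applied with \( A = \cc{A}(\rr,\thetavs) \), together with the identity \( \zz(\cc{A}(\rr,\thetavs)) = \rr - \pnn(\rr) \), which follows from (\ref{pnnrr}) and (\ref{Azex}) by the same sign-flip manipulation you give. The boundary (strict versus nonstrict inequality) point you flag is a genuine but harmless imprecision already present in the paper's statement, and your handling of it is fine.
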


In typical situations the value \( \Lmgf(\thetav,\thetavs) \) is nearly proportional to the sample size
\( n \) and is nearly quadratic in \( \thetav - \thetavs \) so that for a fixed
\( \rr \) the set \( \cc{A}(\rr,\thetavs) \) corresponds to a root-\( n \)
neighborhood of the point \( \thetavs \).
See below in Section~\ref{Sergodic} for a precise formulation.

\subsubsection{Confidence sets based on \( \LL(\tilde{\thetav},\thetav) \)}
Next we discuss how the exponential bound can be used for establishing some
risk bounds and for constructing the confidence sets for the target
\( \thetavs \) based on the maximized value \( \LL(\tilde{\thetav},\thetav) \).
The inequality (\ref{expbound}) claims that  \( \LL(\tilde{\thetav},\thetavs) \)
is stochastically bounded with finite exponential moments.
This implies boundness of the polynomial moments.

Define
\begin{eqnarray}
\label{pnnex}
    \pnn
    \eqdef
    \pnn(0)
    =
    \sup_{\thetav} [\pen(\thetav) - \Lmgf(\thetav,\thetavs)]_{+} \, .
\end{eqnarray}
%where \( [a]_{+} \) means \( \max\{ a,0 \} \).
%This condition is easy to check in typical situations.

\begin{corollary}
\label{CLDboundRB}
Suppose (\ref{expbound}) and let
\( \pnn \) from (\ref{pnnex}) be finite. Then
\begin{eqnarray*}
    \E \exp\bigl\{
        \mrho \LL( \tilde{\thetav},\thetavs \bigr)
        \bigr\}
    \le
    e^{\mrho \pnn} \Crlq(\mrho).
\end{eqnarray*}
\begin{comment}
Moreover, for any \( r > 0 \), there is a constant \( \Crlp(r) \) such that
\begin{eqnarray*}
    \E \bigl| L\bigl( \tilde{\thetav},\thetavs \bigr) \bigr|^{r}
    \le
    \bigl( 1 + \pnn \bigr)^{r} \Crlp(r) .
\end{eqnarray*}
\end{comment}
\end{corollary}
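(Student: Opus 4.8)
The plan is to reduce the assertion to the master inequality~(\ref{expbound}) by a purely deterministic rearrangement, paying the finite constant \( \pnn \) for replacing the rate function and penalty at \( \tilde{\thetav} \). First I would note that the definition~(\ref{pnnex}) of \( \pnn \) gives, for \emph{every} \( \thetav \in \Theta \), the bound \( \pen(\thetav) - \Lmgf(\thetav,\thetavs) \le \pnn \), since the supremum of the positive parts dominates each individual (possibly negative) value. Evaluating this at the random maximizer \( \tilde{\thetav} \) yields the surely valid inequality \( \pen(\tilde{\thetav}) - \Lmgf(\tilde{\thetav},\thetavs) \le \pnn \).

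Next I would decompose the target quantity as
\[
\LL(\tilde{\thetav},\thetavs)
=
\bigl[ \LL(\tilde{\thetav},\thetavs) + \Lmgf(\tilde{\thetav},\thetavs) - \pen(\tilde{\thetav}) \bigr]
+ \bigl[ \pen(\tilde{\thetav}) - \Lmgf(\tilde{\thetav},\thetavs) \bigr].
\]
By the first step the second bracket is at most \( \pnn \), so after multiplying by \( \mrho > 0 \), exponentiating and taking expectations one obtains
\[
\E \exp\bigl\{ \mrho \LL(\tilde{\thetav},\thetavs) \bigr\}
\le
\ex^{\mrho \pnn}
\, \E \exp\Bigl\{ \mrho \bigl[ \LL(\tilde{\thetav},\thetavs) + \Lmgf(\tilde{\thetav},\thetavs) - \pen(\tilde{\thetav}) \bigr] \Bigr\}.
\]
Applying~(\ref{expbound}) to bound the remaining expectation by \( \Crlq(\mrho) \) gives the claim.

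I expect no genuine analytic obstacle: the whole probabilistic content is already carried by~(\ref{expbound}), and this corollary only trades the rate-function and penalty terms evaluated at \( \tilde{\thetav} \) for the deterministic constant \( \pnn \). The only points requiring care are that the positive part in~(\ref{pnnex}) does produce a uniform upper bound valid at the random point \( \tilde{\thetav} \), and that the finiteness hypothesis on \( \pnn \) is exactly what keeps the prefactor \( \ex^{\mrho \pnn} \) — and hence the exponential moment on the left — finite. Note in particular that, unlike in Corollary~\ref{CLDboundR0}, the nonnegativity of \( \LL(\tilde{\thetav},\thetavs) \) is not used here, since we bound its exponential moment directly rather than discarding it.
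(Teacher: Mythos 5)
Your proof is correct and is essentially the paper's own argument: the paper's one-line proof uses exactly the same decomposition, bounding \( \pen(\tilde{\thetav}) - \Lmgf(\tilde{\thetav},\thetavs) \le \pnn \) pointwise and then invoking (\ref{expbound}). Your additional observations (that the positive part in (\ref{pnnex}) yields a uniform bound valid at the random point, and that nonnegativity of \( \LL(\tilde{\thetav},\thetavs) \) is not needed here) are accurate but do not change the route.
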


\begin{proof}
Observe that
\begin{eqnarray*}
    \E \exp\bigl\{
        \mrho \LL( \tilde{\thetav},\thetavs \bigr)
        \bigr\}
    \le 
    e^{\mrho \pnn}\E \exp\Bigl\{
        \mrho \bigl[
            \LL( \tilde{\thetav},\thetavs \bigr)
            + \Lmgf(\tilde{\thetav},\thetavs) - \pen(\tilde{\thetav})
        \bigr]
    \Bigr\}
    \le 
    e^{\mrho \pnn} \Crlq(\mrho)
\end{eqnarray*}
as required.
\end{proof}

By the same reasons, one can construct confidence sets based on
the (quasi) likelihood process.
Define
\begin{eqnarray*}
    \cc{E}(\zz)
    =
    \bigl\{ \thetav\in \Theta: \LL(\tilde{\thetav},\thetav) \le \zz \bigr\}.
\end{eqnarray*}
The bound for \( \LL(\tilde{\thetav},\thetavs) \) ensures that
the target \( \thetavs \) belongs to this set with a high probability provided
that \( \zz \) is large enough.
The next result claims that
\( \cc{E}(\zz) \) does not cover the true value \( \thetavs \) with a probability
which decreases exponentially in \( \zz \).

\begin{corollary}
\label{CLDboundCS}
Suppose (\ref{expbound}).
For any \( \zz > 0 \)
\begin{eqnarray*}
    \P\bigl( \thetavs \notin \cc{E}(\zz) \bigr)
    \le
    \Crlq(\mrho) \exp\bigl\{ - \mrho \zz + \mrho \pnn  \bigr\}.
\end{eqnarray*}
\end{corollary}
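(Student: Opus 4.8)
The plan is to deduce the claim directly from the exponential moment bound of Corollary~\ref{CLDboundRB} via an exponential Chebyshev argument, so that essentially no fresh probabilistic input is required. The genuine work — the penalized bound (\ref{expbound}) and hence Theorem~\ref{Tmainbound} — has already been carried out, and the present statement is a mechanical rescaling of the deviation it controls.

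First I would rewrite the coverage event as a one-sided deviation for the scalar quantity \( \LL(\tilde{\thetav},\thetavs) \). By the definition of the confidence set, \( \thetavs \in \cc{E}(\zz) \) holds precisely when \( \LL(\tilde{\thetav},\thetavs) \le \zz \), so that
\begin{eqnarray*}
    \P\bigl( \thetavs \notin \cc{E}(\zz) \bigr)
    =
    \P\bigl( \LL(\tilde{\thetav},\thetavs) > \zz \bigr).
\end{eqnarray*}
This reduction is the only conceptual point: it turns the question about whether a random set covers the fixed target into a tail bound for a single real-valued functional of the estimator.

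Next I would apply the exponential Markov inequality. Since \( \mrho \in (0,1) \) is positive, the map \( x \mapsto \ex^{\mrho x} \) is increasing, and therefore
\begin{eqnarray*}
    \P\bigl( \LL(\tilde{\thetav},\thetavs) > \zz \bigr)
    \le
    \ex^{- \mrho \zz}\,
    \E \exp\bigl\{ \mrho\, \LL(\tilde{\thetav},\thetavs) \bigr\}.
\end{eqnarray*}
At this stage Corollary~\ref{CLDboundRB} supplies the bound \( \E \exp\{ \mrho \LL(\tilde{\thetav},\thetavs) \} \le \ex^{\mrho \pnn} \Crlq(\mrho) \), valid whenever \( \pnn \) from (\ref{pnnex}) is finite. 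Substituting it gives exactly \( \Crlq(\mrho)\exp\{ -\mrho \zz + \mrho \pnn \} \), which is the assertion.

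I do not expect a real obstacle here, since all the heavy lifting lies upstream in establishing (\ref{expbound}) and the moment bound of Corollary~\ref{CLDboundRB}. The only caveat worth flagging is the standing requirement that the penalty excess \( \pnn \) be finite, so that the invoked moment bound is nontrivial; this finiteness is exactly the hypothesis of Corollary~\ref{CLDboundRB} and is inherited here without further comment.
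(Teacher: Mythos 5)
Your proof is correct and in substance identical to the paper's: both arguments rest on the exponential Markov inequality, the inequality \( \pen(\thetav) - \Lmgf(\thetav,\thetavs) \le \pnn \) coming from the definition (\ref{pnnex}), and the bound (\ref{expbound}). The only difference is organizational — you route through Corollary~\ref{CLDboundRB} as a packaged intermediate step, whereas the paper re-derives the same chain directly from (\ref{expbound}) by first absorbing \( \Lmgf(\tilde{\thetav},\thetavs) - \pen(\tilde{\thetav}) \ge -\pnn \) into the event and then applying Markov to the penalized quantity.
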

\begin{proof}
The bound (\ref{expbound}) implies for the event
\( \{ \thetavs \notin \cc{E}(\zz) \} = \{ \LL(\tilde{\thetav},\thetavs) > \zz \} \)
\begin{eqnarray*}
     \P\bigl\{  \thetavs \notin \cc{E}(\zz) \bigr\}
     & \le  &
     \P\bigl\{
        \mrho \bigl[ \LL(\tilde{\thetav},\thetavs)
            + \Lmgf(\tilde{\thetav},\thetavs) - \pen(\tilde{\thetav})
        \bigr]
     > \mrho \zz - \mrho \pnn \bigr\}
     \nn
     & \le &
     \exp\bigl\{-\mrho \zz + \mrho \pnn \bigr\}
     \E \exp\bigl\{ 
        \mrho \bigl[ 
            \LL(\tilde{\thetav},\thetavs)
            + \Lmgf(\tilde{\thetav},\thetavs) - \pen(\tilde{\thetav}) 
        \bigr]
    \bigr\}
     \nn
     & \le &
     \Crlq(\mrho)\exp\bigl\{ - \mrho \zz + \mrho \pnn  \bigr\}
\end{eqnarray*}
as required.
\end{proof}

\subsection{Identifiability condition}
Until this point no any identifiability condition on the model has been used, that is, the
presented results apply even for a very poor parametrization. Actually, a
particular parametrization of the parameter set plays no role as long as the
value of maximum is considered. If we want to derive any quantitative result on the point of
maximum \( \tilde{\thetav} \), then the parametrization matters and
an identifiability condition is really necessary.
Here we follow the usual path
%Finally we discuss a risk bound in terms of the classical loss \( \tilde{\thetav} - \thetavs \).
by applying the quadratic lower bound for the rate function \( \Lmgf(\thetav,\thetavs) \)
in a vicinity of the point \( \thetavs \).
% and to use the concentration property of the estimator \( \tilde{\thetav} \).
Suppose that the rate function
\( \Lmgf( \thetav,\thetavs) = - \log \E \exp \bigl\{ \LL(\thetav,\thetavs) \bigr\} \)
 is two times continuously differentiable in \( \thetav \).
Obviously \( \Lmgf(\thetavs,\thetavs) = 0 \) and simple algebra yields
for the gradient \( \nabla \Lmgf(\thetav,\thetavs) = d \Lmgf(\thetav,\thetavs) / d \thetav
\):
\begin{eqnarray*}
    \nabla \Lmgf(\thetav,\thetavs)|_{\thetav = \thetavs}
    =
    - \E \nabla \LL(\thetav)|_{\thetav = \thetavs}
    =
    - \nabla \E \LL(\thetavs)
    = 0
\end{eqnarray*}
because \( \thetavs \) is the point of maximum of \( \E \LL(\thetav) \).
The Taylor expansion of the second order in a vicinity of \( \thetavs \) yields
for all \( \thetav \) close to \( \thetavs \) the following approximation:
\begin{eqnarray*}
    \Lmgf(\thetav,\thetavs)
    \approx
    (\thetav - \thetavs)^{\T} I(\thetavs) (\thetav - \thetavs)/2
\end{eqnarray*}
with the matrix 
\( I(\thetavs) = \E \nabla^{2} \Lmgf(\thetav,\thetavs)|_{\thetav = \thetavs} \).
So, one can expect that the rate function \( \Lmgf(\thetav,\thetavs) \) is 
nearly quadratic in \( \thetav - \thetavs \) in a neighborhood 
of the point \( \thetavs \). 

\begin{corollary}
\label{CLDboundMT}
Let (\ref{expbound}) hold.
Suppose that for some  positive symmetric matrix \( D \)
and some \( \rr > 0 \), 
the function \( \Lmgf(\thetav,\thetavs) \) fulfills
\begin{eqnarray}
\label{ratestt}
    \Lmgf(\thetav,\thetavs) 
    \ge
    (\thetav - \thetavs)^{\T} D^{2} (\thetav - \thetavs) ,
    \qquad
    \thetav \in \cc{A}(\rr,\thetavs),
\end{eqnarray}
Then for any \( \zz \le \rr \)
\begin{eqnarray*}
    \P\bigl( \| D(\tilde{\thetav} - \thetavs) \|^{2} > \zz \bigr)
    \le
    \Crlq(\mrho)
    \ex^{ - \mrho [\zz - \pnn(\zz)] } .
\end{eqnarray*}
\end{corollary}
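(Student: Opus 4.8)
The plan is to reduce the statement to Corollary~\ref{CLDboundR}, which already controls the probability that \( \tilde{\thetav} \) escapes the level set \( \cc{A}(\rr,\thetavs) \) of the rate function. The only genuine work is to translate the event \( \{ \| D(\tilde{\thetav} - \thetavs) \|^{2} > \zz \} \), which is phrased in terms of the norm of the estimation error, into an event about \( \Lmgf(\tilde{\thetav},\thetavs) \); this is exactly what the quadratic lower bound (\ref{ratestt}) is designed to do.

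First I would establish the set inclusion \( \{ \| D(\tilde{\thetav} - \thetavs) \|^{2} > \zz \} \subseteq \{ \Lmgf(\tilde{\thetav},\thetavs) > \zz \} \) by a case split on whether \( \tilde{\thetav} \) lies in \( \cc{A}(\rr,\thetavs) \). On the event where \( \tilde{\thetav} \in \cc{A}(\rr,\thetavs) \), the quadratic bound (\ref{ratestt}) applies and gives \( \Lmgf(\tilde{\thetav},\thetavs) \ge \| D(\tilde{\thetav} - \thetavs) \|^{2} > \zz \). On the complementary event \( \tilde{\thetav} \not\in \cc{A}(\rr,\thetavs) \), the definition of the level set yields \( \Lmgf(\tilde{\thetav},\thetavs) > \rr \ge \zz \), where the last inequality is precisely the hypothesis \( \zz \le \rr \). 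Hence in both cases \( \Lmgf(\tilde{\thetav},\thetavs) > \zz \), that is, \( \tilde{\thetav} \not\in \cc{A}(\zz,\thetavs) \).

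Then I would simply invoke Corollary~\ref{CLDboundR} with the radius \( \rr \) replaced by \( \zz \), which is legitimate since that corollary is valid for every positive radius. This gives \( \P\bigl( \tilde{\thetav} \not\in \cc{A}(\zz,\thetavs) \bigr) \le \Crlq(\mrho)\, \ex^{-\mrho[\zz - \pnn(\zz)]} \). Combining this with the inclusion of the previous paragraph delivers the claimed bound. Note that the penalty correction \( \pnn(\zz) \) appearing in the final estimate is the same quantity (\ref{pnnrr}) used in Corollary~\ref{CLDboundR}, so no new term is introduced.

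The argument presents no serious obstacle; the one point demanding care is the exact role of the hypothesis \( \zz \le \rr \). It is what guarantees that the quadratic lower bound is available on the whole of \( \cc{A}(\rr,\thetavs) \) while the region outside that set already forces the rate function above \( \zz \). Without this comparison the case split would fail, since (\ref{ratestt}) is only assumed to hold inside \( \cc{A}(\rr,\thetavs) \).
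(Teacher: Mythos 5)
Your proof is correct and follows essentially the same route as the paper: the paper also splits the event \( \{ \| D(\tilde{\thetav} - \thetavs) \|^{2} > \zz \} \) according to whether \( \tilde{\thetav} \in \cc{A}(\rr,\thetavs) \), uses (\ref{ratestt}) on the first piece and \( \Lmgf(\tilde{\thetav},\thetavs) > \rr \ge \zz \) on the second to conclude \( \{ \| D(\tilde{\thetav} - \thetavs) \|^{2} > \zz \} \subseteq \{ \Lmgf(\tilde{\thetav},\thetavs) > \zz \} \), and then invokes Corollary~\ref{CLDboundR}. Your remark on the role of the hypothesis \( \zz \le \rr \) matches exactly where it is used in the paper's argument.
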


\begin{proof}
It is obvious that
\begin{eqnarray*}
    \bigl\{ \| D (\tilde{\thetav} - \thetavs) \|^{2} > \zz \bigr\}
    & \subseteq &
    \bigl\{
        \| D (\tilde{\thetav} - \thetavs) \|^{2} > \zz , 
        \, \tilde{\thetav} \in \cc{A}(\rr,\thetavs)
    \bigr\}
    \cup
    \bigl\{  \tilde{\thetav} \not\in \cc{A}(\rr,\thetavs) \bigr\} 
    \nn
    & \subseteq &
    \bigl\{
        \Lmgf(\tilde{\thetav},\thetavs) > \zz , 
        \, \tilde{\thetav} \in \cc{A}(\rr,\thetavs)
    \bigr\}
    \cup
    \bigl\{  \Lmgf(\tilde{\thetav},\thetavs) > \zz \bigr\}
    \nn
    &=&
    \bigl\{  \Lmgf(\tilde{\thetav},\thetavs) > \zz \bigr\}
\end{eqnarray*}
and the result follows from Corollary~\ref{CLDboundR}.
\end{proof}

In the next theorem we assume the lower bound (\ref{ratestt}) to be fulfilled
on the whole parameter set \( \Theta \). The general case can be reduced to 
this one by using once again the concentration property of Corollary~\ref{CLDboundR}.

\begin{theorem}
\label{CLDboundMT2}
Suppose \( (E) \), \( (E\!D) \) with \( V(\thetav) \le V^{*} \) 
for a matrix \( V^{*} \). Let also for some \( \fis > 0 \)
\begin{eqnarray}
\label{Lmgfquad}
    \Lmgf(\thetav,\thetavs)
    \ge
    \fis^{2} (\thetav - \thetavs)^{\T} V^{*} (\thetav - \thetavs) ,
    \qquad
    \thetav \in \Theta .
\end{eqnarray}    
Fix some \( \fiss \le \fis \) and define
\( \pen(\thetav) 
%= (1 - \srho) \fis^{2}\bigl| \bigl\| \sqrt{V^{*}} (\thetav - \thetavs) \bigr\| + \reps \bigr|^{2} 
\) by
\begin{eqnarray}
\label{pentid}
    \pen(\thetav)
    =
    \fiss^{2} (\thetav - \thetavs)^{\T} V^{*} (\thetav - \thetavs) .
\end{eqnarray}    
Then with  \( \srho = 1 - \fiss^{2} / \fis^{2} \) it holds
\begin{eqnarray}
\label{crlqb}
    \Crlq(\mrho,\srho)
    & \eqdef &
    \log \E \exp\bigl\{ \mrho \sup_{\thetav}\bigl[ \LL(\thetav,\thetavs) 
        + \Lmgf(\thetav,\thetavs) - \pen(\thetav) \bigr] 
    \bigr\}
    \nn
    & \le &
    2 \mrho
    + (1 - \mrho) \entr_{p}
    + \log \biggl( 
        1 
        + \frac{\omega_{p}^{-1} \pi^{p/2}}{(1 - \mrho)^{p/2} \fiss^{p}} 
    \biggr)
    \nn
    & \le &
    p C(\mrho)
    +
    p \log \bigl( |\fis^{2} (1 - \srho) (1 - \mrho)|^{-1/2} \bigr)
\end{eqnarray}    
for some fixed constant \( C(\mrho) \). 
In addition, \( \pnn(\rr) \) from (\ref{pnnrr}) fulfills \( \pnn(\rr)  = 0 \) for all 
\( \rr \ge 0 \) yielding 
for any \( \zz > 0 \) the concentration property and confidence bound:
\begin{eqnarray*}
\begin{array}{rclcrcl}
    \P\bigl( \tilde{\thetav} \not\in \cc{A}(\zz,\thetavs) \bigr)
    & \le &
    \Crlq(\mrho,\srho) \ex^{- \mrho \srho \zz  } ,
    & \quad &
    \cc{A}(\zz,\thetavs) 
    &=& 
    \{ \thetav: \Lmgf(\thetav,\thetavs) \le \zz \},
    \nn
    \P\bigl( \thetavs \not\in \cc{E}(\zz) \bigr)
    & \le &
    \Crlq(\mrho,0) \ex^{- \mrho \zz  } ,
    & \quad &
    \cc{E}(\zz) 
    &=& 
    \{ \thetav: \LL(\tilde{\thetav},\thetav) \le \zz \} .
\end{array}
\end{eqnarray*}    
\end{theorem}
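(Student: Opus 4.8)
The plan is to derive everything as a specialization of Theorem~\ref{Tmainboundranking}, where the penalty is built from a monotone function $\penb(\cdot)$ via (\ref{penpenb}). The key observation is that the quadratic penalty (\ref{pentid}) is exactly of the form (\ref{penpenb}) with the Gaussian profile $\penb_{1}$ from (\ref{penint12}). Indeed, matching $\pen(\thetav)=\fiss^{2}(\thetav-\thetavs)^{\T}V^{*}(\thetav-\thetavs)$ against the first display in the list of corresponding penalties, namely $\pen_{1}(\thetav)=\mrho^{-1}\delta_{1}\reps^{-2}\|\sqrt{V^{*}}(\thetav-\thetavs)\|^{2}$, forces the identification $\delta_{1}=\mrho\,\fiss^{2}\reps^{2}$. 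I would then substitute this $\delta_{1}$ into the formula $\penInt^{*}_{1}=1+\omega_{p}^{-1}(\pi/\delta_{1})^{p/2}$, so that $\log(\penInt^{*}_{1})=\log\bigl(1+\omega_{p}^{-1}(\pi/(\mrho\,\fiss^{2}\reps^{2}))^{p/2}\bigr)$. Choosing the free scale $\reps$ conveniently (e.g.\ a value that absorbs the $\mrho\reps^{2}$ factor into the quoted constant $2\mrho$, consistent with the constraint $\mrho\reps/(1-\mrho)\le\lambdab$) and invoking the $\log\Crlq(\mrho)$ formula from Theorem~\ref{Tmainboundranking} delivers the first inequality in (\ref{crlqb}); the second, cruder inequality $pC(\mrho)+p\log(|\fis^{2}(1-\srho)(1-\mrho)|^{-1/2})$ then follows by writing $\fiss^{2}=\fis^{2}(1-\srho)$ and bounding $\log(1+x)\le\log 2+\log x$ for the dominant term, collecting all $p$-independent and $\mrho$-only pieces into $C(\mrho)$.

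Next I would verify the claim $\pnn(\rr)=0$ for all $\rr\ge 0$. From the definition (\ref{pnnrr}), $\pnn(\rr)=\sup_{\thetav\not\in\cc{A}(\rr,\thetavs)}\{\rr+\pen(\thetav)-\Lmgf(\thetav,\thetavs)\}$. The hypothesis (\ref{Lmgfquad}) gives $\Lmgf(\thetav,\thetavs)\ge\fis^{2}(\thetav-\thetavs)^{\T}V^{*}(\thetav-\thetavs)$ on all of $\Theta$, while (\ref{pentid}) has $\pen(\thetav)=\fiss^{2}(\thetav-\thetavs)^{\T}V^{*}(\thetav-\thetavs)$ with $\fiss\le\fis$. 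Hence $\pen(\thetav)-\Lmgf(\thetav,\thetavs)\le(\fiss^{2}-\fis^{2})(\thetav-\thetavs)^{\T}V^{*}(\thetav-\thetavs)\le 0$, so the bracketed quantity never exceeds $\rr$. Moreover, on the complement $\thetav\not\in\cc{A}(\rr,\thetavs)$ one has $\Lmgf(\thetav,\thetavs)>\rr$, which makes $\rr-\Lmgf(\thetav,\thetavs)<0$ and combined with $\pen(\thetav)\ge 0$ shows the supremum is attained (in the limit) at $0$; together these force $\pnn(\rr)=0$.

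Finally I would read off the two probability bounds. For the concentration statement, I apply Corollary~\ref{CLDboundR}, which gives $\P(\tilde{\thetav}\not\in\cc{A}(\zz,\thetavs))\le\Crlq(\mrho)\ex^{-\mrho[\zz-\pnn(\zz)]}$; since $\pnn(\zz)=0$ this becomes $\Crlq(\mrho)\ex^{-\mrho\zz}$. To recover the sharper rate $\ex^{-\mrho\srho\zz}$ with $\srho=1-\fiss^{2}/\fis^{2}$ stated in the theorem, I would instead exploit the quadratic gap between $\Lmgf$ and $\pen$: on $\thetav\not\in\cc{A}(\zz,\thetavs)$ we have $\Lmgf(\thetav,\thetavs)-\pen(\thetav)\ge(1-\fiss^{2}/\fis^{2})\Lmgf(\thetav,\thetavs)\ge\srho\zz$, so the set-based Corollary~\ref{CLDboundR0} with $\zz(\cc{A}(\zz,\thetavs))\ge\srho\zz$ yields the factor $\ex^{-\mrho\srho\zz}$ directly. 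For the confidence bound, I apply Corollary~\ref{CLDboundCS} with $\pnn=\pnn(0)=0$, giving $\P(\thetavs\not\in\cc{E}(\zz))\le\Crlq(\mrho,0)\ex^{-\mrho\zz}$, where the argument $0$ in $\Crlq(\mrho,0)$ records the choice $\srho=0$ (equivalently $\fiss=\fis$). The main obstacle I anticipate is the bookkeeping in the first step—tracking how the free parameter $\reps$ and the identification $\delta_{1}=\mrho\fiss^{2}\reps^{2}$ interact so that the $2\reps^{2}\mrho^{2}/(1-\mrho)$ term from Theorem~\ref{Tmainboundranking} collapses to the clean constant $2\mrho$ quoted in (\ref{crlqb}), and isolating precisely which pieces may be folded into the abstract constant $C(\mrho)$ in the second inequality.
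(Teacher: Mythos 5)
Your first step is exactly the paper's proof of (\ref{crlqb}): the paper applies Theorem~\ref{Tmainboundranking} with \( \penb(t) = \exp\{-(1-\mrho)\fiss^{2}(t-1)_{+}^{2}\} \) and \( \reps^{2} = (1-\mrho)/\mrho \), which is precisely your identification \( \delta_{1} = \mrho\,\fiss^{2}\reps^{2} \) combined with the choice of \( \reps \) that turns \( 2\reps^{2}\mrho^{2}/(1-\mrho) \) into \( 2\mrho \) and \( \penInt^{*}_{1} \) into \( 1+\omega_{p}^{-1}\pi^{p/2}(1-\mrho)^{-p/2}\fiss^{-p} \). Your closing appeals to Corollary~\ref{CLDboundR0} (for concentration) and Corollary~\ref{CLDboundCS} (for coverage) are also sound and match the paper's use of Corollaries~\ref{CLDboundR} and \ref{CLDboundCS}.

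The genuine gap is your middle step, the purported proof that \( \pnn(\rr) = 0 \) for all \( \rr \ge 0 \). The inference ``\( \rr - \Lmgf(\thetav,\thetavs) < 0 \) and \( \pen(\thetav) \ge 0 \), hence the supremum is \( 0 \)'' is a non sequitur: a negative term plus a nonnegative term need not be \( \le 0 \). Indeed the claim is false in general. If (\ref{Lmgfquad}) holds with equality, so that \( \pen(\thetav) = (\fiss^{2}/\fis^{2})\Lmgf(\thetav,\thetavs) \), then for \( \thetav \notin \cc{A}(\rr,\thetavs) \)
\begin{eqnarray*}
    \rr + \pen(\thetav) - \Lmgf(\thetav,\thetavs)
    =
    \rr - \srho \Lmgf(\thetav,\thetavs)
    \;\uparrow\;
    (1-\srho)\rr
    \qquad \text{as } \Lmgf(\thetav,\thetavs) \downarrow \rr ,
\end{eqnarray*}
so \( \pnn(\rr) = (1-\srho)\rr > 0 \) whenever \( \rr > 0 \) and \( \srho < 1 \). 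What is true --- and all that the paper's own proof asserts --- is \( \pnn(\rr) \le (\fiss^{2}/\fis^{2})\rr = (1-\srho)\rr \) together with \( \pnn = \pnn(0) = 0 \); the statement ``\( \pnn(\rr)=0 \)'' in the theorem is an overstatement of the paper itself, and it is neither needed nor provable. (Had it been true, Corollary~\ref{CLDboundR} would yield the exponent \( \ex^{-\mrho\zz} \), strictly stronger than the stated \( \ex^{-\mrho\srho\zz} \); note that \( \ex^{-\mrho\srho\zz} \) is the \emph{weaker} rate, not the ``sharper'' one as you call it --- this mismatch should have been a warning sign.) Fortunately, your final bounds do not rest on the faulty claim: the concentration bound via Corollary~\ref{CLDboundR0}, using \( \Lmgf - \pen \ge \srho\Lmgf \ge \srho\zz \) off \( \cc{A}(\zz,\thetavs) \), is correct and is equivalent to the paper's route of inserting \( \pnn(\zz) \le (1-\srho)\zz \) into Corollary~\ref{CLDboundR}; and the coverage bound needs only \( \pnn(0) = 0 \), which your correct observation \( \pen(\thetav) \le \Lmgf(\thetav,\thetavs) \) on \( \Theta \) delivers. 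So replace the claim and proof of ``\( \pnn(\rr)=0 \)'' by ``\( \pnn(\rr) \le (1-\srho)\rr \)'' and the rest of your proposal stands essentially as the paper's argument.
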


\begin{proof}
We apply Theorem~\ref{Tmainboundranking} with 
\begin{eqnarray*}
    \penb(t)
    =
    \exp \bigl\{ - (1 - \mrho) \fiss^{2} (t-1)_{+}^{2} \bigr\} 
\end{eqnarray*}    
leading for \( \reps^{2} = (1 - \mrho) / \mrho \) and 
\( t = \reps^{-1} \bigl\| \sqrt{V^{*}} (\thetav - \thetavs) \bigr\| \)
to the formula (\ref{pentid}) for \( \pen(\thetav) \).
By simple algebra
\begin{eqnarray*}
    \penInt^{*}
    =
    \omega_{p}^{-1} \int_{\R^{p}} \penb(\| \thetav \|) d\thetav 
    = 
    1 +
    \omega_{p}^{-1} \frac{\pi^{p/2}}{(1 - \mrho)^{p/2} \fiss^{p}} ;
\end{eqnarray*}    
cf. the bound (\ref{penint12}) for \( \penInt^{*} \) 
with \( \delta_{1} = (1 - \mrho) \fiss^{2} \).
This implies the bound (\ref{crlqb}) for the \( \Crlq(\mrho) \)
because \( p^{-1} \entrl_{p} \) and \( p^{-1} \log \omega_{p}^{-1} \)
are bounded by some fixed constants.

The inequality (\ref{Lmgfquad}) ensures for \( \rr = \Lmgf(\thetav,\thetavs) \) 
that 
\(
    \pen(\thetav) 
    \le 
    \fiss^{2}/\fis^{2} \rr
\),
i.e. \( \pnn(\rr) \le \fiss^{2}/\fis^{2} \rr \) and \( \pnn = \pnn(0) = 0 \).
Finally, 
the concentration and coverage bounds follow from Corollaries~\ref{CLDboundR} and \ref{CLDboundCS}.
\end{proof}

\begin{remark}
\label{Rpenmix}
If the quadratic lower bound (\ref{Lmgfquad}) is only fulfilled for 
\( \thetav \) from an elliptic neighborhood \( \BB(\rr,\thetavs) 
= \bigl\{ \thetav: \bigl\| \sqrt{V^{*}} (\thetav - \thetavs) \bigr\| \le \rr \bigr\} \) of the point 
\( \thetavs \) with a sufficiently large \( \rr \), then it is reasonable to redefine the penalty function 
using the hybrid proposal: 
\begin{eqnarray*}
    \pen(\thetav)
    =
    \begin{cases}
        \fiss^{2} \bigl\| \sqrt{V^{*}} (\thetav - \thetavs) \bigr\|^{2}, 
        & \thetav \in \BB(\rr,\thetavs) , \\
        \mrho^{-1} (p + 1) \log \bigl( \bigl\| \sqrt{V^{*}} (\thetav - \thetavs) \bigr\| + 2 \bigr), 
        & \thetav \not\in \BB(\rr,\thetavs) .
    \end{cases}
\end{eqnarray*}    
Then the bound (\ref{crlqb}) still applies with the obvious correction of the value 
\( \Crlq(\mrho,\srho) \). However, the values \( \pnn \) and \( \pnn(\rr) \) from 
(\ref{pnnrr}) entering in our risk bounds have to be 
corrected depending on the behavior of the rate function \( \Lmgf(\thetav,\thetavs) \) 
for \( \thetav \not\in \BB(\rr,\thetavs) \).
\end{remark}

\subsection{Discussion}
This section collects some comments about the presented exponential bound.

\subsubsection*{Bounds for polynomial loss}
Our concentration result is stated in terms if the rate function 
\( \Lmgf(\thetav,\thetavs) \). Note that
the bounds (\ref{crlqb}) and (\ref{Lmgfquad}) imply the usual result about the 
quadratic loss \( \| \sqrt{V^{*}}(\tilde{\thetav} - \thetavs) \|^{2} \):
\begin{eqnarray*}
    \P\bigl( 
        \| \fis \sqrt{V^{*}}(\tilde{\thetav} - \thetavs) \|^{2} > \zz 
    \bigr) 
    \le 
    \Crlq(\mrho,\srho) \ex^{- \mrho \srho \zz  } .
\end{eqnarray*}
Note however, that the 
result (\ref{crlqb}) in terms of the rate function \( \Lmgf(\thetav,\thetavs) \) is 
more accurate because the lower bound (\ref{Lmgfquad}) can be very rough. 
The bound (\ref{Lmgfquad}) as well as the bound 
\( V(\thetav) \le V^{*} \)  are only used to evaluate the constants in the 
exponential risk bound. Moreover, if \( \mrho \) or \( \srho \) approaches one, 
the leading term in the risk bound is 
\( p \log \bigl( |(1 - \srho) (1 - \mrho)|^{-1/2} \bigr) \) which does not 
depend on \( \fis \) or \( V^{*} \).

\subsubsection*{Coverage probability and risk bounds}
The result of Corollary~\ref{CLDboundRB} justifies the use of confidence set
\( \cc{E}(\zz) = \bigl\{ \thetav: \LL(\tilde{\thetav},\thetav) \le \zz \bigr\} \).
However, the bound for the  coverage probability given 
by this result is quite rough and cannot be used for practical purposes. One 
has to apply one or another resampling scheme to fix a proper value \( \zz(\alpha) \)
providing the prescribed coverage probability \( 1 - \alpha \).

The same remark applies to the result of 
Corollary~\ref{CLDboundMT}. All these bounds are deduced from rather rough exponential 
inequalities and constants shown there are not optimal. However, the concentration 
property enables us to apply the classical one-step improvement technique to 
build a new estimate which 
achieves the asymptotic efficiency bound.

\subsubsection*{Root-n consistency}
\label{Sergodic}
Suppose that there exists a constant \( n \) (usually this constant means the sample
size) such that the functions
\begin{eqnarray*}
    v(\thetav)
    \eqdef
    n^{-1} V(\thetav),
    \qquad
    \lmgf(\thetav,\thetavs)
    \eqdef
    n^{-1}\Lmgf(\thetav,\thetavs)
\end{eqnarray*}
are continuous and bounded on every compact set by constants which only
depend on this set.
In addition we assume similarly to (\ref{ratestt}) that for some
fixed symmetric positive matrix \( D_{1} \) and some \( \rr > 0 \),
it holds in the vicinity \( \cc{A}(\rr,\thetavs) \) of the point \( \thetavs \):
\begin{eqnarray}
\label{lmfgvt}
    \qquad
    \lmgf(\thetav,\thetavs)
    \ge
    (\thetav - \thetavs)^{\T} D_{1}^{2} (\thetav - \thetavs),
    \qquad
    v(\thetav)
    \le
    \fis^{-2} D_{1}^{2} \, .
%    \quad
%    \thetav \in \cc{A}(\rr,\thetavs) .
\end{eqnarray}
Then \( \Lmgf(\thetav,\thetavs)
\ge n (\thetav - \thetavs)^{\T} D_{1}^{2} (\thetav - \thetavs) \) and the elliptic
set \( \cc{A}^{*}(\rr,\thetavs)
\eqdef \{ \thetav: (\thetav - \thetavs)^{\T} D_{1}^{2} (\thetav - \thetavs) \le \rr/n \} \)
is a root-n neighborhood of the point \( \thetavs \).
By Theorem~\ref{CLDboundMT2} the estimate \( \tilde{\thetav} \) deviates from
this neighborhood with probability which decreases exponentially with \( \rr
\): 
\begin{eqnarray*}
    \P\bigl( \| D_{1} (\tilde{\thetav} - \thetavs) \|^{2} > \rr/n \bigr)
    \le
    \Crlq(\mrho,\srho) \ex^{- \mrho \srho \rr } .
\end{eqnarray*}

\subsubsection*{Local approximation}
The standard asymptotic theory of parameter estimation heavily uses the idea of 
local approximation: the considered (quasi) log likelihood is approximated by 
the log-likelihood of another simpler model in the vicinity of the true point
yielding the local asymptotic equivalence of the original and the approximating 
model. The local asymptotic normality (LAN) condition is the most popular example 
of this approach; see \cite{IH1981}, Ch. 2,  for more details.
 A combination of this idea with the concentration property of 
Corollary~\ref{CLDboundR} can be used to derive sharp asymptotic risk bounds 
for the estimate \( \tilde{\thetav} \); see again \cite{IH1981}, Ch. 3. 
Similarly one can derive non asymptotic risk in the framework of this paper.
However, a 
precise formulation of the related results is to be given elsewhere.

\subsubsection*{Large and moderate deviation}
The obtained results can be used to derive large and moderate deviations
for the estimate \( \tilde{\thetav} \); cf. 
\cite{J1998},
\cite{sieddzh1987}.
Particularly, the deviation result from 
Corollary~\ref{CLDboundR} can be used to study the efficiency of the estimate 
\( \tilde{\thetav} \) in the Bahadur sense; see e.g. \cite{Ar2006} and reference 
therein.
\begin{comment}
\subsubsection*{Application to model selection problem}
Parametric modeling and estimation is usually accompanied by model check and 
model selection. Both these issues are not discussed in this paper. We only 
mention that the inference for almost any  model selection procedure heavily 
relies on the deviation bounds for the considered estimates, see e.g. 
\cite{BiMa1993},
\cite{BBiMa1999}.
\end{comment}

\section{Estimation in a generalized linear model}
\label{SexpGLM}
In this section we illustrate the general results of Sections~\ref{Sriskboundex}
and \ref{Scorolexp} on the problem of estimating the parameter vector
in the so called generalized linear model.
Let \( \cc{P} \) be an exponential family with the canonical parametrization
(EFC) which means that the corresponding log-likelihood function can
be written in the form
\begin{eqnarray*}
    \ell(y,\upsilon)
    =
    y \upsilon - d(\upsilon) + \ell(y)
\end{eqnarray*}
where \( d(\cdot) \) is a given convex function; see \citeasnoun{greensil1994}.
%We suppose that there exists some \( \upsilonb \le \infty \) such that
%\( d(\upsilon) \) is finite for all \( \upsilon \le \upsilonb \).
The term \( \ell(y) \)
is unimportant and it cancels in the log-likelihood ratio.
%, we skip it in what follows.

Let \( \Yn = (Y_{1},\ldots,Y_{n}) \) be an observed sample.
A \emph{generalized linear} assumption means
that the \( Y_{i} \)'s are independent, the distribution of every \( Y_{i} \)
belongs to \( \cc{P} \) and the corresponding parameter linearly depends
on given feature vectors \( \Psi_{i} \):
\begin{eqnarray}
\label{modelrd}
    Y_{i}
    \sim
    P_{\Psi_{i}^{\T} \thetav}
    %P_{\fu_{i}}
\end{eqnarray}
%where the underlying parameter \( \fu_{i} \) can be different for each \( i \).
%Regression analysis aims at explaining this parameter \( \fu_{i} \) as a function
%of the explanatory vector \( \Psi_{i} \in \R^{d} \):
%\( \fu_{i} = f(\Psi_{i}) \) for some \emph{regression function} \( f(\cdot) \).
%
To be more specific we consider a deterministic explanatory variables \( \Psi_{1},\ldots,\Psi_{n}
\). The case of a random design can be considered in the same way.

The parametric assumption (\ref{modelrd})
leads to the log-likelihood
\( L(\thetav) = \sum_{i} \ell(Y_{i},\Psi_{i}^{\T}\thetav) \):
% where \( \ell(y,\upsilon) = y \upsilon - d(\upsilon) \) is the log-likelihood
%function for \( \cc{P} \):
\begin{eqnarray}
\label{Lthetaex}
    L(\thetav) = \sum_{i} \ell(Y_{i},\Psi_{i}^{\T}\thetav)
    =
    \sum_{i} \bigl\{
        Y_{i} \Psi_{i}^{\T}\thetav - d\bigl( \Psi_{i}^{\T}\thetav \bigr)
        +
        \ell(Y_{i})
    \bigr\} .
\end{eqnarray}
Asymptotic properties of the MLE \( \tilde{\thetav} = \argmax_{\thetav} L(\thetav) \)
are well studied. We refer to \citeasnoun{fahr1985}, \citeasnoun{lang1996},
\citeasnoun{chen1999} and the book \citeasnoun{mccu1989} for further references.
The results claim asymptotic consistency, normality and efficiency of the
estimate \( \tilde{\thetav} \).

Our approach is a bit different because we do not assume that the underlying model
follow (\ref{modelrd}). The observations \( Y_{i} \) are independent, otherwise
any particular structure is allowed. In particular, the distribution of
every \( Y_{i} \) does not necessarily belong to \( \cc{P} \).
The considered problem is the problem of the best parametric
approximation of the data distribution \( \P \) by the GLM's of the form
\( \prod_{i} P_{\Psi_{i}^{\T} \thetav} \).
\begin{example}[Mean regression]
\label{Exmeanregr}
The least squares estimate \( \tilde{\thetav} \) in the classical mean regression
 minimizes the sum of squared residuals:
\begin{eqnarray*}
    \tilde{\thetav}
    =
    \argmin_{\thetav} \sum_{i} \bigl( Y_{i} - \Psi_{i}^{\T} \thetav \bigr)^{2} .
\end{eqnarray*}
This estimate can be viewed as the quasi MLE for to the Gaussian
homogeneous errors. However, many of its properties continue to hold even if
the errors are not i.i.d. Gaussian. What we only need is the existence of
exponential moments of the errors.
\end{example}

\begin{example}[Poisson regression]
\label{Expoisregr}
Let \( Y_{i} \) be some nonnegative integers, observed at ``locations'' \( X_{i} \). 
Such data often appear in digital imaging, positron emission tomography, 
queueing and traffic theory, and many others. A natural way of modeling such 
data is to assume that every \( Y_{i} \) is Poissonian with the parameter 
which depends on the locations \( X_{i} \) through the regression vector 
\( \Psi_{i} \). A generalized linear model assumes that the canonical parameter 
of the underlying Poisson distribution of \( Y_{i} \) linearly depends on the vector \( \Psi_{i} \)
leading to the (quasi) MLE
\begin{eqnarray*}
    \tilde{\thetav}
    =
    \argmax_{\thetav}
    \sum_{i=1}^{n} \bigl\{ 
        Y_{i} \Psi_{i}^{\T}\thetav - \exp( \Psi_{i}^{\T}\theta) % - \log(Y_{i}!) 
    \bigr\}.
\end{eqnarray*}    
For our further analysis we only require that every \( Y_{i} \) has a bounded 
exponential moment, see below the condition (\ref{expzetai}) for a precise formulation.
\end{example}

In the general situation, 
for some \( \mu > 0 \) which will be fixed later, define 
\begin{eqnarray*}
    \LL(\thetav)
    \eqdef
    \mu \sum_{i} \bigl\{
        Y_{i} \Psi_{i}^{\T}\thetav - d\bigl( \Psi_{i}^{\T}\thetav \bigr)
    \bigr\} .
\end{eqnarray*}    
The target \( \thetavs \) maximizes \( \E \LL(\thetav) \):
\begin{eqnarray}
\label{thetavsglm}
    \thetavs
    =
    \argmax_{\thetav} \E \LL(\thetav)
    =
    \argmax_{\thetav}
    \sum_{i} \bigl\{
        \mY_{i} \Psi_{i}^{\T}\thetav - d(\Psi_{i}^{\T}\thetav)
    \bigr\} ,
\end{eqnarray}
where \( \mY_{i} = \E Y_{i} \). This yields
\begin{eqnarray}
\label{extremeexp}
    \nabla \E \LL(\thetav) \big|_{\thetav = \thetavs}
    &=&
    \mu \sum_{i} \bigl\{
        \mY_{i} - \dot{d}(\Psi_{i}^{\T}\thetavs)
    \bigr\} \Psi_{i}
    =
    0,
%    \\
%    \nabla^{2} \E \LL(\thetav)  \big|_{\thetav = \thetavs}
%    &=&
%    - \mu \sum_{i} \ddot{d}(\Psi_{i}^{\T}\thetavs) \Psi_{i} \Psi_{i}^{\T}
%    \le
%    0
%\label{extremeexpd}
\end{eqnarray}
%because \( d(\cdot) \) is convex with \( \ddot{d}(\upsilon) > 0 \) for all
%\( \upsilon \).
%
Next,
for every \( \thetav \)
\begin{eqnarray*}
    \zeta(\thetav)
    & \eqdef &
    \LL(\thetav) - \E \LL(\thetav)
    =
    \mu \sum_{i}  (Y_{i} - \mY_{i}) \Psi_{i}^{\T}\thetav ,
    \nn
    \nabla \zeta(\thetav)
    &=&
    \nabla \zeta
    =
    \mu \sum_{i}  (Y_{i} - \mY_{i}) \Psi_{i},
\end{eqnarray*}
Let there exist  a positive value \( \lambdab_{1} \), and
for every \( i \le n \) the value \( \expzeta_{i} > 0 \) such that
\begin{eqnarray}
\label{expzetai}
    \log \E \exp\Bigl\{
        2 \lambda \frac{Y_{i} - \mY_{i}}{\expzeta_{i}}
    \Bigr\}
    \le
    2 \lambda^{2},
    \qquad
    |\lambda| \le \lambdab_{1} \, .
\end{eqnarray}
In the case of Gaussian errors, one can take
\( \expzeta_{i} = s_{i} \eqdef \E^{1/2} (Y_{i} - \mY_{i})^{2} \).
%In the general case we suppose without loss of generality that
%\( \expzeta_{i} \ge s_{i} \).
Define
\begin{eqnarray}
\label{Vexpglm}
    V_{1}
    \eqdef
    \sum_{i} \expzeta^{2}_{i} \, \Psi_{i} \Psi_{i}^{\T} ,
    \qquad 
    V \eqdef \mu^{2} V_{1}.
\end{eqnarray}
The matrix \( V \) is a symmetric and non-negative.
%Without loss of generality we assume that all the eigenvalues of \( V \)
%exceed one. Otherwise we increase to one each eigenvalue below one.
Denote \( c_{i}(\gammav) \eqdef \gammav^{\T} \Psi_{i} \expzeta_{i} (\gammav^{\T} V_{1} \gammav)^{-1/2} \)
for any \( \gammav \in S^{p} \). 
By definition, \( |c_{i}(\gammav)| \le 1 \), however, usually 
\( |c_{i}(\gammav)| \) is much smaller, of order \( 1/\sqrt{n} \).

\begin{lemma}
\label{Lexpglm}
Suppose (\ref{expzetai}).
If for some \( \mu > 0 \), it holds 
\( \mu |\Psi_{i}^{\T} (\thetav - \thetavs)| \expzeta_{i} \le \lambdab_{1} \) for
for all \( i \) and all \( \thetav \in \Theta \), then \( (E) \) is satisfied.
Moreover, if \( \lambda \) is such that
\( |\lambda c_{i}(\gammav)| \le  \lambdab_{1} \)
for all \( i \) and any \( \gammav \in S^{d} \),
then \( (E\!D) \) holds with this \( \lambda \) and \( V(\cdot) \equiv V \)
from (\ref{Vexpglm}).
\end{lemma}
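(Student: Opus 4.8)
The plan is to reduce both assertions to the one-dimensional exponential bound (\ref{expzetai}) by exploiting the independence of the $Y_{i}$ and the additive structure of $\LL(\thetav)$.

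For $(E)$ I would first expand $\LL(\thetav,\thetavs) = \mu \sum_{i} \bigl\{ Y_{i} \Psi_{i}^{\T}(\thetav - \thetavs) - d(\Psi_{i}^{\T}\thetav) + d(\Psi_{i}^{\T}\thetavs) \bigr\}$ and factorise the exponential moment over the independent coordinates. The purely deterministic factors $\exp\{-\mu[d(\Psi_{i}^{\T}\thetav) - d(\Psi_{i}^{\T}\thetavs)]\}$ and $\exp\{\mu \mY_{i} \Psi_{i}^{\T}(\thetav - \thetavs)\}$ come out of the expectation, leaving the product over $i$ of $\E \exp\{\mu (Y_{i} - \mY_{i}) \Psi_{i}^{\T}(\thetav - \thetavs)\}$. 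To each of these I would apply (\ref{expzetai}) with $\lambda = \mu \Psi_{i}^{\T}(\thetav - \thetavs)\expzeta_{i}/2$, so that $2\lambda/\expzeta_{i} = \mu \Psi_{i}^{\T}(\thetav - \thetavs)$; the assumption $\mu|\Psi_{i}^{\T}(\thetav - \thetavs)|\expzeta_{i} \le \lambdab_{1}$ then guarantees $|\lambda| \le \lambdab_{1}$ and hence a finite bound. Each factor is finite and strictly positive, so $\E\exp\{\LL(\thetav,\thetavs)\}$ is finite and positive and $\Lmgf(\thetav,\thetavs) = -\log\E\exp\{\LL(\thetav,\thetavs)\}$ is finite, which is exactly $(E)$.

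For $(E\!D)$ I would first note that $\zeta(\thetav)$ is linear in $\thetav$, so its gradient $\nabla\zeta = \mu\sum_{i}(Y_{i}-\mY_{i})\Psi_{i}$ is independent of $\thetav$ and the inner supremum over $\thetav$ in (\ref{expzetac}) is trivial. Taking $V(\cdot) \equiv V = \mu^{2} V_{1}$ from (\ref{Vexpglm}), the factors of $\mu$ cancel in the normalisation and, by the definition of $c_{i}(\gammav)$, one gets $\gammav^{\T}\nabla\zeta/\sqrt{\gammav^{\T} V \gammav} = \sum_{i} c_{i}(\gammav)(Y_{i}-\mY_{i})/\expzeta_{i}$. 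Factorising the exponential moment over $i$ and applying (\ref{expzetai}) with $\lambda c_{i}(\gammav)$ in place of $\lambda$ — legitimate under the hypothesis $|\lambda c_{i}(\gammav)| \le \lambdab_{1}$ — yields the bound $\log \E \exp\{\,\cdot\,\} \le 2\lambda^{2}\sum_{i} c_{i}(\gammav)^{2}$.

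The single computation that closes the argument is the normalisation identity
\[
    \sum_{i} c_{i}(\gammav)^{2}
    = \frac{\sum_{i} \expzeta_{i}^{2} (\gammav^{\T}\Psi_{i})^{2}}{\gammav^{\T} V_{1} \gammav}
    = 1,
\]
which holds for every $\gammav \in S^{p}$ straight from the definitions of $c_{i}(\gammav)$ and $V_{1}$; it makes the right-hand side equal to $2\lambda^{2}$ uniformly in $\gammav$, which is precisely $(E\!D)$ with $V(\cdot)\equiv V$. I do not expect a genuine obstacle: the argument is a direct factorisation. The only points needing care are the factor-of-two bookkeeping in (\ref{expzetai}) when selecting $\lambda$ (so that the range restriction matches the stated hypotheses), and the recognition that it is exactly the identity $\sum_{i} c_{i}(\gammav)^{2} = 1$ that removes the dependence on $\gammav$ and produces the clean constant $2\lambda^{2}$.
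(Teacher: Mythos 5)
Your proof is correct and takes essentially the same route as the paper: for \( (E\!D) \) the paper's proof is exactly your factorization over the independent \( Y_{i} \)'s, applying (\ref{expzetai}) with \( \lambda c_{i}(\gammav) \) in place of \( \lambda \) and closing with the normalization identity \( \sum_{i} c_{i}^{2}(\gammav) = 1 \). You are in fact slightly more complete than the paper, whose printed proof writes out only the \( (E\!D) \) part and leaves the verification of \( (E) \) implicit; your explicit factorization for \( (E) \), with the factor-of-two choice \( \lambda = \mu \Psi_{i}^{\T}(\thetav - \thetavs)\expzeta_{i}/2 \) so that the hypothesis guarantees \( |\lambda| \le \lambdab_{1} \), fills that gap correctly.
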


\begin{proof}
Let  \( |\lambda c_{i}(\gammav)| \le \lambdab_{1} \) for all \( i \).
Independence of the \( Y_{i} \)'s and (\ref{expzetai}) imply
in view of \( \sum_{i} c_{i}^{2}(\gammav) = 1 \) that
\begin{eqnarray*}
    &&
    \log \E \exp\biggl\{
        2 \lambda \frac{\gammav^{\T} \nabla \zeta}{(\gammav^{\T} V \gammav)^{1/2}}
        - 2 \lambda^{2} 
    \biggr\}
%    \nn && \qquad 
    =
    \sum_{i} \log \E \exp\Bigl\{
        2 \lambda c_{i}(\gammav)
        \frac{Y_{i} - \mY_{i}}{\expzeta_{i}}
    \Bigr\}
    - 2 \lambda^{2}
%    \sum_{i} 2 \mu^{2} \expzeta^{2}_{i} \lambda^{2} \gammav^{\T} \Psi_{i} \Psi_{i}^{\T} \gammav
    \le
    0 .
\end{eqnarray*}
This implies \( (E\!D) \) with \( V(\thetav) \equiv V \).
% in view of \( V \ge I \).
\end{proof}

An important feature of the GLM is that the gradient
\( \nabla \zeta(\thetav) \) and hence, the corresponding matrix
\( V(\thetav) \) do not depend on \( \thetav \).
This automatically yields the condition \( (V) \) with \( \nu_{1} = 1 \) and
any \( \reps > 0 \).
%suggesting the penalty function in the form
%\( \pen(\thetav) = p^{*} \log_{+} \bigl( \| V^{1/2} (\thetav - \thetavs) \| + \reps \bigr) 
%\) for \( p^{*} > p \).

%
Now we consider the rate function
\( \Lmgf(\thetav,\thetavs) \eqdef - \log \E \exp\{ \LL(\thetav,\thetavs) \} \).
%As
%\( \log E_{\upsilon} \exp \bigl\{ \mu Y \bigr\} = d(\upsilon + \mu) - d(\upsilon)
%\),
It holds
\begin{eqnarray}
\label{LmgfGLM}
    \Lmgf(\thetav,\thetavs)
    &=&
    \mu \sum_{i} \bigl\{
%        d(\fu_{i})
%        - d\bigl( \fu_{i} + \mu \Psi_{i}^{\T}\thetav - \mu \Psi_{i}^{\T}\thetavs \bigr)
%        +
         d(\Psi_{i}^{\T}\thetav) - d(\Psi_{i}^{\T}\thetavs)
        - \Psi_{i}^{\T}(\thetav - \thetavs) \mY_{i}
    \bigr\}
    \nn
    && \qquad - \,
    \sum_{i} \log \E \exp\bigl\{
        \mu \Psi_{i}^{\T}(\thetav - \thetavs) \bigl( Y_{i} - \mY_{i} \bigr)
    \bigr\}.
\end{eqnarray}
This function is smooth in \( \thetav \) and by (\ref{extremeexp})
\begin{eqnarray*}
    \nabla \Lmgf(\thetav,\thetavs) \big|_{\thetav = \thetavs}
    &=&
    \mu \sum_{i}\bigl\{
        \dot{d}(\Psi_{i}^{\T}\thetavs) - \mY_{i}
%        - \dot{d}(\fu_{i} + \mu \Psi_{i}^{\T}\thetav - \mu \Psi_{i}^{\T}\thetavs)
    \bigr\}  \Psi_{i}
%    \big|_{\thetav = \thetavs}
%    \nn
%    &=&
    =
    0.
\end{eqnarray*}
Moreover,
\begin{eqnarray*}
    \nabla^{2} \Lmgf(\thetav,\thetavs)
    &=&
    \mu \sum_{i} \ddot{d}(\Psi_{i}^{\T}\thetav) \Psi_{i} \Psi_{i}^{\T}
    - \mu^{2} \sum_{i} s_{i}^{2}(\thetav) \Psi_{i} \Psi_{i}^{\T}
%    \le V.
\end{eqnarray*}
where with \( \xi_{i} = Y_{i} - \mY_{i} \) and \( \uv = \thetav - \thetavs \)
\begin{eqnarray*}
    s_{i}(\thetav)
    \eqdef
    \left( \E \ex^{ \mu \Psi_{i}^{\T} \uv \xi_{i} } \right)^{-2}
    \left\{
        \E \xi_{i}^{2}\ex^{ \mu \Psi_{i}^{\T}\uv \xi_{i} }
        \E \ex^{ \mu \Psi_{i}^{\T}\uv \xi_{i} }
        -
        \left( \E \xi_{i} \ex^{ \mu \Psi_{i}^{\T}\uv \xi_{i} } \right)^{2}
    \right\}
\end{eqnarray*}
Particularly, \( s_{i}^{2}(\thetavs) = s_{i}^{2} = \Var Y_{i} \).
If \( \uv = \thetav - \thetavs \) is small then \( s_{i}(\thetav) \) is close to
\( s_{i} \).
%\( s_{i}^{2} = \ddot{d}(\fu_{i}) \)
The ``identifiability'' condition which would provide the concentration
property from Theorem~\ref{CLDboundMT} means that for some fixed positive constants
\( \mu \) and \( \fis \)
\begin{eqnarray}
\label{LmgfGLMu}
    \Lmgf(\thetav,\thetavs)
    \ge
    \fis^{2} (\thetav - \thetavs)^{\T} V (\thetav - \thetavs)
\end{eqnarray}
at least for all \( \thetav \) from a vicinity of \( \thetavs \).
The next lemma presents some simple sufficient conditions.
\begin{lemma}
\label{LexpGLM}
Let for a subset \( \Theta_{0} \subseteq \Theta \) hold:
\begin{eqnarray*}
    \frac{1}{2} 
    \sum_{i} \ddot{d}(\Psi_{i}^{\T}\thetav) \, \Psi_{i} \Psi_{i}^{\T}
    \ge
    2 \fis_{1} V_{1} \, ,
    \qquad
    \frac{1}{2} 
    \sum_{i} s_{i}^{2}(\thetav) \Psi_{i} \Psi_{i}^{\T}
    \le
    \fis^{2} V_{1} \, ,
    \qquad
    \thetav \in \Theta_{0} ,
\end{eqnarray*}
for some positive constants \( \fis_{1} = \fis_{1}(\Theta_{0}) \),
\( \fis = \fis(\Theta_{0}) \).
Then (\ref{LmgfGLMu}) is fulfilled with any \( \mu \) satisfying 
\( \mu \le \fis_{1}/\fis^{2} \).
\end{lemma}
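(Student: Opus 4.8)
The plan is to reduce the inequality (\ref{LmgfGLMu}) to a pointwise lower bound on the Hessian \( \nabla^{2}\Lmgf(\thetav,\thetavs) \) and then integrate it via a second-order Taylor expansion around \( \thetavs \). Since it has already been established that \( \Lmgf(\thetavs,\thetavs)=0 \) and \( \nabla\Lmgf(\thetav,\thetavs)\big|_{\thetav=\thetavs}=0 \), and since \( \Lmgf(\thetav,\thetavs) \) is smooth in \( \thetav \), for any \( \thetav \) whose whole segment \( \{\thetavs + t(\thetav-\thetavs): t\in[0,1]\} \) lies in \( \Theta_{0} \) one has the integral Taylor remainder
\[
    \Lmgf(\thetav,\thetavs)
    =
    \int_{0}^{1} (1-t)\,(\thetav-\thetavs)^{\T}
        \nabla^{2}\Lmgf\bigl(\thetavs + t(\thetav-\thetavs),\thetavs\bigr)(\thetav-\thetavs)\,dt .
\]
Thus it suffices to show \( \nabla^{2}\Lmgf(\thetav,\thetavs) \ge 2\fis^{2} V \) as a quadratic form, uniformly over \( \thetav \in \Theta_{0} \); the factor \( \int_{0}^{1}(1-t)\,dt = 1/2 \) then produces exactly the constant \( \fis^{2} \) in front of \( V \) required by (\ref{LmgfGLMu}).

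For the Hessian bound I would use the explicit decomposition already derived, namely \( \nabla^{2}\Lmgf(\thetav,\thetavs) = \mu\sum_{i}\ddot{d}(\Psi_{i}^{\T}\thetav)\Psi_{i}\Psi_{i}^{\T} - \mu^{2}\sum_{i}s_{i}^{2}(\thetav)\Psi_{i}\Psi_{i}^{\T} \), which cleanly separates the deterministic ``signal'' curvature coming from the convex function \( d \) from the ``stochastic'' curvature \( s_{i}^{2}(\thetav) \), the latter being a tilted-measure variance and hence nonnegative. I then insert the two hypotheses of the lemma: the lower bound gives \( \sum_{i}\ddot{d}(\Psi_{i}^{\T}\thetav)\Psi_{i}\Psi_{i}^{\T} \ge 4\fis_{1}V_{1} \) and the upper bound gives \( \sum_{i}s_{i}^{2}(\thetav)\Psi_{i}\Psi_{i}^{\T} \le 2\fis^{2}V_{1} \), both as quadratic forms for \( \thetav\in\Theta_{0} \). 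Combining these yields \( \nabla^{2}\Lmgf(\thetav,\thetavs) \ge (4\mu\fis_{1} - 2\mu^{2}\fis^{2})\,V_{1} \).

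Recalling \( V = \mu^{2}V_{1} \) from (\ref{Vexpglm}), the desired bound \( \nabla^{2}\Lmgf(\thetav,\thetavs) \ge 2\fis^{2} V = 2\mu^{2}\fis^{2}V_{1} \) reduces to the scalar inequality \( 4\mu\fis_{1} - 2\mu^{2}\fis^{2} \ge 2\mu^{2}\fis^{2} \), i.e.\ \( \fis_{1} \ge \mu\fis^{2} \), which is precisely the admissibility condition \( \mu \le \fis_{1}/\fis^{2} \). Feeding this back into the Taylor remainder closes the chain and gives (\ref{LmgfGLMu}).

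The only genuine subtlety — and the point I would be careful to state — is the geometry implicit in the Taylor argument: the two curvature bounds are assumed only on \( \Theta_{0} \), so the expansion is valid precisely for those \( \thetav \) whose segment to \( \thetavs \) stays inside \( \Theta_{0} \) (for instance whenever \( \Theta_{0} \) is convex and contains \( \thetavs \)). Everything else is routine and no empirical-process machinery is needed here, since \( \Lmgf(\thetav,\thetavs) \) is an explicit deterministic function. It is worth remarking that the two hypotheses play asymmetric roles: the convexity of \( d \) supplies the positive curvature while the exponential-moment spread \( s_{i}^{2}(\thetav) \) only erodes it, which is exactly why \( \mu \) must be taken small enough to keep the stochastic correction dominated.
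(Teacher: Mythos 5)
Your proposal is correct and follows essentially the same route as the paper's own proof: lower-bound the Hessian \( \nabla^{2} \Lmgf(\thetav,\thetavs) \ge 2\fis^{2} V \) on \( \Theta_{0} \) by inserting the two curvature hypotheses into the decomposition \( \mu\sum_{i}\ddot{d}(\Psi_{i}^{\T}\thetav)\Psi_{i}\Psi_{i}^{\T} - \mu^{2}\sum_{i}s_{i}^{2}(\thetav)\Psi_{i}\Psi_{i}^{\T} \), reduce the admissibility of \( \mu \) to the scalar inequality \( \mu \le \fis_{1}/\fis^{2} \), and conclude by a second-order Taylor expansion at \( \thetavs \). Your version is in fact slightly more careful than the paper's, since you make explicit the integral remainder form and the requirement that the segment from \( \thetavs \) to \( \thetav \) stay inside \( \Theta_{0} \), a point the paper leaves implicit.
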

\begin{proof}
With \( \mu \le \fis_{1} / \fis^{2} \) for any \( \thetav \in \Theta_{0} \)
\begin{eqnarray*}
    \frac{1}{2} 
    \nabla^{2} \Lmgf(\thetav,\thetavs)
    \ge
    2 \mu \fis_{1} V_{1} - \fis^{2} \mu^{2} V_{1}
    =
    (2 \mu^{-1} \fis_{1} - \fis^{2}) V
    \ge 
    \fis^{2} V  .
\end{eqnarray*}
Now the result follows by the second order Taylor expansion of
\( \Lmgf(\thetav,\thetavs) \) at \( \thetav = \thetavs \).
\end{proof}

Now we are ready to state the main result for the GLM estimation problem
which is a specification of Theorems~\ref{Tmainboundranking} and \ref{CLDboundMT2}.

\begin{theorem}
\label{TGLMexp}
Suppose that the \( Y_{i} \)'s are independent and
the point \( \thetavs \) is defined by (\ref{thetavsglm}).
Let there exist \( \lambdab_{1} > 0 \) and the values
\( \expzeta_{i} \) such that (\ref{expzetai}) is fulfilled.
Let, additionally, for some \( \mu > 0 \) and all \( \thetav \in \Theta \)
\begin{eqnarray*}
    \mu \expzeta_{i} |\Psi_{i}^{\T} (\thetav - \thetavs)| \le \lambdab_{1},
    \qquad
    i \le n  ,
\end{eqnarray*}
and with the matrices \( V, V_{1} \) from (\ref{Vexpglm}) and some \( \lambdab > 0 \):
\begin{eqnarray*}
    \sup_{\gammav \in S^{p}}
    \lambdab 
    \frac{\expzeta_{i} \bigl| \gammav^{\T} \Psi_{i} \bigr|}{(\gammav^{\T} V_{1} \gammav)^{1/2}} \le \lambdab_{1},
    \qquad
    i \le n  .
\end{eqnarray*}    
Fix any \( \mrho < 1 \) and \( \reps > 0 \) with 
\( \mrho \reps/(1- \mrho) \le \lambdab \). 
Define \( \penInt^{*} \) and \( \pen(\thetav) \) by (\ref{penIntdef}) and 
(\ref{penpenb}).
Then 
\begin{eqnarray*}
    &&
    \log \E \exp\Bigl\{
        \sup_{\thetav \in \Theta} \mrho \bigl[
                \LL(\thetav,\thetavs) + \Lmgf(\thetav,\thetavs)
                - \pen(\thetav)
        \bigr]
    \Bigr\}
    \nn
    &&
    \qquad \le 
    2 \reps^{2} \mrho^{2}/(1 - \mrho)
    + (1 - \mrho) \entr_{p}
    + \log (\penInt^{*}) .
\end{eqnarray*}    
Let also there exist \( \fis > 0 \)
%and \( \rr > 0 \) 
such that the function \( \Lmgf(\thetav,\thetavs) \) from (\ref{LmgfGLM}) fulfills
\begin{eqnarray*}
%\label{ratesttglm}
    \Lmgf(\thetav,\thetavs)
    \ge
    \fis^{2} (\thetav - \thetavs)^{\T} V (\thetav - \thetavs) .
%    \qquad
%    \thetav \in \cc{A}(\rr,\thetavs).
\end{eqnarray*}
%for all \( \thetav \) with \( \Lmgf(\thetav,\thetavs) \le \rr \).
Then for \( \srho = 1 - \fiss^{2}/\fis^{2} \)
\begin{eqnarray*}
    \P\bigl( 
        \Lmgf(\tilde{\thetav},\thetavs) > \zz 
    \bigr)
    & \le &
    \Crlq(\mrho,\srho) 
    \ex^{- \mrho \srho \zz } ,
    \nn
    \log \Crlq(\mrho,\srho)
    & \le &
    p C(\mrho)
    + p \log \bigl( |\fis^{2} (1 - \srho) (1 - \mrho)|^{-1/2} \bigr)
\end{eqnarray*}    
and for the confidence set 
\( \cc{E}(\zz) = \{ \thetav: \LL(\tilde{\thetav},\thetav) \le \zz \} \) holds
with \( \Crlq(\mrho) = \Crlq(\mrho,0) \)
\begin{eqnarray*}
    \P\bigl( \thetavs \not\in \cc{E}(\zz) \bigr)
    \le 
    \Crlq(\mrho) \ex^{- \mrho \zz  } .
\end{eqnarray*}
\end{theorem}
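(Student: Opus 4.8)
The plan is to assemble the statement from the abstract results of Section~\ref{Sriskboundex}, the only genuine work being the verification of the structural conditions for the generalized linear model; these are already supplied by Lemma~\ref{Lexpglm}. First I would check conditions \( (E) \) and \( (E\!D) \). The assumption \( \mu \expzeta_{i} |\Psi_{i}^{\T}(\thetav - \thetavs)| \le \lambdab_{1} \) for all \( i \) and all \( \thetav \) is exactly the hypothesis of Lemma~\ref{Lexpglm} guaranteeing \( (E) \). For \( (E\!D) \), the second displayed assumption reads \( \lambdab \, |c_{i}(\gammav)| \le \lambdab_{1} \) with \( c_{i}(\gammav) = \gammav^{\T}\Psi_{i}\expzeta_{i}(\gammav^{\T} V_{1}\gammav)^{-1/2} \); since \( |c_{i}(\gammav)| \le 1 \), this extends to \( |\lambda c_{i}(\gammav)| \le \lambdab_{1} \) for \emph{all} \( |\lambda| \le \lambdab \), so Lemma~\ref{Lexpglm} yields \( (E\!D) \) with this \( \lambdab \) and \( V(\thetav) \equiv V \).

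The decisive simplification is that \( V(\thetav) \equiv V \) does not depend on \( \thetav \): condition \( (V) \) then holds with \( \nu_{1} = 1 \) and any \( \reps > 0 \). I would therefore apply Theorem~\ref{Tmainboundranking} with \( V^{*} = V \) and the penalty \( \pen(\thetav) \) from (\ref{penpenb}), whose normalizing integral \( \penInt^{*} \) is finite by (\ref{penIntdef}). Because \( \nu_{1} = 1 \), the constant in that theorem collapses to \( \log \Crlq(\mrho) = 2\reps^{2}\mrho^{2}/(1-\mrho) + (1-\mrho)\entr_{p} + \log \penInt^{*} \), which is precisely the first displayed bound. The constraint \( \mrho \reps/(1-\mrho) \le \lambdab \) is imposed in the hypotheses, so all requirements of Theorem~\ref{Tmainboundranking} are met.

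For the second group of conclusions I would pass to the quadratic lower bound. The additional assumption \( \Lmgf(\thetav,\thetavs) \ge \fis^{2}(\thetav - \thetavs)^{\T} V (\thetav - \thetavs) \) is exactly hypothesis (\ref{Lmgfquad}) of Theorem~\ref{CLDboundMT2} with \( V^{*} = V \); since \( (E) \) and \( (E\!D) \) with \( V(\thetav) \le V^{*} = V \) are already in force, I invoke that theorem directly. It fixes some \( \fiss \le \fis \), employs the quadratic penalty (\ref{pentid}) (the Gaussian choice \( \penb_{1} \) in disguise), sets \( \srho = 1 - \fiss^{2}/\fis^{2} \), and—because the quadratic penalty controls \( \pnn(\rr) \)—yields through Corollaries~\ref{CLDboundR} and~\ref{CLDboundCS} both the concentration bound \( \P(\Lmgf(\tilde{\thetav},\thetavs) > \zz) \le \Crlq(\mrho,\srho)\,\ex^{-\mrho\srho\zz} \) and the coverage bound \( \P(\thetavs \notin \cc{E}(\zz)) \le \Crlq(\mrho,0)\,\ex^{-\mrho\zz} \), together with the stated estimate for \( \log \Crlq(\mrho,\srho) \).

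I do not expect a serious obstacle: the hard analytic content—the exponential moment bound for the rescaled gradient and the covering argument behind Theorems~\ref{Tmainbound} and~\ref{Tmainboundranking}—is entirely carried by the earlier results. The only points demanding care are bookkeeping ones: tracking the scaling \( V = \mu^{2} V_{1} \) so that \( |c_{i}(\gammav)| \le 1 \) and the \( (E\!D) \) bound extends to the whole range \( |\lambda| \le \lambdab \), and remembering that the two halves of the statement rest on \emph{different} penalties—the general penalty (\ref{penpenb}) for the penalized exponential bound, and the specialized quadratic penalty (\ref{pentid}) for the concentration and coverage assertions.
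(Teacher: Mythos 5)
Your proposal is correct and matches the paper's own route exactly: the paper offers no separate proof of Theorem~\ref{TGLMexp}, presenting it as a direct specification of Theorems~\ref{Tmainboundranking} and~\ref{CLDboundMT2} once Lemma~\ref{Lexpglm} verifies \( (E) \) and \( (E\!D) \) with the constant matrix \( V(\thetav) \equiv V \) (hence \( (V) \) with \( \nu_{1}=1 \)), which is precisely your argument, including the observation that the two halves use different penalties. The only trivial imprecision is your appeal to \( |c_{i}(\gammav)| \le 1 \) when extending the \( (E\!D) \) bound to all \( |\lambda| \le \lambdab \); what is actually used is monotonicity, \( |\lambda c_{i}(\gammav)| \le \lambdab |c_{i}(\gammav)| \le \lambdab_{1} \), which your hypotheses already give.
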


\section{Single-index regression}
\label{SexpSI}
In this section we illustrate the general results of Sections~\ref{Sriskboundex}
and \ref{Scorolexp} by the problem of estimating the index vector \( \thetav \)
in the so called single-index regression model.
Such models are frequently used in statistical modeling to overcome the
``curse of dimensionality'' problem, see \citeasnoun{stone1986}.

Let \( \Yn = (Y_{1},\ldots,Y_{n}) \) be an observed sample. We assume that
the \( Y_{i} \)'s are independent and the distribution of every \( Y_{i} \)
belongs to an exponential family \( \cc{P} \) with canonical parametrization:
\begin{eqnarray}
\label{modelnrd}
    Y_{i} \sim P_{\fu_{i}} \, ,
\end{eqnarray}
where the underlying parameter \( \fu_{i} \) can be different for each \( i \).
Regression analysis aims at explaining this parameter \( \fu_{i} \) as a function
of the explanatory vector \( X_{i} \in \R^{d} \):
\( \fu_{i} = f(X_{i}) \) for some \emph{regression function} \( f(\cdot) \).
We again consider a deterministic design \( X_{1},\ldots,X_{n} \).
The assumption \( \fu_{i} = f(X_{i}) \) reduces the original problem to
recovering the regression function \( f(\cdot) \) from the observed data.
However, in the case of a large \( d \) this problem is too complex because of
the design sparsity. This ``curse of dimensionality'' problem can be
avoided by some dimensionality reduction assumption.
Below we consider one possible assumption of this sort:
\begin{eqnarray}
\label{singleind}
    f(X_{i})
    =
    g(X_{i}^{\T} \thetav)
\end{eqnarray}
where \( g(\cdot) \) is a univariate \emph{link function}, while
\( \thetav \in \R^{d} \) is an \emph{index} vector.
This assumption effectively means that the explanatory vector \( X_{i} \) can
be projected on the index \( \thetav \) and this projection can be used
instead of the original vector without any information loss.
Therefore, the primary goal in estimation of a single index model is in recovering the
index vector \( \thetav \).
There is a number of results in the literature about the quality of estimation
of the index vector \( \thetav \).
We mention
\citeasnoun{Li1991},
\citeasnoun{ichi1993},
\citeasnoun{hallichi1993},
\citeasnoun{hris2001},
\citeasnoun{xia2002},
\citeasnoun{hart2002},
\citeasnoun{hris2003},
\citeasnoun{cook2008} among many others.

Below we assume that the link function \( g \) is given and it is sufficiently
smooth.
Note however, that the underlying model just follows
(\ref{modelnrd}). The considered problem is the problem of the best parametric
approximation of the function \( \fu(x) \) by  single index function
\( g(x^{\T} \thetav) \) with a fixed link function \( g(\cdot) \).
Such problem often occurs as an important building block in popular statistical
procedures like logit regression, projection pursuit of neuronal networks.
Note that for a linear link function \( g(\cdot) \) we come back to
generalized linear estimation.

%After the index vector is estimated with a
%sufficient precision, the link function can be easily recovered using the
%classical univariate smoothing technique.

%We first consider the case when the link function \( g \) is given
%and sufficiently smooth.
%Then we discuss the case when it is estimated using a preliminary estimate of
%the index vector.

%\subsection{Estimation for a given link function}
%This section discusses the problem of estimating the index vector for the model
%(\ref{singleind}) with a given link function \( g(\cdot) \).

The parametric assumption \( \fu(X_{i}) = g(X_{i}^{\T}\thetav) \)
leads to the log-likelihood
\( L(\thetav) = \sum_{i} \ell(Y_{i},g(X_{i}^{\T}\thetav)) \) where
\( \ell(y,\upsilon) = y \upsilon - d(\upsilon) + \ell(y) \) is the log-likelihood
function for \( \cc{P} \):
\begin{eqnarray}
\label{Lthetasi}
    L(\thetav) = \sum_{i} \ell(Y_{i},g(X_{i}^{\T}\thetav))
    =
    \sum_{i} \bigl\{
        Y_{i} g(X_{i}^{\T}\thetav) - d\bigl( g(X_{i}^{\T}\thetav) \bigr)
        + \ell(Y_{i})
    \bigr\} .
\end{eqnarray}
For some \( \mu > 0 \) whose value will be specified later, define
\begin{eqnarray*}
    \LL(\thetav)
    \eqdef
    \mu \sum_{i} \bigl\{
        Y_{i} g(X_{i}^{\T}\thetav) - d\bigl( g(X_{i}^{\T}\thetav) \bigr)
    \bigr\} .
\end{eqnarray*}    
We use the well known properties of the canonical exponential families:
\( E_{\upsilon} Y = \dot{d}(\upsilon)  \) which implies
\begin{eqnarray*}
    \E \LL(\thetav)
    &=&
    \mu \sum_{i} \bigl\{
        \dot{d}(\fu_{i}) g(X_{i}^{\T}\thetav) - d(g(X_{i}^{\T}\thetav))
    \bigr\},
    \\
    \nabla \E \LL(\thetav)
    &=&
    \mu \sum_{i} \bigl\{
        \dot{d}(\fu_{i}) - \dot{d}(g(X_{i}^{\T}\thetav))
    \bigr\} \dot{g}(X_{i}^{\T}\thetav) X_{i} \, .
%    \\
%    \nabla^{2} \E L(\thetav)
%    &=&
%    \sum_{i} \bigl\{
%        [\dot{d}(\fu_{i}) - \dot{d}(g(X_{i}^{\T}\thetav)) ]
%    \ddot{g}(X_{i}^{\T}\thetav)
%    - \ddot{d}(g(X_{i}^{\T}\thetav)) \bigl| \dot{g}(X_{i}^{\T}\thetav)) \bigr|^{2}
%    \bigr\} X_{i} X_{i}^{\T} .
\end{eqnarray*}
The target \( \thetavs \) maximizes \( \E \LL(\thetav) \):
\begin{eqnarray}
\label{thetavsmr}
    \thetavs
    =
    \argmax_{\thetav} \E \LL(\thetav)
    =
    \argmax_{\thetav}
    \sum_{i} \bigl\{
        \dot{d}(\fu_{i}) g(X_{i}^{\T}\thetav) - d(g(X_{i}^{\T}\thetav))
    \bigr\}.
\end{eqnarray}
This particularly yields
\begin{eqnarray*}
%\label{extremeexp}
    \nabla \E \LL(\thetavs)
    =
    0.
%    \qquad
%    \nabla^{2} \E L(\thetavs) \le 0.
%\label{extremeexpd}
\end{eqnarray*}
Next, for every \( \thetav \)
\begin{eqnarray*}
    \zeta(\thetav)
    & \eqdef &
    \LL(\thetav) - \E \LL(\thetav)
    =
    \mu \sum_{i}  \bigl\{  Y_{i} - \dot{d}(\fu_{i}) \bigr\} g(X_{i}^{\T}\thetav) ,
    \nn
    \nabla \zeta(\thetav)
    &=&
    \mu \sum_{i}  \bigl\{  Y_{i} - \dot{d}(\fu_{i}) \bigr\} \dot{g}(X_{i}^{\T}\thetav) X_{i},
\end{eqnarray*}
It is easy to see that condition \( (E) \) is fulfilled
if \( \fu_{i} + \mu \bigl\{ g(X_{i}^{\T}\thetav) - g(X_{i}^{\T}\thetavs) \bigr\} \in \cc{U} \)
for all \( i \) and all \( \thetav \in \Theta \).
Let \( \expzeta(\upsilon) \) be a function of \( \upsilon \)
which ensures for some fixed \( \lambdab_{1} > 0 \) that
\begin{eqnarray}
\label{expzetasi}
    \log E_{\upsilon} \exp\Bigl\{
        2 \lambda \frac{Y - \dot{d}(\upsilon)}{\expzeta(\upsilon)}
    \Bigr\}
    \le
    2 \lambda^{2},
    \qquad
    \lambda \le \lambdab_{1} \, .
\end{eqnarray}
Define
\begin{eqnarray}
\label{Vthetasi}
    V_{1}(\thetav)
    \eqdef
    \sum_{i} \expzeta^{2}(\fu_{i}) \,
        \bigl| \dot{g}(X_{i}^{\T}\thetav) \bigr|^{2} X_{i} X_{i}^{\T} ,
    \qquad 
    V(\thetav) 
    \eqdef 
    \mu^{2} V_{1}(\thetav).
\end{eqnarray}
%As in the GLM case, we assume that \( V(\thetav) \ge I \).
Then for any \( \gammav \in S^{d} \)
\begin{eqnarray*}
    \E \exp\biggl\{ 
        2 \lambda 
        \frac{\gammav^{\T} \nabla \zeta(\thetav)}
             {(\gammav^{\T} V(\thetav) \gammav)^{1/2}}
        - 2 \lambda^{2} 
    \biggr\}
    \le
    1
\end{eqnarray*}
provided that \( \expzeta(\fu_{i}) |\lambda \gammav^{\T} X_{i}| \le \lambdab_{1} \)
for all \( i \), which implies \( (E\!D) \).

Now we consider the rate function \( \Lmgf(\thetav,\thetavs) \).
As
\( \log E_{\upsilon} \exp \bigl\{ \mu Y \bigr\} = d(\upsilon + \mu) - d(\upsilon)
\), it holds
\begin{eqnarray*}
    \Lmgf(\thetav,\thetavs)
    & \eqdef &
    - \log \E \exp\{ \LL(\thetav,\thetavs) \}
    \nn
    &=&
    \sum_{i}\Bigl\{
        d(\fu_{i})
        - d\bigl( \fu_{i} + \mu \delta_{i}(\thetav) \bigr)
    + \mu d(g(X_{i}^{\T}\thetav)) - \mu d(g(X_{i}^{\T}\thetavs))
    \Bigr\}
\end{eqnarray*}
with  \( \delta_{i}(\thetav) \eqdef g(X_{i}^{\T}\thetav) - g(X_{i}^{\T}\thetavs)
\). For the gradient \( \nabla \Lmgf(\thetav,\thetavs) \) holds:
\begin{eqnarray*}
    \nabla \Lmgf(\thetav,\thetavs)
    %\big|_{\thetav = \thetavs}
    =
    \sum_{i}\Bigl\{
        \dot{d}\bigl( g(X_{i}^{\T}\thetav) \bigr)
        - \dot{d}\bigl( \fu_{i} + \mu \delta_{i}(\thetav) \bigr)
    \Bigr\} \mu \dot{g}(X_{i}^{\T}\thetav) X_{i} \, .
    %\big|_{\thetav = \thetavs}
\end{eqnarray*}
The equation \( \nabla \E \LL(\thetavs) = 0 \) implies
\( \nabla \Lmgf(\thetav,\thetavs) \big|_{\thetav = \thetavs} = 0 \).
Moreover,
\begin{eqnarray*}
    &&
    \nabla^{2} \Lmgf(\thetav,\thetavs)
    %\big|_{\thetav = \thetavs}
    = 
    \frac{\partial^{2} \Lmgf(\thetav,\thetavs)}{\partial \thetav^{2}}
    \nn
    && \quad =
    \sum_{i} \mu \Bigl\{
         \ddot{d}(g(X_{i}^{\T}\thetav)) 
        \bigl| \dot{g}(X_{i}^{\T}\thetav) \bigr|^{2}
%    \nn && \qquad  + \, 
    + \mu \bigl[ 
        \dot{d}(g(X_{i}^{\T}\thetav)) - \dot{d}(\fu_{i} - \mu \delta_{i}(\thetav)) 
    \bigl] \ddot{g}(X_{i}^{\T}\thetav)
    \Bigr\} X_{i} X_{i}^{\T} 
    \nn && \qquad - \, 
    \mu^{2} \sum_{i} \Bigl\{
        \ddot{d}(\fu_{i} + \mu \delta_{i}(\thetav)) 
        \bigl| \dot{g}(X_{i}^{\T}\thetav) \bigr|^{2}
    \Bigr\} X_{i} X_{i}^{\T} .
\end{eqnarray*}
The ``identifiability'' condition which would provide the concentration
property of \( \tilde{\thetav} \) in an elliptic neighborhood of the point
\( \thetavs \) means that
\begin{eqnarray*}
    n^{-1} \sum_{i}  \Bigl\{
        \ddot{d}(g(X_{i}^{\T}\thetav)) \bigl| \dot{g}(X_{i}^{\T}\thetav) \bigr|^{2}
        + \bigl[ \dot{d}(g(X_{i}^{\T}\thetav)) - \dot{d}(\fu_{i}) \bigr]
          \ddot{g}(X_{i}^{\T}\thetav)
    \Bigr\} X_{i} X_{i}^{\T}
    \ge
    D_{1}^{2}
\end{eqnarray*}
for a positive matrix \( D_{1}^{2} \). This condition ensures that with a proper
choice of \( \mu \), the value
\( \Lmgf(\thetav,\thetavs) \) satisfies
\( \Lmgf(\thetav,\thetavs) \ge C (\thetav - \thetavs)^{\T} D_{1}^{2} (\thetav - \thetavs) \)
for some \( C = C(D_{1}) > 0 \).

In the case of Gaussian regression \( Y_{i} \sim \cc{N}(\fu_{i},\sigma^{2}) \), it holds
\( d(\upsilon) = \upsilon^{2}/(2\sigma^{2}) \), so that
\( \dot{d}(\upsilon) = \upsilon/\sigma^{2} \) and
\( \ddot{d}(\upsilon) = \sigma^{-2} \). The identifiability condition reads now as
\begin{eqnarray*}
    (n \sigma^{2})^{-1} \sum_{i}  \Bigl\{
        \bigl| \dot{g}(X_{i}^{\T}\thetav) \bigr|^{2}
        + \bigl[ g(X_{i}^{\T}\thetav) - \fu_{i} \bigr] \ddot{g}(X_{i}^{\T}\thetav)
    \Bigr\} X_{i} X_{i}^{\T}
    \ge
    D_{1}^{2} .
\end{eqnarray*}
\begin{theorem}
\label{Tmainsi}
Suppose that \( Y_{i} \sim P_{\fu_{i}} \in \cc{P} \)  for some EFC \( \cc{P} \).
Let the point \( \thetavs \) be defined by (\ref{thetavsmr})
and \( \tilde{\thetav} = \argmax_{\thetav} \LL(\thetav) \) be its estimate.
Let also there exist \( \lambdab_{1} > 0 \) and the function
\( \expzeta(\upsilon) \) such that (\ref{expzetasi}) is fulfilled.
Let also for some \( \mu^{*} > 0 \)
%for all \( \thetav \in \Theta \) and all \( i \le n \)
\begin{eqnarray*}
    \fu_{i} + \mu^{*} \bigl\{ g(X_{i}^{\T}\thetav) - g(X_{i}^{\T}\thetavs) \bigr\} 
    \in \cc{U},
    \qquad 
    i \le n, \,\,
    \thetav \in \Theta,
\end{eqnarray*}
and for some \( \lambdab > 0 \) and the matrix \( V_{1}(\thetav) \) from (\ref{Vthetasi})
\begin{eqnarray*}
    \sup_{\gammav \in S^{p}}
    \frac{\expzeta(\fu_{i}) \bigl| \gammav^{\T} X_{i} \bigr|}
         {(\gammav^{\T} V_{1}(\thetav) \gammav)^{1/2}}
    \, \lambdab
    \le
    \lambdab_{1}     
    \qquad 
    i \le n, \,\,
    \thetav \in \Theta .
\end{eqnarray*}    
Then for any \( \mu \) with  \( 0 < \mu \le \mu^{*} \),
the conditions \( (E) \) and \( (E\!D) \)
are fulfilled with  \( V(\thetav) \) from (\ref{Vthetasi}).
For any \( \mrho < 1 \) and \( \reps > 0 \) with 
\( \mrho \reps/(1 - \mrho) \le \lambdab \), it holds
\begin{eqnarray*}
    &&
    \log \E \exp\Bigl\{
        \sup_{\thetav \in \Theta} \mrho \bigl[
                \LL(\thetav,\thetavs) + \Lmgf(\thetav,\thetavs)
                - \pen(\thetav)
        \bigr]
    \Bigr\}
    \nn
    &&
    \qquad \le 
    2 \reps^{2} \mrho^{2}/(1 - \mrho)
    + (1 - \mrho) \entr_{p}
    + \log (\penInt^{*}) ,
\end{eqnarray*}    
where \( \penInt^{*} \) and \( \pen(\thetav) \) are defined by (\ref{penIntdef}) and 
(\ref{penpenb}).

Let further there exist \( \fis > 0 \),
and a matrix \( V^{*} \) such that
%the function \( \Lmgf(\thetav,\thetavs) \) fulfills
\begin{eqnarray*}
%\label{ratesttglm}
    \Lmgf(\thetav,\thetavs)
    \ge
    \fis^{2}(\thetav - \thetavs)^{\T} D^{2} (\thetav - \thetavs),
    \qquad 
    V(\thetav) \le V^{*},
    \qquad 
    \thetav \in \Theta.
\end{eqnarray*}
Then for \( \srho = 1 - \fiss^{2}/\fis^{2} \), it holds
\begin{eqnarray*}
    \P\bigl( 
        \Lmgf(\tilde{\thetav},\thetavs) > \zz 
    \bigr)
    & \le &
    \Crlq(\mrho,\srho) 
    \ex^{- \mrho \srho \zz } ,
    \nn
    \log \Crlq(\mrho,\srho)
    & \le &
    p C(\mrho)
    + p \log \bigl( |\fis^{2} (1 - \srho) (1 - \mrho)|^{-1/2} \bigr)
\end{eqnarray*}    
and for the confidence set 
\( \cc{E}(\zz) = \{ \thetav: \LL(\tilde{\thetav},\thetav) \le \zz \} \) holds
with \( \Crlq(\mrho) = \Crlq(\mrho,0) \)
\begin{eqnarray*}
    \P\bigl( \thetavs \not\in \cc{E}(\zz) \bigr)
    \le 
    \Crlq(\mrho) \ex^{- \mrho \zz  } .
\end{eqnarray*}
\end{theorem}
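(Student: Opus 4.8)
The plan is to follow the same route as the proof of Theorem~\ref{TGLMexp}: first verify the two structural conditions $(E)$ and $(E\!D)$ for the single-index field $\LL(\thetav)$, then invoke Theorem~\ref{Tmainboundranking} to get the penalized exponential bound, and finally combine the quadratic lower bound on $\Lmgf$ with Theorem~\ref{CLDboundMT2} to extract the concentration and coverage statements.

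First I would check $(E)$. The assumption $\fu_{i} + \mu^{*}\bigl\{ g(X_{i}^{\T}\thetav) - g(X_{i}^{\T}\thetavs) \bigr\} \in \cc{U}$ for all $i$ and all $\thetav \in \Theta$, together with $0 < \mu \le \mu^{*}$ and convexity of the natural parameter domain $\cc{U}$, guarantees that $\fu_{i} + \mu \delta_{i}(\thetav)$ stays in $\cc{U}$, so the cumulant identity $\log E_{\upsilon}\exp\{\mu Y\} = d(\upsilon+\mu) - d(\upsilon)$ applies termwise. This makes the explicit expression for $\Lmgf(\thetav,\thetavs)$ displayed just before the theorem finite for every $\thetav$, which is precisely $(E)$.

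Next, and this is the step requiring the most care, I would verify $(E\!D)$ with $V(\thetav)$ from (\ref{Vthetasi}), mirroring Lemma~\ref{Lexpglm}. Writing $\nabla\zeta(\thetav) = \mu\sum_{i}\{Y_{i} - \dot{d}(\fu_{i})\}\dot{g}(X_{i}^{\T}\thetav)X_{i}$, fix $\gammav \in S^{d}$ and set
\[
    c_{i}(\gammav,\thetav)
    =
    \mu\,\expzeta(\fu_{i})\,\dot{g}(X_{i}^{\T}\thetav)\,\gammav^{\T} X_{i}\,
    \bigl(\gammav^{\T} V(\thetav)\gammav\bigr)^{-1/2},
\]
so that $\sum_{i} c_{i}^{2}(\gammav,\thetav) = 1$ by the definition of $V(\thetav)$. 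Independence of the $Y_{i}$ and the one-dimensional bound (\ref{expzetasi}) then give
\[
    \log\E\exp\Bigl\{
        2\lambda\,\frac{\gammav^{\T}\nabla\zeta(\thetav)}{(\gammav^{\T} V(\thetav)\gammav)^{1/2}}
    \Bigr\}
    =
    \sum_{i}\log E_{\fu_{i}}\exp\Bigl\{
        2\lambda\, c_{i}(\gammav,\thetav)\,\frac{Y_{i} - \dot{d}(\fu_{i})}{\expzeta(\fu_{i})}
    \Bigr\}
    \le
    2\lambda^{2},
\]
valid whenever $|\lambda\, c_{i}(\gammav,\thetav)| \le \lambdab_{1}$; the hypothesis $\sup_{\gammav}\expzeta(\fu_{i})|\gammav^{\T} X_{i}|(\gammav^{\T} V_{1}(\thetav)\gammav)^{-1/2}\,\lambdab \le \lambdab_{1}$ is exactly what forces this for all $|\lambda|\le\lambdab$, uniformly in $\thetav$. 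This yields $(E\!D)$. The main obstacle here is the bookkeeping: one must check that the $\thetav$-dependence entering through $\dot{g}(X_{i}^{\T}\thetav)$ cancels in the normalization and that the $\lambdab$-constraint is genuinely uniform in $\thetav$, since here $V(\thetav)$ truly varies with $\thetav$, unlike in the GLM case.

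Finally, using $V(\thetav) \le V^{*}$, I would apply Theorem~\ref{Tmainboundranking} with the penalty (\ref{penpenb}): condition $(V)$ holds trivially with $\nu_{1}=1$ once $V(\thetav)$ is replaced by the dominating $V^{*}$, and the monotonicity $\gammav^{\T} V(\thetav)\gammav \le \gammav^{\T} V^{*}\gammav$ preserves $(E\!D)$ under this replacement (it only rescales the effective $\lambda$ downward). This delivers the first penalized bound with the stated $\log\Crlq(\mrho) = 2\reps^{2}\mrho^{2}/(1-\mrho) + (1-\mrho)\entr_{p} + \log(\penInt^{*})$. For the concentration and coverage claims, the quadratic lower bound $\Lmgf(\thetav,\thetavs) \ge \fis^{2}(\thetav - \thetavs)^{\T} V^{*}(\thetav - \thetavs)$ on all of $\Theta$ together with $V(\thetav)\le V^{*}$ places us exactly in the hypotheses of Theorem~\ref{CLDboundMT2} with $\srho = 1 - \fiss^{2}/\fis^{2}$; invoking it (and the Corollaries~\ref{CLDboundR} and \ref{CLDboundCS} used in its proof) yields the deviation bound for $\Lmgf(\tilde{\thetav},\thetavs)$ and the coverage bound for $\cc{E}(\zz)$, completing the argument.
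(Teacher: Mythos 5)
Your proposal is correct and follows essentially the same route as the paper's (implicit) proof: the paper presents this theorem as a direct specification of Theorems~\ref{Tmainboundranking} and \ref{CLDboundMT2}, with conditions \( (E) \) and \( (E\!D) \) verified exactly as in the in-text computations of Section~\ref{SexpSI} (cumulant identity plus the \( \cc{U} \)-membership condition for \( (E) \); independence, the bound (\ref{expzetasi}), and weights \( c_{i}(\gammav,\thetav) \) summing in squares to one for \( (E\!D) \)). If anything, your write-up is more careful than the paper's own sketch, e.g.\ in making explicit that \( (E\!D) \) survives the replacement of \( V(\thetav) \) by the dominating \( V^{*} \) and in keeping the \( \dot{g}(X_{i}^{\T}\thetav) \) factor inside the normalized coefficients \( c_{i}(\gammav,\thetav) \).
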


\section{A penalized exponential bound for a random field}
\label{SriskboundG}
Let \( (\UU(\tv), \tv \in \Tv) \) be a random field on a probability space
\( (\Omega,\cc{F},\P) \), where \( \Tv \) is a separable
locally compact space.
For any \( \tv \in \Tv \) we assume the following exponential moment condition to
be fulfilled:

\begin{description}
  \item[\( \bb{(\cc{E})} \)]
 \textit{ For every \( \tv \in \Tv \)}
\begin{eqnarray*}
    \E \exp \{ \UU(\tv) \} = 1.
\end{eqnarray*}
\end{description}

The aim of this section is to establish a similar exponential bound for a
supremum of \( \UU(\tv) \) over \( \tv \in \Tv \).
A trivial corollary of the condition \( (\cc{E}) \) is that
if the set \( \Tv \) is finite with \( N = \# \Tv \),
then
\begin{eqnarray*}
    \E \exp\Bigl\{ \sup_{\tv \in \Tv} \UU(\tv) \Bigr\}
    \le
    N.
\end{eqnarray*}
Unfortunately, in the general case the supremum of \( \UU(\tv) \) over \( \tv \)
does not necessarily fulfill the condition of bounded exponential moments.
We therefore, consider a penalized version of the process \( \UU(\tv) \), that is,
we try to bound the exponential moment of \( \UU(\tv) - \pen(\tv) \) for some
\emph{penalty} function \( \pen(\tv) \). The goal is to find a possibly minimal
such function \( \pen(\tv) \) which provides
\begin{eqnarray*}
    \E \exp\Bigl\{ \sup_{\tv \in \Tv} \bigl[ \UU(\tv) - \pen(\tv) \bigr] \Bigr\}
    \le
    1.
\end{eqnarray*}
In the case of a finite set \( \Tv \), a natural candidate is
\( \pen(\tv) = \log(\#\Tv) \). Below we show how this simple choice can be
extended to the case of a general set \( \Tv \).
There exists a number of results about a supremum of a centered random field
which are heavily based on the theory of empirical processes.
See e.g. the monographes
\citeasnoun{VW1996},
\citeasnoun{sara2000},
\citeasnoun{massart2003},
and references therein.
Our approach is a bit different. First the process \( \UU(\tv) \) does not
need to be centered, instead we use the normalization
\( \E \exp \{ \UU(\tv) \} = 1 \).
Secondly we do not assume any particular structure of this process like
independence of observations, so the
methods of the empirical processes do not apply here.
Finally, our analysis is focuses on the penalty function \( \pen(\cdot) \)
rather then on the deviation probability of \( \max_{\tv} \UU(\tv) \).

\subsection{A local bound}
Define \( \ExpM(\tv) = \E \UU(\tv) \), \( \zeta(\tv) = \UU(\tv) - \E \UU(\tv)
\),
and denote \( \zeta(\tv,\tc) = \zeta(\tv) - \zeta(\tc) \) for \( \tv,\tc \in \Tv \).
We assume a nonnegative symmetric function \( \ExpL(\tv,\tc) \) is given such
that the following condition is fulfilled:
\begin{description}
\item[\( \bb{(\cc{E}\!\reps)} \)]\textit{
There exist numbers \( \reps > 0 \) and  \( \lambdab > 0 \), 
%\(\kapla = \kapla(\lambdab) \)
such that for any } \( \lambda \le \lambdab \)
\begin{eqnarray*}
    \sup_{\tv,\tc \in \Tv \, : \, \ExpL(\tv,\tc) \le \reps}
        \log \E \exp \Bigl\{
            2 \lambda \frac{\zeta(\tv,\tc)}{\ExpL(\tv,\tc)}
        \Bigr\}
    \le
    2 \lambda^{2} .
\end{eqnarray*}
\end{description}

Let \( \reps > 0 \) be shown in condition \( (\cc{E}\!\reps) \).
Define for any point \( \td \in \Tv \) the ``ball''
\begin{eqnarray*}
    \B(\reps,\td)
    =
    \bigl\{\tv: \ExpL(\tv,\td) \le \reps \bigr\}.
\end{eqnarray*}
To state the result, we have to introduce the notion of \emph{local entropy}.
We say that a discrete set \( \cc{D}(\reps,\cc{C}) \) is an \( \reps \)-net  in  \(
\cc{C} \subseteq \Tv \), if
\begin{eqnarray}
\label{localballs}
    \cc{C} \subset \bigcup_{\td \in \cc{D}(\reps,\cc{C})}  \B(\reps,\td).
\end{eqnarray}
By \( \Ns(\repsc,\reps,\td) \) for \( \repsc \le \reps \)
we denote the local covering number defined as the
minimal number of sets \( \B(\repsc,\cdot) \)  required to cover \(
\B(\reps,\td) \). With this covering number we associate the local entropy
\begin{eqnarray*}
    \entrl(\reps,\td)
    \eqdef
    \sum_{k=1}^{\infty} 2^{-k} \log \Ns(2^{-k}\reps,\reps,\td).
\end{eqnarray*}

%\newpage
Assume that \( \td \in \Tv \) is fixed. 
The following result controls the supremum in \( \tv \) of the penalized 
process \( \UU(\tv) - \pen(\tv) \) over the ball \( \B(\reps,\td) \).

\begin{theorem}
\label{Txibound}
Assume \( (\cc{E}) \) and \( (\cc{E}\!\reps) \).
For any \( \mrho \in (0,1) \) with
\( { \mrho \reps}/(1-\mrho) \le \lambdab \),
any \( \td \in \Tv \)
\begin{eqnarray*}
    \log \E \exp \Bigl\{
        \sup_{\tv \in \B(\reps,\td)}
        \mrho \bigl[ \UU(\tv) - \pen(\tv) \bigr]
    \Bigr\}
    \le
    \frac{2 \reps^{2} \mrho^{2}}{1 - \mrho} 
    %\kapla\Bigl( \frac{\mrho \reps}{1 - \mrho} \Bigr)
    + (1 - \mrho) \entrl(\reps,\td) - \mrho \pen_{\reps}(\td)   
\end{eqnarray*}
with
\begin{eqnarray*}
    \pen_{\reps}(\td)
    =
    \inf_{\tv \in \B(\reps,\td)} \pen(\tv).
\end{eqnarray*}
\end{theorem}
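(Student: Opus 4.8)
The plan is to reduce to an unpenalised supremum and then run a dyadic chaining argument. Since every \( \tv \in \B(\reps,\td) \) satisfies \( \pen(\tv) \ge \pen_{\reps}(\td) \), the factor \( \exp\{-\mrho \pen_{\reps}(\td)\} \) pulls out of the supremum and produces the last term of the claimed bound; it therefore suffices to show \( \log \E \exp\{\mrho \sup_{\tv \in \B(\reps,\td)} \UU(\tv)\} \le \frac{2\reps^{2}\mrho^{2}}{1-\mrho} + (1-\mrho)\entrl(\reps,\td) \). I would anchor at the centre by writing \( \sup_{\tv}\UU(\tv) = \UU(\td) + \sup_{\tv}[\UU(\tv)-\UU(\td)] \) and applying H\"older's inequality with exponents \( 1/\mrho \) and \( 1/(1-\mrho) \). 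Condition \( (\cc{E}) \) makes the anchor factor harmless, \( (\E\exp\{\UU(\td)\})^{\mrho}=1 \), leaving
\begin{eqnarray*}
    \log \E \exp\Bigl\{\mrho \sup_{\tv}\UU(\tv)\Bigr\}
    & \le &
    (1-\mrho)\log \E \exp\Bigl\{\tfrac{\mrho}{1-\mrho}\sup_{\tv}\bigl[\UU(\tv)-\UU(\td)\bigr]\Bigr\}.
\end{eqnarray*}
Writing \( \UU = \ExpM + \zeta \) and using that \( (\cc{E}) \) with Jensen's inequality forces \( \ExpM(\tv) = \E\UU(\tv) \le 0 \), the increment \( \UU(\tv)-\UU(\td) \) is dominated by the centred increment \( \zeta(\tv)-\zeta(\td) \), which is exactly what \( (\cc{E}\!\reps) \) governs.

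For the chaining I would fix, for each \( k \ge 0 \), a minimal \( 2^{-k}\reps \)-net \( \cc{D}_{k} \) of \( \B(\reps,\td) \), so that \( \#\cc{D}_{k} = \Ns(2^{-k}\reps,\reps,\td) =: N_{k} \) and \( \cc{D}_{0}=\{\td\} \), and equip the nets with a parent map: each \( \tv \in \cc{D}_{k} \) lies in some ball \( \B(2^{-(k-1)}\reps,\tv') \) with \( \tv' \in \cc{D}_{k-1} \), so that \( \ExpL(\tv,\tv') \le 2^{-(k-1)}\reps \). Following a point of the finest net up its chain of ancestors \( \tv = \tv_{K}, \tv_{K-1}, \dots, \tv_{0} = \td \) gives \( \zeta(\tv)-\zeta(\td) = \sum_{k=1}^{K}[\zeta(\tv_{k})-\zeta(\tv_{k-1})] \), the passage \( K \to \infty \) being justified by separability and continuity. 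I would bound the supremum by the sum of the level-wise maxima \( W_{k} = \max[\zeta(\tv_{k})-\zeta(\tv_{k-1})] \), the maximum running over the at most \( N_{k} \) ancestor edges at level \( k \), and then apply the weighted H\"older inequality with weights \( \beta_{k}=2^{-k} \) (which sum to one):
\begin{eqnarray*}
    \E \exp\Bigl\{\tfrac{\mrho}{1-\mrho}\sum_{k\ge1}W_{k}\Bigr\}
    & \le &
    \prod_{k\ge1}\Bigl(\E \exp\bigl\{\tfrac{\mrho}{(1-\mrho)\beta_{k}}W_{k}\bigr\}\Bigr)^{\beta_{k}}.
\end{eqnarray*}

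Each factor I would estimate by a union bound over the \( N_{k} \) edges followed by \( (\cc{E}\!\reps) \): standardising an increment of length \( \ExpL(\tv_{k},\tv_{k-1}) \le 2^{-(k-1)}\reps \) gives the sub-Gaussian bound \( \E\exp\{s[\zeta(\tv_{k})-\zeta(\tv_{k-1})]\} \le \exp\{\tfrac12 s^{2}\ExpL^{2}(\tv_{k},\tv_{k-1})\} \), valid whenever \( s\,\ExpL \le 2\lambdab \). With \( s = \tfrac{\mrho}{(1-\mrho)\beta_{k}} \) this requirement reads \( \tfrac{2\mrho\reps}{1-\mrho} \le 2\lambdab \), i.e. exactly the hypothesis \( \mrho\reps/(1-\mrho) \le \lambdab \), uniformly in \( k \). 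Collecting the factors, the cardinalities contribute \( \sum_{k}\beta_{k}\log N_{k} = \entrl(\reps,\td) \), while the variance terms form the geometric series
\begin{eqnarray*}
    \frac{\mrho^{2}}{2(1-\mrho)^{2}}\sum_{k\ge1}\frac{\ExpL^{2}(\tv_{k},\tv_{k-1})}{\beta_{k}}
    & \le &
    \frac{\mrho^{2}}{2(1-\mrho)^{2}}\,\sum_{k\ge1}2^{k}\bigl(2^{-(k-1)}\reps\bigr)^{2}
    =
    \frac{2\reps^{2}\mrho^{2}}{(1-\mrho)^{2}}.
\end{eqnarray*}
The outer H\"older exponent \( 1-\mrho \) turns these into \( (1-\mrho)\entrl(\reps,\td) \) and \( \frac{2\reps^{2}\mrho^{2}}{1-\mrho} \), which together with the penalty factor reproduce the asserted inequality.

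The step I expect to be delicate is reconciling the non-centred supremum with the centred increment condition \( (\cc{E}\!\reps) \). The only tools available at a single point are the exact normalisation \( (\cc{E}) \) and the resulting sign \( \ExpM \le 0 \); \( (\cc{E}\!\reps) \) says nothing about \( \zeta \) at an isolated point, only about its increments. The argument must therefore be arranged so that the one genuinely non-centred quantity, \( \UU(\td) \), is absorbed by its unit exponential moment while the drift \( \ExpM(\tv)-\ExpM(\td) \) never produces a surplus term; this is what dictates the anchoring-then-chaining order above and, in the applications, the placement of \( \td \) at a point where the drift is maximal. The remaining work is purely quantitative: tracking the constants through the two H\"older steps and verifying that the admissibility range of \( (\cc{E}\!\reps) \) is respected simultaneously at every dyadic level, which is guaranteed by the single scaling condition on \( \mrho\reps/(1-\mrho) \).
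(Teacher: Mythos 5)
Your dyadic chaining machinery is sound --- the nets, the weighted H\"older step with weights \( 2^{-k} \), the union bound combined with the sub-Gaussian reformulation of \( (\cc{E}\!\reps) \), and the admissibility check that \( \mrho\reps/(1-\mrho)\le\lambdab \) covers every dyadic level all reproduce exactly the content and the constants of the paper's Lemma~\ref{LstochUU}. The genuine gap is in the step that feeds the chaining: you reduce \( \sup_{\tv\in\B(\reps,\td)}[\UU(\tv)-\UU(\td)] \) to the centred supremum \( \sup_{\tv}\zeta(\tv,\td) \) by arguing that \( (\cc{E}) \) plus Jensen gives \( \E\UU(\tv)\le 0 \), hence the drift is harmless. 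What you actually need is \( \E\UU(\tv)-\E\UU(\td)\le 0 \) for all \( \tv \) in the ball, and Jensen says nothing about that difference: the centre of the ball need not be the point of maximal mean. For instance, with \( \UU(\tv)=\sigma(\tv)Z-\sigma^{2}(\tv)/2 \) for a standard normal \( Z \), condition \( (\cc{E}) \) holds identically while the drift \( \E\UU(\tv)-\E\UU(\td)=[\sigma^{2}(\td)-\sigma^{2}(\tv)]/2 \) can be arbitrarily large and positive, so the claimed domination fails. Your own closing remark proposes to rescue this by ``the placement of \( \td \) at a point where the drift is maximal'', but that move is not available: \( \td \) is an arbitrary given point, the ball is centred at this same \( \td \), and the theorem quantifies over every such centre.

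The missing idea --- and the one genuinely nontrivial device in the paper's proof --- is to decouple the anchor of the decomposition from the centre of the ball. The paper chooses \( \tdc\in\B(\reps,\td) \) minimizing \( -\E\UU(\tv)+\pen(\tv) \) over the ball, i.e.\ maximizing \( \E\UU(\tv)-\pen(\tv) \). Then for every \( \tv\in\B(\reps,\td) \), by this very choice, \( \UU(\tv)-\pen(\tv)\le\UU(\tdc)-\pen(\tdc)+\zeta(\tv,\tdc) \): drift and penalty are absorbed together, with no surplus term. H\"older with exponents \( 1/\mrho \) and \( 1/(1-\mrho) \), condition \( (\cc{E}) \) applied at the single point \( \tdc \) (giving \( \E\exp\{\UU(\tdc)-\pen(\tdc)\}=\ex^{-\pen(\tdc)}\le\ex^{-\pen_{\reps}(\td)} \)), and your chaining bound applied to \( \sup_{\tv}\zeta(\tv,\tdc) \) then yield exactly the stated inequality. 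If you insist on pulling the penalty out of the supremum at the very first step, as your proposal does, the argument can still be repaired, but the anchor must then be a maximizer of \( \E\UU \) alone over the ball; in either version the anchor is an extremal point of the mean inside the ball, never the centre \( \td \).
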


\begin{proof}
We begin with some result which bounds the stochastic component of the process
\( \UU(\tv) \) within the local ball \( \B(\reps,\td) \).

\begin{lemma}
\label{LstochUU}
Assume that \( \zeta(\tv) \) is a separable process satisfying
condition \( (\cc{E}\!\reps) \).
Then for any given \( \td \in \Tv \), \( \tdc \in \B(\reps,\td) \),
and \( \lambda \le \lambdab \)
\begin{eqnarray*}
    \log \E \exp\biggl\{
            \frac{\lambda}{\reps}
            \sup_{\tv \in \B(\reps,\td)} \zeta(\tv,\tdc)
        \biggr\}
    \le
    \entrl(\reps,\td) + 2 \lambda^{2} .
%    \kapla .
\end{eqnarray*}
\end{lemma}

\begin{proof}
The proof is based on  the standard chaining argument; see e.g.
\citeasnoun{VW1996}. Without loss of generality, we assume that
\( \entrl(\reps,\td) < \infty \). Then for any integer \( k \ge 0 \), there
exists a \( 2^{-k} \reps \)-net \( \cc{D}_{k}(\reps,\td) \) in the local ball \(
\B(\reps,\td) \) having the cardinality \( \Ns(2^{-k}\reps,\reps,\td) \).
%
%Let  \( \klim \ge 1 \) and \( \tv \) be an arbitrary point in
%\( \cc{D}_{\klim}(\reps) \).
%
 Using the nets  \( \cc{D}_{k}(\reps,\td)\) with \( k=1,\ldots, K-1\),  one can
 construct a chain connecting an arbitrary point \( \tv \)  in
\( \cc{D}_{K}(\reps,\td) \) and \( \tdc \).  It means
that  one can find   points %\( \tau_{k}(\tv) \in \cc{D}_{k}(\reps,\td) \),
\( \tv_{k} \in \cc{D}_{k}(\reps,\td), \) \( k=1,\ldots,K-1 \), such
that \( \ExpL(\tv_{k},\tv_{k-1}) \le 2^{-k+1}\reps \) for \(
k=1,\ldots,K \). Here \( \tv_{K} \) means \( \tv \) and
\( \tv_{0} \) means \( \tdc \).  Notice that
\(\tv_{k}\)  can be constructed recurrently:  \( \tv_{k-1} =
\tau_{k-1}(\tv_{k}), \ k=K,\ldots,1 \), where
\[
    \tau_{k-1}(\tv)
    =
    \argmin_{\tc \in \cc{D}_{k-1}(\reps,\td)} \ExpL(\tv,\tc).
\]
It obviously holds
\[
    \zeta(\tv,\tdc)
    =
    \sum_{k=1}^{K} \zeta( \tv_{k}, \tv_{k-1} ) .
\]
It holds for \( \xi( \tv_{k}, \tv_{k-1} ) =
\zeta(\tv_{k}, \tv_{k-1})/ \ExpL(\tv_{k}, \tv_{k-1}) \) that
\begin{eqnarray*}
    \zeta( \tv_{k}, \tv_{k-1} )
    =
    \ExpL( \tv_{k},\tv_{k-1} ) \xi( \tv_{k}, \tv_{k-1} )
    =
    2 \reps \,  c_{k} \, \xi( \tv_{k}, \tv_{k-1} )
\end{eqnarray*}
with \( c_{k} = \ExpL( \tv_{k},\tv_{k-1} )/(2 \reps) \le 2^{-k} \).
By condition \( (\cc{E}\!\reps) \)
\( \log \E \exp\bigl\{ 2 \lambda \xi( \tv_{k},\tv_{k-1}) \bigr\} \le
2 \lambda^{2} \).
Next,
\begin{eqnarray}
\label{e4x}
    \sup_{\tv \in \cc{D}_{k}(\reps,\td) } \zeta(\tv,\tdc)
    & \le &
    \sum_{k=1}^{K} \sup_{\tc \in \cc{D}_{k}(\reps,\td)}
    \zeta( \tc,\tau_{k-1}(\tc) )
    \nn
    & \le &
    2\reps \sum_{k=1}^{K}
    \sup_{\tc \in \cc{D}_{k}(\reps,\td)} c_{k}\xi( \tc,\tau_{k-1}(\tc) ) .
\end{eqnarray}
Since \( c_{k} \le 2^{-k} \),
the H\"older inequality and condition \( (\cc{E}\!\reps) \) imply 
%with \( \kapla = 2 \lambda^{2} \)
\begin{eqnarray*}
    &&\log  \E \exp\biggl\{
            \frac{\lambda}{\reps} \sup_{\tv \in \cc{D}_{K}(\reps,\td)}
            \zeta(\tv,\tdc)
        \biggl\}
     \le
    \log \E \exp\biggl\{ 2 \lambda \sum_{k=1}^{K}  \sup_{\tc
    \in \cc{D}_{k}(\reps,\td)}
         c_{k} \xi( \tc,\tau_{k-1}(\tc) ) \biggr\}
    \nn
    &&\quad \le
    \sum_{k=1}^{K} 2^{-k} \log\biggl[
        \E \exp \Bigl\{
            \sup_{\tc \in \cc{D}_{k}(\reps,\td)}
            2^{k} c_{k}  \times 2\lambda \xi( \tc,\tau_{k-1}(\tc) )
        \Bigr\}
    \biggr]
    \nn
    &&\quad \le
    \sum_{k=1}^{K} 2^{-k} \log
    \biggl[
        \sum_{\tc \in \cc{D}_{k}(\reps,\td)}
        \E \exp\bigl\{
            2^{k} c_{k} \times 2 \lambda \xi( \tc,\tau_{k-1}(\tc) )
        \bigr\}
    \biggr]
    \nn
    &&\quad  \le
    \sum_{k=1}^{K} 2^{-k}
    \bigl\{ \log \Ns(2^{-k}\reps,\reps,\td) + 2 \lambda^{2} \bigr\} .
\end{eqnarray*}
These inequalities and the separability of \( \zeta(\tv,\tdc) \) yield
%\( \Ns(2^{-k}\reps,\td) \le \bigl\{ \CTheta2^{-k} \bigr\}^{p} \) and
\begin{eqnarray*}
    \log \E \exp\biggl\{
        \frac{\lambda}{\reps} \sup_{\tv \in \B(\reps,\td)}
        \zeta(\tv,\tdc)
    \biggl\}
    =
    \lim_{K\rightarrow \infty}\log \E \exp\biggl\{
        \frac{\lambda}{\reps} \sup_{\tv \in \cc{D}_{K}(\reps,\td)}
        \zeta(\tv,\tdc)
    \biggl\}
    \\
    \le
    \sum_{k=1}^{\infty}
        2^{-k} \bigl\{  2 \lambda^{2} + \log \Ns(2^{-k}\reps,\reps,\td) \bigr\}
    \le
%    \kapla 
    2 \lambda^{2} + \entrl(\reps,\td)
\end{eqnarray*}
which completes the proof of the lemma.
\end{proof}

Now define for a fixed a point \( \td \) 
\begin{eqnarray*}
    \tdc
    =
    \argmin_{\tv \in \B(\reps,\td)} \{ \ExpM(\tv) + \pen(\tv) \},
\end{eqnarray*}
where \( \ExpM(\tv) = - \E \UU(\tv) \).
If there are many such points, then take any of them as \( \tdc \).
Obviously
\begin{eqnarray*}
    \sup_{\tv \in \B(\reps,\td)}
        \bigl\{ \UU(\tv) - \pen(\tv) \bigr\}
    \le
    \UU(\tdc) - \pen(\tdc)
    + \sup_{\tv \in \B(\reps,\td)} \zeta(\tv,\tdc).
\end{eqnarray*}
Therefore, by the H\"older inequality and Lemma~\ref{LstochUU} with
\( \lambda = \reps \mrho /(1 - \mrho) \)
\begin{eqnarray*}
\label{BcsupUU}
%\begin{array}{l}
    &&
    \log \E \exp \Bigl\{
        \sup_{\tv \in \B(\reps,\td)}
        \mrho \bigl[ \UU(\tv) - \pen(\tv) \bigr]
    \Bigr\}
%        \bb{1}(\cc{A}_{k})
    \\
    &&
    \quad \le
    \mrho \log \E \exp \bigl\{
        \UU(\tdc) - \pen(\tdc)
    \bigr\}
%    \\
%    &&
%    \qquad
    + \, (1 - \mrho)
    \log \E \exp \Bigl\{ \frac{\mrho}{1 - \mrho}
         \sup_{\tv \in \B(\reps,\td)}
             \zeta(\tv,\tdc)
    \Bigr\}
    \\
    &&
    \quad \le
    2 \reps^{2} \mrho^{2}/ (1 - \mrho) 
    %\kapla\Bigl( \frac{\mrho \reps}{1 - \mrho} \Bigr)
    + (1 - \mrho) \entrl(\reps,\td) - \mrho \pen(\tdc)
    \\
    &&
    \quad \le
    %(1 - \mrho) \kapla\Bigl( \frac{\mrho \reps}{1 - \mrho} \Bigr)
    {2 \reps^{2} \mrho^{2}}/(1 - \mrho)
    + (1 - \mrho) \entrl(\reps,\td) - \mrho \pen_{\reps}(\td)     .
%\end{array}
\end{eqnarray*}
which is the assertion of the theorem.
\end{proof}

\subsection{A global exponential bound for the penalized process}
This section presents some sufficient conditions on the penalty function
\( \pen(\tv) \) which ensure the general exponential bound for the penalized process
\( \UU(\tv) - \pen(\tv) \).
For simplicity we assume that the local entropy numbers
\( \entrl(\reps,\tv) \) are uniformly bounded by a constant \( \entrl^{*}(\Tv) \).
Let also \( \mes \) be a \( \sigma \)-finite measure on the space \( \Tv \) and
 \( \mes(A) \) stand for the \( \mes \)-measure of a set \( A \subset \Tv \).
The standard proposal for \( \mes \) is the usual Lebesgue measure.

\begin{theorem}
\label{Tmainboundexp2}
Assume \( (\cc{E}) \) and \( (\cc{E}\!\reps) \) with some fixed \( \reps \) and \( \lambdab \).
Let \( \mrho < 1 \) be such that
\( { \mrho \reps}/{(1-\mrho) } \le \lambdab \).
Let also \( \entrl(\reps,\tv) \le \entrl^{*}(\Tv) \) for all \( \tv \in \Tv 
\). Let a \( \sigma \)-finite measure \( \mes \) on \( \Tv \) be such that 
for some \( \nu \ge 1 \)
\begin{eqnarray}
\label{Btvtd}
    \sup_{\tv,\tc: \, \ExpL(\tv,\tc) \le \reps}
        \frac{\mes(\B(\reps,\tv))}{\mes(\B(\reps,\tc))}
    \le
    \nu .
\end{eqnarray}
Finally, let a function \( \pen(\tv) \) satisfy
\begin{eqnarray*}
    \penH_{\reps}(\mrho)
    \eqdef
    \log \int_{\Tv} \frac{1}{\mes(\B(\reps,\td))}
    \exp\bigl\{ - \mrho \pen_{\reps}(\td) \bigr\}  d \mes(\td)
    < \infty
\end{eqnarray*}
with \( \pen_{\reps}(\td)
    =
    \inf_{\tv \in \B(\reps,\td)} \pen(\tv) \).
Then
\begin{eqnarray}
\label{UUbound}
    \E \exp\Bigl\{
        \sup_{\tv \in \Tv} \mrho \bigl[ \UU(\tv) - \pen(\tv) \bigr]
    \Bigr\}
    \le 
    \Crlq(\mrho,\reps) ,
\end{eqnarray}
where
\begin{eqnarray}
\label{riskre}
    \log \Crlq(\mrho,\reps)
    =
    \frac{2 \reps^{2} \mrho^{2}}{1 - \mrho} 
    + (1 - \mrho) \entrl^{*}(\Tv) + \log \nu
    + \penH_{\reps}(\mrho) .
\end{eqnarray}    
\end{theorem}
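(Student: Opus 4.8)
The plan is to lift the local bound of Theorem~\ref{Txibound} to a global one by averaging over the ball centers \( \td \) against the measure \( \mes \). First I would introduce, for each \( \td \in \Tv \), the quantity
\[
    \Xi(\td) \eqdef \exp\Bigl\{ \sup_{\tv\in\B(\reps,\td)} \mrho[\UU(\tv)-\pen(\tv)] + \mrho\pen_{\reps}(\td) \Bigr\} .
\]
Theorem~\ref{Txibound}, combined with the uniform entropy bound \( \entrl(\reps,\td)\le\entrl^{*}(\Tv) \), yields the uniform estimate
\[
    \E\Xi(\td) \le \exp\Bigl\{ \frac{2\reps^{2}\mrho^{2}}{1-\mrho} + (1-\mrho)\entrl^{*}(\Tv) \Bigr\} \eqdef \Crl_{0} .
\]

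The key pointwise step exploits the symmetry of \( \ExpL \): one has \( \td\in\B(\reps,\tv) \) if and only if \( \tv\in\B(\reps,\td) \). Fix any \( \tv\in\Tv \). For every \( \td\in\B(\reps,\tv) \) the point \( \tv \) lies in \( \B(\reps,\td) \), so \( \mrho[\UU(\tv)-\pen(\tv)] \le \sup_{\tc\in\B(\reps,\td)}\mrho[\UU(\tc)-\pen(\tc)] = \log\Xi(\td) - \mrho\pen_{\reps}(\td) \), i.e.
\[
    \exp\{\mrho[\UU(\tv)-\pen(\tv)]\} \le \Xi(\td)\, e^{-\mrho\pen_{\reps}(\td)} .
\]
Integrating this inequality over \( \td\in\B(\reps,\tv) \) against \( \mes \) and dividing by \( \mes(\B(\reps,\tv)) \) gives
\[
    \exp\{\mrho[\UU(\tv)-\pen(\tv)]\} \le \frac{1}{\mes(\B(\reps,\tv))}\int_{\B(\reps,\tv)} \Xi(\td)\, e^{-\mrho\pen_{\reps}(\td)}\, d\mes(\td) .
\]

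Now I would invoke the doubling-type condition (\ref{Btvtd}): for \( \td\in\B(\reps,\tv) \) we have \( \ExpL(\tv,\td)\le\reps \), hence \( \mes(\B(\reps,\tv))\ge\nu^{-1}\mes(\B(\reps,\td)) \), so that \( \mes(\B(\reps,\tv))^{-1}\le\nu\,\mes(\B(\reps,\td))^{-1} \). Substituting this and enlarging the domain of integration from \( \B(\reps,\tv) \) to \( \Tv \) produces a right-hand side free of \( \tv \):
\[
    \exp\{\mrho[\UU(\tv)-\pen(\tv)]\} \le \nu\int_{\Tv} \frac{\Xi(\td)}{\mes(\B(\reps,\td))}\, e^{-\mrho\pen_{\reps}(\td)}\, d\mes(\td) .
\]
Taking the supremum over \( \tv \) on the left and then the expectation, interchanging \( \E \) with the integral by Tonelli (all integrands are nonnegative and the process is separable), and using \( \E\Xi(\td)\le\Crl_{0} \), I obtain
\[
    \E\exp\Bigl\{\sup_{\tv\in\Tv}\mrho[\UU(\tv)-\pen(\tv)]\Bigr\} \le \nu\,\Crl_{0}\int_{\Tv}\frac{e^{-\mrho\pen_{\reps}(\td)}}{\mes(\B(\reps,\td))}\,d\mes(\td) = \nu\,\Crl_{0}\,e^{\penH_{\reps}(\mrho)} .
\]
Taking logarithms reproduces exactly the constant (\ref{riskre}), which is the claim (\ref{UUbound}).

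The one genuinely delicate ingredient is the bookkeeping of the ball masses. The pointwise bound naturally carries the factor \( 1/\mes(\B(\reps,\tv)) \) centered at the point \( \tv \) where the supremum is probed, whereas the integrability hypothesis \( \penH_{\reps}(\mrho) \) is phrased through \( 1/\mes(\B(\reps,\td)) \) centered at the running variable \( \td \); trading one ball mass for the other is precisely what condition (\ref{Btvtd}) permits and is where the factor \( \nu \) enters the final constant. The supporting points — measurability of the suprema and the justification of the Fubini–Tonelli interchange — are routine under the stated separability of \( \UU \) and the \( \sigma \)-finiteness of \( \mes \).
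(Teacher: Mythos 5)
Your proposal is correct and follows essentially the same route as the paper: your pointwise averaging step (fix \( \tv \), integrate the inequality \( \exp\{\mrho[\UU(\tv)-\pen(\tv)]\} \le \Xi(\td)\,e^{-\mrho\pen_{\reps}(\td)} \) over \( \td \in \B(\reps,\tv) \), trade \( 1/\mes(\B(\reps,\tv)) \) for \( \nu/\mes(\B(\reps,\td)) \) via (\ref{Btvtd}), and enlarge the domain to \( \Tv \)) is exactly the content of the paper's Lemma~\ref{Lmaxloc}, just carried out inline, after which both arguments conclude identically by Tonelli and the local bound of Theorem~\ref{Txibound}.
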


\begin{proof}
We begin with a simple technical result which bounds the maximum of a given function
via the weighted integral of the local maxima.
\begin{lemma}
\label{Lmaxloc}
Let \( f(\tv) \) be a nonnegative function on \( \Tv \subset \R^{p} \) and let for every
point \( \tv \in \Tv \) a vicinity \( A(\tv) \) be fixed such that
\( \tc \in A(\tv) \) implies \( \tv \in A(\tc) \). Let also
the measure \( \mes\bigl( A(\tv) \bigr) \) of the set \( A(\tv) \)
fulfill for every \( \td \in \Tv \)
\begin{eqnarray}
\label{Atvtd}
    \sup_{\tv \in A(\td)} \frac{\mes\bigl( A(\tv) \bigr)}{\mes\bigl( A(\td) \bigr)}
    \le
    \nu .
\end{eqnarray}
Then
\begin{eqnarray*}
    \sup_{\tv \in \Tv} f(\tv)
    \le
    \nu \int_{\Tv} f^{*}(\tv) \frac{1}{\mes\bigl( A(\tv) \bigr)} d\mes(\tv)
\end{eqnarray*}
with 
\begin{eqnarray*}
    f^{*}(\tv)
    \eqdef
    \sup_{\tc \in A(\tv)} f(\tc).
\end{eqnarray*}
\end{lemma}

\begin{proof}
For every \( \td \in \Tv \)
\begin{eqnarray*}
    \int_{\Tv} f^{*}(\tv) \frac{1}{\mes\bigl( A(\tv) \bigr)} d\mes(\tv)
    & \ge &
    \int_{A(\td)} f^{*}(\tv) \frac{1}{\mes\bigl( A(\tv) \bigr)} d\mes(\tv)
    \nn
    & \ge &
    f(\td)\int_{A(\td)} \frac{1}{\mes\bigl( A(\tv) \bigr)} d\mes(\tv)
\end{eqnarray*}
because \( \tv \in A(\td) \) implies \( \td \in A(\tv) \) and hence,
\( f(\td) \le f^{*}(\tv) \). Now by (\ref{Atvtd})
\begin{eqnarray*}
    \int_{\Tv} f^{*}(\tv) \frac{1}{\mes\bigl( A(\tv) \bigr)} d\mes(\tv)
    \ge
    \frac{f(\td)}{\nu} \int_{A(\td)} \frac{1}{\mes\bigl( A(\td) \bigr)} d\mes(\tv)
    =
    f(\td) /\nu
\end{eqnarray*}
as required.
\end{proof}

This result applied to
\( f(\tv) = \exp\bigl\{ \mrho \bigl[ \UU(\tv) - \pen(\tv) \bigr] \bigr\} \)
and \( A(\tv) = \B(\reps,\tv) \) implies
\begin{eqnarray*}
    \sup_{\tv \in \Tv} \exp\Bigl\{
        \mrho \bigl[ \UU(\tv) - \pen(\tv) \bigr]
    \Bigr\}
    \le
    \nu \int_{\Tv}
        \sup_{\tv \in \B(\reps,\td)}
        \exp\Bigl\{ \mrho \bigl[ \UU(\tv) - \pen(\tv) \bigr] \Bigr\}
        \frac{d\mes(\td)}{\mes\bigl( \B(\reps,\td) \bigr)} .
\end{eqnarray*}
This implies by Theorem~\ref{Txibound}
\begin{eqnarray*}
    &&
    \log \E \sup_{\tv \in \Tv} \exp\Bigl\{
        \mrho \bigl[ \UU(\tv) - \pen(\tv) \bigr]
    \Bigr\}
    \nn
    && \qquad \le
    \frac{2 \reps^{2} \mrho^{2}}{1 - \mrho} + (1 - \mrho) \entrl^{*}(\Tv)
    +
    \log
    \biggl\{ \nu \int_{\Tv} \exp\bigl\{ - \mrho \pen_{\reps}(\td) \bigr\}
    \frac{d\mes(\td)}{\mes\bigl( \B(\reps,\td) \bigr)}  \biggr\}
    \nn
    && \qquad \le
    \frac{2 \reps^{2} \mrho^{2}}{1 - \mrho} + (1 - \mrho) \entrl^{*}(\Tv)
    + \log (\nu)
    + \penH_{\reps}(\mrho) 
\end{eqnarray*}
and the assertion follows.
\end{proof}

\subsection{Smooth case}
Here we discuss the special case when \( \Tv \subset \R^{p} \),
the process \( \UU(\tv) \) and its stochastic component \( \zeta(\tv) \) are
absolutely continuous and the gradient
\( \nabla \zeta(\tv) \eqdef d \zeta(\tv)/d\tv \) has bounded exponential moments.
We also assume that \( \mes \) is the Lebesgue measure on \( \Tv \).
Suppose the following condition is fulfilled:
\begin{description}
\item[\( \bb{(\cc{E}\! D)} \)]\textit{
There exist \( \lambdab > 0 \) and for each \( \tv \in \Tv \),
a symmetric non-negative matrix \( \VV(\tv) \)
such that for any } \( \lambda \le \lambdab \)
\begin{eqnarray*}
    \sup_{\tv \in \Tv} \sup_{\gammav \in S^{p}}
    \log \E \exp \Bigl\{
       2 \lambda \frac{\gammav^{\T}\nabla \zeta(\tv)}{\| \VV(\tv) \gamma \|}
    \Bigr\} \le 2 \lambda^{2} .
\end{eqnarray*}
\end{description}

The matrix function \( \VV(\tv) \) can be used for defining a natural topology in
\( \Tv \). Namely,
for any  \( \tv,\tc \in \Tv \) define
\( \normc = \| \tv - \tc \| \), \( \gammav = (\tv - \tc)/\normc \) and
\begin{eqnarray*}
%    \Vtt^{2}(\tv,\tc)
%    & \eqdef &
%    \int_{0}^{1} \gammav^{\T} \VV^{2}(\tv + t \normc \gammav) \gammav \, dt,
%    \nn
    \ExpL^{2}(\tv,\tc)
    \eqdef
    \| \tv - \tc \|^{2} \int_{0}^{1} \gammav^{\T} \VV^{2}(\tv + t \normc \gammav) \gammav \, dt  .
\end{eqnarray*}
Next, introduce for each \( \td \in \Tv \) and \( \reps > 0 \) the set
\begin{eqnarray*}
    \B(\reps,\td)
    \eqdef
    \{ \tv: \ExpL(\tv,\td) \le \reps \}
\end{eqnarray*}

To state the result, we need one more condition on the uniform continuity of the matrix 
\( H(\tv) \) in \( \tv \).
\begin{description}
\item[\( \bb{(H)} \)]\textit{
There exist constants
\( \reps > 0 \) and \( \nu_{1} \ge 1 \) such that
\begin{eqnarray*}
%\label{Vtd}
    \qquad
    \sup_{\tv,\tc: \ExpL(\tv,\tc) \le \reps} \,\,
    \sup_{\gammav \in S^{p}}
    \frac{\gammav^{\T} \VV^{2}(\tv) \gammav}{\gammav^{\T} \VV^{2}(\tc) \gammav}
    \le \nu_{1} \, .
\end{eqnarray*}
%Moreover, for every \( \rr \) there exists \( h = h(\rr) > 0 \) such that
%\begin{eqnarray*}
%    \VV(\tv) \le h(\rr) I,
%    \qquad
%    \tv \in \cc{A}(\rr) .
%\end{eqnarray*}
}
\end{description}

\begin{theorem}
\label{Tsmoothpen}
Let \( (\cc{E}) \) be satisfied.
Suppose that \( (\cc{E}\! D) \) holds with some \( \lambdab \) and a matrix function
\( \VV(\tv) \) which fulfills \( (H) \).
If for some \( \mrho \in (0,1) \) and \( \reps > 0 \) with
\( { \mrho \reps}/(1-\mrho) \le \lambdab \),  the penalty function
\( \pen(\tv) \) fulfills
\begin{eqnarray*}
    \penH_{\reps}(\mrho)
    \eqdef
    \log \biggl\{ 
        \omega_{p}^{-1} \reps^{-p}
        \int_{\Tv} \det(\VV(\td))
        \exp\bigl\{ - \mrho \pen_{\reps}(\td) \bigr\}  d \td
    \biggr\}
    < \infty
\end{eqnarray*}
with \( \pen_{\reps}(\td)
    =
    \inf_{\tv \in \B(\reps,\td)} \pen(\tv) \),
then
\begin{eqnarray}
\label{boundsmooth}
    \E \exp\Bigl\{
        \sup_{\tv \in \Tv} \mrho \bigl[ \UU(\tv) - \pen(\tv) \bigr]
    \Bigr\} 
    \le 
    \Crlq(\mrho,\reps)
\end{eqnarray}
where
\begin{eqnarray*}
    \log \Crlq(\mrho,\reps)
    &=& 
    \frac{2 \reps^{2} \mrho^{2}}{1 - \mrho} 
    + (1 - \mrho) \entr_{p}
%    - p \log(\reps^{p})
    + \penH_{\reps}(\mrho)
    + p \log (\nu_{1}) 
\end{eqnarray*}    
with \( \entr_{p} \) being the usual entropy number for the Euclidean ball
in \( \R^{p} \).
\end{theorem}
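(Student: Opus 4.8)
The plan is to obtain the bound as a corollary of the global exponential inequality of Theorem~\ref{Tmainboundexp2}, taking for \( \mes \) the Lebesgue measure on \( \Tv \subset \R^{p} \). All that then remains is to check the hypotheses of that theorem in the present smooth geometry and to rewrite its constant in the determinant form announced here. Four items have to be settled: the increment condition \( (\cc{E}\!\reps) \), a uniform local-entropy bound \( \entrl(\reps,\td) \le \entrl^{*}(\Tv) \), the volume-regularity (\ref{Btvtd}) of the balls \( \B(\reps,\td) \), and the identification of \( \penH_{\reps}(\mrho) \) in the two normalisations of the penalty integral.

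First I would derive \( (\cc{E}\!\reps) \) from \( (\cc{E}\! D) \). As \( \zeta \) is absolutely continuous, for \( \tc,\tv \in \Tv \) with \( \normc = \| \tv - \tc \| \) and \( \gammav = (\tv - \tc)/\normc \) one has the line-integral representation
\[
    \zeta(\tv,\tc)
    =
    \normc \int_{0}^{1} \gammav^{\T} \nabla\zeta(\tc + t \normc \gammav) \, dt .
\]
Moving \( \exp \) inside the uniform average over \( t \in [0,1] \) by Jensen's inequality and then applying the pointwise sub-Gaussian bound of \( (\cc{E}\! D) \) to each integrand gives
\[
    \E \exp\Bigl\{ \tfrac{2\lambda}{\ExpL(\tv,\tc)} \zeta(\tv,\tc) \Bigr\}
    \le
    \int_{0}^{1} \exp\{ 2 \mu_{t}^{2} \} \, dt ,
    \qquad
    \mu_{t} = \frac{\lambda \normc \| \VV(\tc + t\normc\gammav) \gammav \|}{\ExpL(\tv,\tc)} .
\]
The normalisation by \( \ExpL(\tv,\tc) \) is engineered precisely so that \( \int_{0}^{1} 2\mu_{t}^{2} \, dt = 2\lambda^{2} \), since \( \ExpL^{2}(\tv,\tc) = \normc^{2}\int_{0}^{1} \| \VV(\tc + t\normc\gammav)\gammav \|^{2} dt \). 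Condition \( (H) \) keeps \( \| \VV(\cdot)\gammav \| \) nearly constant over the segment, so the Jensen gap is of \( \nu_{1} \)-order and the effective constant is \( 2\lambda^{2} \); the range restriction \( \mrho\reps/(1-\mrho) \le \lambdab \) guarantees that the relevant \( \lambda \) stays in the admissible window of \( (\cc{E}\! D) \). This yields \( (\cc{E}\!\reps) \).

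Second I would translate the intrinsic balls into Euclidean ellipsoids. Because \( \ExpL^{2}(\tv,\td) \) is the path-average of \( \gammav^{\T}\VV^{2}(\cdot)\gammav \) and \( \ExpL \) is monotone along segments emanating from \( \td \) (so every such segment into \( \B(\reps,\td) \) stays in \( \B(\reps,\td) \)), condition \( (H) \) furnishes the sandwich
\[
    \bigl\{ \tv : (\tv-\td)^{\T}\VV^{2}(\td)(\tv-\td) \le \reps^{2}/\nu_{1} \bigr\}
    \subseteq
    \B(\reps,\td)
    \subseteq
    \bigl\{ \tv : (\tv-\td)^{\T}\VV^{2}(\td)(\tv-\td) \le \nu_{1}\reps^{2} \bigr\} .
\]
Three consequences follow. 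The Lebesgue volume satisfies \( \mes(\B(\reps,\td)) \asymp \omega_{p}\reps^{p}/\det\VV(\td) \) up to a factor \( \nu_{1}^{\pm p/2} \), so the weight \( 1/\mes(\B(\reps,\td)) \) in the \( \penH_{\reps}(\mrho) \) of Theorem~\ref{Tmainboundexp2} turns into \( \det\VV(\td)/(\omega_{p}\reps^{p}) \) and reproduces exactly the determinant integral claimed here. The ratio (\ref{Btvtd}) is bounded by a fixed power of \( \nu_{1} \), using \( \det\VV(\tc)/\det\VV(\tv) \le \nu_{1}^{p/2} \), itself a consequence of \( (H) \). Finally, covering \( \B(\reps,\td) \) by smaller balls reduces, under the linear map \( \tv \mapsto \VV(\td)\tv \) that carries the ellipsoid to a Euclidean ball, to the Euclidean covering problem, so \( \entrl(\reps,\td) \le \entr_{p} \) uniformly in \( \td \), any residual distortion being of the same \( \nu_{1} \)-type collected below.

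Collecting these into the constant of Theorem~\ref{Tmainboundexp2}, the term \( 2\reps^{2}\mrho^{2}/(1-\mrho) \) is untouched, the entropy contributes \( (1-\mrho)\entr_{p} \), the penalty integral takes the announced determinant form \( \penH_{\reps}(\mrho) \), and the various powers of \( \nu_{1} \) from the volume comparison and from \( \log\nu \) aggregate into the single term \( p\log(\nu_{1}) \). The main obstacle is exactly this geometric passage: one must control the volume, the local covering numbers, and the measure-ratio \( \nu \) of the intrinsic \( \ExpL \)-balls uniformly over \( \Tv \) from the single pointwise continuity estimate \( (H) \), and then account the resulting \( \nu_{1} \)-powers carefully enough to land on \( p\log(\nu_{1}) \). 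By comparison the verification of \( (\cc{E}\!\reps) \) is routine once \( (H) \) bounds the Jensen gap, and the final assembly is mechanical.
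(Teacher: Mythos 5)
Your overall strategy is exactly the paper's: reduce to Theorem~\ref{Tmainboundexp2} with Lebesgue measure, derive \( (\cc{E}\!\reps) \) from \( (\cc{E}\!D) \), sandwich the intrinsic balls \( \B(\reps,\td) \) between ellipsoids of \( \VV(\td) \) via \( (H) \) to obtain the volume regularity (\ref{Btvtd}) and the determinant form of \( \penH_{\reps}(\mrho) \), and bound the local entropy by \( \entr_{p} \) plus \( \nu_{1} \)-terms (these are the paper's Lemmas~\ref{Lsmu}, \ref{LBtvtd}, \ref{Lentrls}). However, your verification of \( (\cc{E}\!\reps) \) — which you call routine — contains a genuine gap. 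After moving \( \exp \) inside the \( t \)-average you obtain
\begin{eqnarray*}
    \E \exp\Bigl\{ \tfrac{2\lambda}{\ExpL(\tv,\tc)}\, \zeta(\tv,\tc) \Bigr\}
    \le
    \int_{0}^{1}\exp\bigl\{2\mu_{t}^{2}\bigr\}\,dt ,
\end{eqnarray*}
and then argue that since \( \int_{0}^{1}2\mu_{t}^{2}\,dt = 2\lambda^{2} \), the ``effective constant'' is \( 2\lambda^{2} \) up to a \( \nu_{1} \)-order Jensen gap. But Jensen runs the wrong way here: \( \int_{0}^{1}\exp\{2\mu_{t}^{2}\}dt \ge \exp\{2\lambda^{2}\} \), and the honest bound available from \( (H) \) is only \( \exp\{2\nu_{1}^{2}\lambda^{2}\} \), since \( \mu_{t}^{2} \le \nu_{1}^{2}\lambda^{2} \). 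This inflation cannot be swept into the \( p\log(\nu_{1}) \) term: it propagates through Lemma~\ref{LstochUU} and Theorem~\ref{Txibound} (applied with \( \lambda = \mrho\reps/(1-\mrho) \)) and turns the leading term of \( \log\Crlq(\mrho,\reps) \) into \( 2\nu_{1}^{2}\reps^{2}\mrho^{2}/(1-\mrho) \), which is strictly weaker than the stated \( 2\reps^{2}\mrho^{2}/(1-\mrho) \) and, unlike an additive \( \nu_{1} \)-term, blows up as \( \mrho \to 1 \).

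The paper's Lemma~\ref{Lsmu} avoids this by compensating \emph{inside} the time integral rather than after it: writing \( 2\lambda^{2} = \int_{0}^{1}2\mu_{t}^{2}\,dt \) and setting \( \xi_{t} = \gammav^{\T}\nabla\zeta(\tc+t\normc\gammav)\,/\,\|\VV(\tc+t\normc\gammav)\gammav\| \), one bounds
\begin{eqnarray*}
    \log\E\exp\Bigl\{ \int_{0}^{1}\bigl[\,2\mu_{t}\xi_{t} - 2\mu_{t}^{2}\,\bigr]dt \Bigr\}
    \le
    \int_{0}^{1}\log\E\exp\bigl\{2\mu_{t}\xi_{t} - 2\mu_{t}^{2}\bigr\}\,dt
    \le
    0 ,
\end{eqnarray*}
where the first step is the H\"older/Jensen interchange for the log-moment generating function and the second holds because each \( t \)-slice is nonpositive \emph{exactly} by \( (\cc{E}\!D) \); condition \( (H) \) enters only to keep \( \mu_{t} \) inside the admissible window \( \mu_{t} \le \lambdab \), not to control any residual gap. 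With this replacement your argument goes through: the remaining geometric steps you sketch (including the useful observation that the chord from \( \td \) to any point of \( \B(\reps,\td) \) stays inside \( \B(\reps,\td) \), which licenses applying \( (H) \) along the segment) are sound and coincide with the paper's Lemmas~\ref{LBtvtd} and \ref{Lentrls}.
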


\begin{proof}
First we show that the differentiability condition \( (\cc{E}\! D) \) implies the local
moment condition \( (\cc{E}\!\reps) \).

\begin{lemma}
\label{Lsmu}
Assume that \( (\cc{E}\! D) \) holds with some \( \lambdab \). Then
for any \( \td \in \Tv \) and any \( \lambda \) with
\( |\lambda| \le \lambdab / \nu_{1}^{1/2} \), it holds
\begin{eqnarray}
\label{zsmu}
    \sup_{\tv \in \B(\reps,\td)} \log \E \exp \biggl\{
        2 \lambda \frac{\zeta(\tv,\td)}{\ExpL(\tv,\td)}
    \biggr\}
    \le
    2 \lambda^{2}.
\end{eqnarray}
\end{lemma}

\begin{proof}
For   \( \tv \in \B(\reps,\td) \), denote \( \normc = \| \tv - \td \| \),
\( \gammav = (\tv - \td)/\normc \).
With this notation
\begin{eqnarray*}
    \zeta(\tv,\td)
    =
    \normc \gammav^{\T}\int_{0}^{1} \nabla \zeta(\td + t \normc \gammav) dt .
\end{eqnarray*}
The condition \( (H) \) implies for every \( t \in [0,1] \) that
\begin{eqnarray*}
    \lambda \frac{u \| \VV(\td + t \normc \gammav) \gammav \|}{\ExpL(\tv,\td)}
    \le
    \lambda \nu_{1}^{1/2}
    \le
    \lambdab .
\end{eqnarray*}
Now the H\"older inequality and \( (\cc{E}\! D) \) yield
\begin{eqnarray*}
    &&
    \log \E \exp \biggl\{
        2 \lambda \frac{\zeta(\tv,\td)}{\ExpL(\tv,\td)} - \lambda^{2}
    \biggr\}
    \nn
    && \quad =
    \log \E \exp \biggl\{
        \int_{0}^{1}
        \gammav^{\T} \biggl[
            \frac{2 \lambda \normc}{\ExpL(\tv,\td)}
            \nabla \zeta(\td + t \normc \gammav)
            - \frac{2 \lambda^{2} \normc^{2}}{\ExpL^{2}(\tv,\td)}
            \VV^{2}(\td + t \normc \gammav) \gammav
        \biggr]  dt
    \biggr\}
    \nn
    && \quad  \le
    \int_{0}^{1} \log \E \exp \biggl\{
    % \frac{2 \lambda }{\Lvar(\tv_{1},\tv_{2})}
        \gammav^{\T} \biggl[
            \frac{2 \lambda \normc}{\ExpL(\tv,\td)}
            \nabla \zeta(\td + t \normc \gammav)
            - \frac{2\lambda^{2} \normc^{2}}{\ExpL^{2}(\tv,\td)}
            \VV^{2}(\td + t \normc \gammav) \gammav
        \biggr]
    \biggr\}\, dt
    \nn
    && \quad \le
    0
\end{eqnarray*}
as required.
\end{proof}

Next we show that condition \( (H) \) implies (\ref{Btvtd}).
Consider for every \( \td \in \Tv \) an elliptic neighborhood
\( \BB(\reps,\td) = \{ \tv: \| \VV(\td) (\tv - \td) \| \le \reps \} \).

\begin{lemma}
\label{LBtvtd}
Assume \( (H) \). Then
\begin{enumerate}
  \item for any \( \reps > 0 \)  and any \( \tv \in \Tv \)
\begin{eqnarray}
\label{BBBnu}
\begin{array}{rcccl}
    \BB(\nu_{1}^{-1/2} \reps,\tv)
    & \subset &
    \B(\reps,\tv)
    & \subset &
    \BB(\nu_{1}^{1/2} \reps,\tv),
    \\
    \B(\nu_{1}^{-1/2} \reps,\tv)
    & \subset &
    \BB(\reps,\tv)
    & \subset &
    \B(\nu_{1}^{1/2} \reps,\tv).
\end{array}
\end{eqnarray}
  \item For every \( \tv \in \Tv \),
\begin{eqnarray}
    \nu_{1}^{-p/2}
    \le
    \reps^{-p} \mes(\B(\reps,\tv))}{{\det(\VV(\tv))} / \omega_{p}
    \le
    \nu_{1}^{p/2} ,
\end{eqnarray}
where \( \omega_{p} \) is the Lebesgue measure of the unit ball in \( \R^{p} \).
    \item condition (\ref{Btvtd}) holds with \( \nu = \nu_{1}^{p} \).
\end{enumerate}
\end{lemma}
\begin{proof}
Condition \( (H) \) implies that for any \( \td \in \Tv \)
and \( \tv \in \B(\reps,\td) \) that
\begin{eqnarray*}
    \nu_{1}^{-1} \gammav^{\T} \VV^{2}(\td) \gammav
    \le
    \int_{0}^{1} \gammav^{\T} \VV^{2}(\td + t \normc \gammav) \gammav \, dt
    \le
    \nu_{1} \gammav^{\T} \VV^{2}(\td) \gammav
\end{eqnarray*}
with \( \normc = \| \tv-\td \| \) and \( \gammav = (\tv-\td)/\normc \),
which yields the first assertion of the lemma.

The Lebesgue measure of the ellipsoid
\( \BB(\reps,\tv) \) is equal to \( \omega_{p} \reps^{p} \big/ {\det(\VV(\tv))} \).
This and (\ref{BBBnu}) imply the second assertion.
This, in turns, implies (\ref{Btvtd}) in view of \( (H) \).
\end{proof}

The next result claims that in the smooth case the local entropy number
\( \entrl(\reps,\td) \) is similar to the usual Euclidean situation.

\begin{lemma}
\label{Lentrls}
Assume \( (H) \). Then
\(
    \sup_{\tv\in \Theta} \entrl(\reps,\tv) \le \entr_{p} + p  \log (\nu_{1})
    \).
\end{lemma}

\begin{proof}
Fix any \( \td \in \Tv \).
Linear transformation with the matrix \( \VV^{-1}(\td) \) reduces the situation
to the case when \( \VV(\td) \equiv I \) and \( \BB(\repsc,\td) \)
is a usual Euclidean ball for any \( \repsc \le \reps \).
Moreover, by \( (H) \), each elliptic set \( \BB(\repsc,\tv) \) for
\( \tv \in \B(\reps,\td) \) is nearly an Euclidean ball
in the sense that the ratio of
its largest and smallest axes (which is the ratio of
the largest and smallest eigenvalues of
\( \VV^{-1}(\td) \VV^{2}(\tv) \VV^{-1}(\td) \)) is bounded by \( \nu_{1} \).
Therefore, for any \( \repsc \le \reps \), a Euclidean net
\( \cc{D}^{e}(\repsc/\nu_{1}) \) with the step
\( \repsc / \nu_{1} \)
ensures a covering of \( \B(\reps,\td) \) by the sets
\( \B(\repsc,\td) \), \( \td \in \cc{D}^{e}(\repsc/\nu_{1}) \).
Therefore, the corresponding covering number is
bounded by \( (\nu_{1}\reps/\repsc)^{p} \)
yielding the claimed bound for the local entropy.
\end{proof}

Now the result of theorem~\ref{Tsmoothpen} is reduced to the statement of
Theorem~\ref{Tmainboundexp2}.
\end{proof}

Computing of the penalty simplifies a lot when the matrix \( \VV(\tv) \) is
uniformly bounded by a matrix \( \VV^{*} \), or, equivalently, 
condition \( (H) \) is fulfilled for \( \VV(\tv) \equiv \VV^{*} \). 
Then one can define \( \pen(\tv) \) as a function of the norm
\( \| \VV^{*} (\tv - \tvs) \| \) for a fixed 
\( \tvs \).

\begin{theorem}
\label{Tqrating}
Assume additionally to the conditions of Theorem~\ref{Tsmoothpen} that
\( \VV(\tv) \le \VV^{*} \) for a symmetric matrix \( \VV^{*} \). 
Suppose that \( \penb(t) \) is a monotonously decreasing positive 
function on \( [0,+\infty ) \) satisfying 
\begin{eqnarray}
\label{penInt}
    \penInt^{*}
    \eqdef
    \omega_{p}^{-1} \int_{\R^{p}} \penb(\| \uv \|) d\uv 
    = 
    p \int_{0}^{\infty}  \penb(t) t^{p-1} dt
    < \infty .
\end{eqnarray}
Define 
\begin{eqnarray*}
    \pen(\tv) 
    = 
    - \mrho^{-1} \log \penb\bigl( \reps^{-1} \| \VV^{*} (\tv - \tvs) \| + 1 \bigr)
%    \bb{1}(\| \VV^{*} (\tv - \tvs) \| \ge 1).
\end{eqnarray*}    
%for \( \| \VV^{*} (\tv - \tvs) \| \ge 1 \). 
Then
\begin{comment}
\begin{eqnarray*}
    \penInt^{*}_{1}(\mrho)
    \eqdef
    \log \biggl( 
%        \omega_{p}^{-1}
        p \int_{0}^{\infty } \exp\bigl\{ - \mrho \penb(\rr - \reps) \bigr\} 
        \rr^{p-1} d\rr 
    \biggr)
    < \infty .
\end{eqnarray*}    
If
%\begin{eqnarray*}
\(     \pen(\tv) 
    = 
    \penb(\ranking(\tv))
    =
    \penb\bigl( \| \VV^{*} (\tv - \tvs) \| \bigr)
\),
%\end{eqnarray*}
then 
\end{comment}
%
\begin{eqnarray}
\label{boundsmoothp}
    \E \exp\Bigl\{
        \sup_{\tv \in \Tv} \mrho \bigl[ \UU(\tv) - \pen(\tv) \bigr]
    \Bigr\} 
    \le 
    \Crlq(\mrho,\reps)
\end{eqnarray}
with
\begin{eqnarray*}
    \log \Crlq(\mrho,\reps)
    &=& 
    \frac{2 \reps^{2} \mrho^{2}}{1 - \mrho} 
    + (1 - \mrho) \entr_{p}
%    - p \log(\reps) 
    + \log (\penInt^{*}),
\end{eqnarray*}    
where \( \omega_{p} \) is the volume of the unit ball in \( \R^{p} \).

\end{theorem}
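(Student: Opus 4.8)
The plan is to deduce the statement from Theorem~\ref{Tsmoothpen} applied with the \emph{constant} matrix function $\VV(\tv) \equiv \VV^{*}$, so that the extra structure reduces all constants to the Euclidean ones. First I would check that the hypotheses of Theorem~\ref{Tsmoothpen} hold for this choice. Condition $(\cc{E})$ is assumed. Since $\VV^{*}$ does not depend on $\tv$, condition $(H)$ holds trivially with $\nu_{1} = 1$. For condition $(\cc{E}\! D)$ with $\VV^{*}$, I would use that the domination $\VV(\tv) \le \VV^{*}$ means $\| \VV(\tv) \gammav \| \le \| \VV^{*} \gammav \|$ for all $\gammav \in S^{p}$: writing $2 \lambda \, \gammav^{\T} \nabla \zeta(\tv) / \| \VV^{*} \gammav \| = 2 \lambda' \, \gammav^{\T} \nabla \zeta(\tv) / \| \VV(\tv) \gammav \|$ with $\lambda' = \lambda \, \| \VV(\tv) \gammav \| / \| \VV^{*} \gammav \|$ satisfying $|\lambda'| \le |\lambda| \le \lambdab$, condition $(\cc{E}\! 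D)$ for $\VV(\tv)$ bounds the corresponding log-moment by $2 (\lambda')^{2} \le 2 \lambda^{2}$. Hence $(\cc{E}\! D)$ holds for $\VV^{*}$ with the same $\lambdab$.

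With $\VV(\tv) \equiv \VV^{*}$ the semi-metric collapses to $\ExpL(\tv,\tc) = \| \VV^{*}(\tv - \tc) \|$, so each ball $\B(\reps,\td) = \{ \tv : \| \VV^{*}(\tv - \td) \| \le \reps \}$ is an ellipsoid and $\det(\VV(\td)) = \det(\VV^{*})$ in the penalty integral. It then remains to evaluate $\penH_{\reps}(\mrho)$ for the proposed penalty and to show it is bounded by $\log(\penInt^{*})$.

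The main step is the computation of $\pen_{\reps}(\td) = \inf_{\tv \in \B(\reps,\td)} \pen(\tv)$. Since $\penb$ is decreasing, $\pen(\tv)$ is a nondecreasing function of $\| \VV^{*}(\tv - \tvs) \|$, so the infimum over the ellipsoid $\B(\reps,\td)$ is attained at the point minimizing this norm, giving $\min_{\tv \in \B(\reps,\td)} \| \VV^{*}(\tv - \tvs) \| = (\| \VV^{*}(\td - \tvs) \| - \reps)_{+}$. Consequently
\begin{eqnarray*}
    \exp\bigl\{ - \mrho \pen_{\reps}(\td) \bigr\}
    =
    \penb\bigl( \reps^{-1} (\| \VV^{*}(\td - \tvs) \| - \reps)_{+} + 1 \bigr)
    \le
    \penb\bigl( \reps^{-1} \| \VV^{*}(\td - \tvs) \| \bigr),
\end{eqnarray*}
where the inequality uses monotonicity of $\penb$ together with $(s-1)_{+} + 1 = \max(s,1) \ge s$ for $s = \reps^{-1} \| \VV^{*}(\td - \tvs) \|$. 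The additive constant $1$ in the definition of $\pen(\tv)$ is precisely what absorbs the worst-case reduction of the rescaled norm over a ball of radius $\reps$; this is the delicate point of the argument, and without that shift the integral would not close up to $\penInt^{*}$.

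Finally, plugging this bound into the penalty integral and enlarging the domain from $\Tv$ to $\R^{p}$ (the integrand is nonnegative), I would substitute $\zv = \reps^{-1} \VV^{*}(\td - \tvs)$, so that $d\td = \reps^{p} \det(\VV^{*})^{-1} \, d\zv$ cancels the prefactor $\omega_{p}^{-1} \reps^{-p} \det(\VV^{*})$ down to $\omega_{p}^{-1}$, leaving $\omega_{p}^{-1} \int_{\R^{p}} \penb(\| \zv \|) \, d\zv = \penInt^{*}$. Thus $\penH_{\reps}(\mrho) \le \log(\penInt^{*}) < \infty$, and since $\nu_{1} = 1$ makes the term $p \log(\nu_{1})$ vanish, Theorem~\ref{Tsmoothpen} delivers (\ref{boundsmoothp}) with the asserted value of $\log \Crlq(\mrho,\reps)$.
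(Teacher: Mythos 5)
Your proof is correct, and its skeleton is the same as the paper's: invoke Theorem~\ref{Tsmoothpen}, lower-bound \( \pen_{\reps}(\td) \) via the triangle inequality (which is exactly what the additive shift \( +1 \) in the penalty definition is for), then pass to \( \penInt^{*} \) by the change of variables \( \zv = \reps^{-1}\VV^{*}(\td-\tvs) \) and enlargement of the integration domain to \( \R^{p} \). Where you genuinely differ --- and in fact improve on the paper --- is the setup. The paper's proof keeps the original matrix function \( \VV(\tv) \) and claims that \( \tv \in \B(\reps,\td) \) forces \( \|\VV^{*}(\td-\tv)\| \le \reps \). Under the hypothesis \( \VV(\tv) \le \VV^{*} \) this inclusion goes the wrong way: one only gets \( \ExpL(\tv,\td) \le \|\VV^{*}(\tv-\td)\| \), so the \( \ExpL \)-ball \emph{contains} the \( \VV^{*} \)-ellipsoid and may be strictly larger, and neither the definition of \( \ExpL \) nor condition \( (H) \) yields the claimed implication. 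You instead apply Theorem~\ref{Tsmoothpen} with the constant function \( \VV(\tv) \equiv \VV^{*} \), for which the semi-metric is exactly \( \|\VV^{*}(\tv-\tc)\| \) (so the problematic step becomes an identity) and \( (H) \) holds with \( \nu_{1}=1 \); moreover, you justify that \( (\cc{E}\! D) \) transfers from \( \VV(\tv) \) to \( \VV^{*} \) by the rescaling \( \lambda' = \lambda\|\VV(\tv)\gammav\|/\|\VV^{*}\gammav\| \le \lambda \), a point the paper leaves unaddressed both here and in the analogous reduction used to prove Theorem~\ref{Tmainboundranking}. The one caveat, which you share with the paper, is that this argument reads \( \VV(\tv) \le \VV^{*} \) as the ordering \( \gammav^{\T}\VV^{2}(\tv)\gammav \le \gammav^{\T}(\VV^{*})^{2}\gammav \) of the quadratic forms that actually enter \( (\cc{E}\! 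D) \) and \( \ExpL \); with that natural reading, every step of your argument is sound, and the remaining computation (the infimum \( (\|\VV^{*}(\td-\tvs)\|-\reps)_{+} \) over the ellipsoid, the bound \( \penb\bigl((s-1)_{+}+1\bigr) \le \penb(s) \), and the final integral) coincides with the paper's.
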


\begin{proof}
Let us fix \( \td \in \Tv \).
% with \( \reps^{-1} \| \VV^{*} (\td - \tvs) \| \ge 1 \).
Definition of the semi-metric \( \ExpL \) and condition \( (H) \) imply for every 
\( \tv \in \B(\reps,\td) \) that
\begin{eqnarray*}
    \| \VV^{*}(\td - \tv) \|
    \le 
    \reps .
\end{eqnarray*}    
The triangle inequality and \( (H) \) now imply  for this \( \tv \) that
\begin{eqnarray*}
    \reps^{-1} \| \VV^{*}(\tv - \tvs) \| + 1
    \ge 
%    \reps \{ \| \VV^{*}(\td - \tvs) \| - \reps \}
%    \ge 
    \reps^{-1} \| \VV^{*}(\td - \tvs) \| 
\end{eqnarray*}    
and \( \pen_{\reps}(\td) \ge - \mrho^{-1} \log \penb\bigl( \reps^{-1} \| \VV^{*} (\td - \tvs) \| \bigr) \).
Therefore, it follows by change of variables \( \uv = \reps \VV^{*}(\tv - \tvs) \) that
\begin{eqnarray*}
    \omega_{p}^{-1} \reps^{-p} \int_{\Tv} \det(\VV^{*})
        \exp\bigl\{ - \mrho \pen_{\reps}(\tv) \bigr\}  d \tv
    & \le &
    \omega_{p}^{-1} \int_{\R^{p}} 
        \penb(\| \uv \| ) d\uv 
    \nn 
    & \le &
    p \int_{0}^{\infty} \penb(t) t^{p-1} dt
    = 
    \penInt^{*},
\end{eqnarray*}
and the result follows from Theorem~\ref{Tsmoothpen}.
\end{proof}

Natural candidates for the function \( \penb(\cdot) \) 
and the corresponding \( \penInt^{*} \)-values are:
\begin{eqnarray*}
%\label{penInt12}
    \begin{array}{rclrcl}
    \penb_{1}(t)
    &=&
    e^{ - \delta_{1} (t-1)_{+}^{2}},
    &
    \penInt^{*}_{1}
    &=&
    1 + 
    \omega_{p}^{-1} (\pi /\delta_{1})^{p/2},    
    \\
    \penb_{2}(t) 
    &=& 
    \| 1 + t \|^{- p - \delta_{2}},
    &
    \penInt^{*}_{2}
    &=&
    p /\delta_{2} \, ,
\end{array}  
\end{eqnarray*}
where \( \delta_{1},\delta_{2} > 0 \) are some constants.
The result of Theorem~\ref{Tqrating} yields
\begin{corollary}
\label{Cqrating}
Under conditions of Theorem~\ref{Tqrating}, the bound (\ref{boundsmoothp}) 
holds with 
\begin{eqnarray*}
    \pen_{1}(\tv) 
    &=& 
    \mrho^{-1} \delta_{1} \, \reps^{-2} \| \VV^{*} (\tv - \tvs) \|^{2} ,
    \nn
    \log \Crlq_{2}(\mrho,\reps)
    &=& 
    \frac{2 \reps^{2} \mrho^{2}}{1 - \mrho} 
    + (1 - \mrho) \entr_{p}
    + \log (1 + \omega_{p}^{-1} |\pi/\delta_{1}|^{p/2}) .
    \nn
    \pen_{1}(\tv) 
    &=& 
    - \mrho^{-1} (p + \delta_{2}) 
    \log \bigl( \reps^{-1} \| \VV^{*} (\tv - \tvs) \| + 2 \bigr),
    \nn
    \log \Crlq_{1}(\mrho,\reps)
    &=& 
    \frac{2 \reps^{2} \mrho^{2}}{1 - \mrho} 
    + (1 - \mrho) \entr_{p}
    + \log (p / \delta_{2}) ,
\end{eqnarray*}    
\end{corollary}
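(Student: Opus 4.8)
The assertion is a direct specialization of Theorem~\ref{Tqrating}: each of the two displayed penalties arises from one explicit choice of the decreasing profile \( \penb(\cdot) \), and the corresponding \( \Crlq(\mrho,\reps) \) is obtained by inserting the associated constant \( \penInt^{*} \) into the bound of that theorem. So the plan is, for each profile, to (i) check the integrability requirement (\ref{penInt}), (ii) compute the penalty \( \pen(\tv) = - \mrho^{-1} \log \penb( \reps^{-1} \| \VV^{*}(\tv - \tvs) \| + 1 ) \) in closed form, and (iii) evaluate \( \penInt^{*} \); everything else is a verbatim application of Theorem~\ref{Tqrating}.

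For the quadratic profile \( \penb_{1}(t) = \ex^{-\delta_{1}(t-1)_{+}^{2}} \), step (ii) is immediate: since the argument \( t = \reps^{-1} \| \VV^{*}(\tv - \tvs) \| + 1 \) is always at least \( 1 \), the truncation \( (t-1)_{+} \) is inactive and \( \log \penb_{1}(t) = - \delta_{1} \reps^{-2} \| \VV^{*}(\tv - \tvs) \|^{2} \), whence \( \pen_{1}(\tv) = \mrho^{-1} \delta_{1} \reps^{-2} \| \VV^{*}(\tv - \tvs) \|^{2} \), a genuine quadratic form. For step (iii) I would use the radial representation \( \penInt^{*}_{1} = p \int_{0}^{\infty} \ex^{-\delta_{1}(t-1)_{+}^{2}} t^{p-1} \, dt \) from (\ref{penInt}) and split the integral at \( t = 1 \): on \( [0,1] \) the integrand equals \( t^{p-1} \) and contributes exactly \( 1 \), while the remaining tail is estimated by comparison with the centered Gaussian integral \( \omega_{p}^{-1} \int_{\R^{p}} \ex^{-\delta_{1} \| \uv \|^{2}} d\uv = \omega_{p}^{-1}(\pi/\delta_{1})^{p/2} \). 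This produces \( \penInt^{*}_{1} = 1 + \omega_{p}^{-1}(\pi/\delta_{1})^{p/2} \) and, through Theorem~\ref{Tqrating}, the stated value \( \log \Crlq = 2\reps^{2}\mrho^{2}/(1-\mrho) + (1-\mrho)\entr_{p} + \log(1 + \omega_{p}^{-1}|\pi/\delta_{1}|^{p/2}) \).

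For the polynomial profile \( \penb_{2}(t) = (1+t)^{-p-\delta_{2}} \), step (ii) gives \( 1 + t = \reps^{-1}\| \VV^{*}(\tv - \tvs) \| + 2 \) inside the logarithm, so that \( \pen_{2}(\tv) = \mrho^{-1}(p+\delta_{2}) \log( \reps^{-1} \| \VV^{*}(\tv - \tvs) \| + 2 ) \), which is the claimed logarithmic penalty. Here step (iii) is clean: writing \( \penInt^{*}_{2} = p \int_{0}^{\infty} (1+t)^{-p-\delta_{2}} t^{p-1} \, dt \) and using the monotone bound \( t^{p-1} \le (1+t)^{p-1} \) reduces the integrand to \( (1+t)^{-1-\delta_{2}} \), whose integral over \( [0,\infty) \) equals \( 1/\delta_{2} \); hence \( \penInt^{*}_{2} \le p/\delta_{2} < \infty \), so (\ref{penInt}) holds and Theorem~\ref{Tqrating} delivers \( \log \Crlq = 2\reps^{2}\mrho^{2}/(1-\mrho) + (1-\mrho)\entr_{p} + \log(p/\delta_{2}) \).

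I expect the only non-mechanical point to be the evaluation of \( \penInt^{*}_{1} \): the positive-part cutoff turns the tail into a \emph{shifted} Gaussian, and estimating \( \int_{1}^{\infty} \ex^{-\delta_{1}(t-1)^{2}} t^{p-1} \, dt \) against the centered Gaussian moment requires handling the factor \( t^{p-1} = (1+(t-1))^{p-1} \) with some care, so that one should regard the displayed \( \penInt^{*}_{1} \) as the leading (essentially sharp) term rather than a trivially exact identity. The polynomial case, by contrast, is entirely elementary once \( t^{p-1} \le (1+t)^{p-1} \) is invoked, and both penalty formulas are mere substitutions into the definition of \( \pen \).
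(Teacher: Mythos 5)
Your overall route is the same as the paper's: the paper gives no proof of Corollary~\ref{Cqrating} beyond the phrase ``The result of Theorem~\ref{Tqrating} yields'', i.e.\ it is exactly the substitution argument you describe. Your treatment of the polynomial profile is complete and correct: \( t^{p-1} \le (1+t)^{p-1} \) reduces the integrand to \( (1+t)^{-1-\delta_{2}} \), giving \( \penInt^{*}_{2} \le p/\delta_{2} \), and the penalty formula follows by direct substitution (your positive sign is the right one; the minus sign in the printed statement, and the swapped subscripts on \( \pen \) and \( \Crlq \), are typos of the paper).

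The genuine gap is in the Gaussian half, and you sensed it but did not close it --- indeed it cannot be closed as you propose, because the comparison with the centered Gaussian integral goes the \emph{wrong} way. After splitting at \( t = 1 \) and substituting \( s = t - 1 \), the tail of (\ref{penInt}) is
\begin{eqnarray*}
    p \int_{1}^{\infty} \ex^{-\delta_{1}(t-1)^{2}}\, t^{p-1}\, dt
    &=&
    p \int_{0}^{\infty} \ex^{-\delta_{1}s^{2}}\, (1+s)^{p-1}\, ds
    \\
    & \ge &
    p \int_{0}^{\infty} \ex^{-\delta_{1}s^{2}}\, s^{p-1}\, ds
    \;=\;
    \omega_{p}^{-1} (\pi/\delta_{1})^{p/2},
\end{eqnarray*}
with strict inequality for \( p \ge 2 \) (equality only in dimension \( p = 1 \)). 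Hence \( \penInt^{*}_{1} \) is \emph{strictly larger} than \( 1 + \omega_{p}^{-1}(\pi/\delta_{1})^{p/2} \): the displayed value is an underestimate, not an upper bound, and since Theorem~\ref{Tqrating} delivers \( \log \Crlq(\mrho,\reps) = 2\reps^{2}\mrho^{2}/(1-\mrho) + (1-\mrho)\entr_{p} + \log(\penInt^{*}) \), your argument does not establish the stated constant. (The same inexactness is present in the paper itself, in (\ref{penint12}) and in the proof of Theorem~\ref{CLDboundMT2}, so this is an inherited error rather than one you introduced.) What your method does prove: the integrability requirement (\ref{penInt}) holds, since \( (1+s)^{p-1} \le 2^{p-1}(1 + s^{p-1}) \) gives the finite bound \( \penInt^{*}_{1} \le 1 + 2^{p-1}\bigl( (p/2)\sqrt{\pi/\delta_{1}} + \omega_{p}^{-1}(\pi/\delta_{1})^{p/2} \bigr) \), so Theorem~\ref{Tqrating} does apply to \( \penb_{1} \) and yields (\ref{boundsmoothp}) with \( \log(\penInt^{*}_{1}) \) replaced by the logarithm of this quantity (or of the exact integral \( 1 + p\int_{0}^{\infty} \ex^{-\delta_{1}s^{2}}(1+s)^{p-1}ds \)). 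Your closing suggestion to read the printed constant as an ``essentially sharp leading term'' is not a repair: the discrepancy is a dimension-dependent multiplicative factor, so either the constant in the corollary must be enlarged as above, or the statement must be rewritten with the true value of \( \penInt^{*}_{1} \).
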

Sometimes it is useful to combine the functions \( \penb_{1}(\cdot) \) and
\( \penb_{2}(\cdot) \) in the form
\begin{eqnarray}
\label{penbtr}
    \penb(t) = \penb_{1}(t) \bb{1}(t \ge r) + \penb_{2}(t) \bb{1}(t \le r)
\end{eqnarray}    
for a properly selected \( r \) which still ensures (\ref{penInt}) with 
\begin{eqnarray*}
%\label{penInt12}
    \penInt^{*} 
    \le
    \omega_{p}^{-1} |\pi/\delta_{1}|^{p/2} 
    +
    p r^{-\delta_{2}} /\delta_{2}  .
\end{eqnarray*}

\begin{comment}

\section{Proofs}
\label{Sproofs}
This section collects the proofs of the main assertions of the paper.

\subsection{Proof of Theorem~\ref{Txibound}}

\subsection{Proof of Theorem~\ref{TmainboundG}}

\subsection{Proof of Theorem~\ref{Tmainboundexp2}}

\subsection{Proof of Theorem~\ref{Tsmoothpen}}
\end{comment}

%\begin{comment}
%\input {supexp_lli}
%\end{comment}

%\input{cross_2009_1}

%\input{glm_cp_2009_1}
%\input{supexp_llil}

\bibliography{exp_ts,listpubm-with-url}
\end{document}

\section*{Conclusion and outlook}
The paper presents an approach for studying the non-asymptotic behavior of a
general quasi MLE. The established exponential bounds yield strict
concentration properties as well as  polynomial
risk bounds for this estimate under very mild and general conditions.
The results can be applied for many
specific statistical problems like building some confidence sets of testing.

It is worth noting that the bound are quite rough because of the basic
tools: an exponential Markow inequality. However, the concentration
property of the estimate \( \tilde{\thetav} \) combined with the standard
local approximation technique can be used to obtain the classical risk bounds.

An important issue is the optimality of the suggested penalty function
\( \pen(\thetav) \). In some problems like multiscale estimation and testing
this function enters in the upper bound for the risk and its careful evaluation
remains a challenging question.

An extension of the proposed approach to the problem of semiparametric
estimation is an important issue as well. As a specific example, the estimation
of the index vector for an unknown link function can be mentioned.